\newcommand{\dtwist}[2]{T^{#1}_{#2}} 
\newcommand{\setbuilder}[2]{\{#1 : #2\}}
\newcommand{\enum}[1]{\{#1\}}
\newcommand{\ints}{\mathbb{Z}}
\newcommand{\irange}[3]{#1 \leq #2 \leq #3}
\newcommand{\functionheader}[3]{#1 : #2 \to #3}
\theoremstyle{definition}
\newtheorem{theorem}{Theorem}[section]
\newtheorem{proposition}[theorem]{Proposition}
\newtheorem{lemma}[theorem]{Lemma}
\newtheorem{sublemma}[theorem]{Sublemma}
\newtheorem{corollary}[theorem]{Corollary}
\newtheorem{remark}[theorem]{Remark}
\newtheorem{definition}[theorem]{Definition}
\newtheorem{fact}[theorem]{Fact}
\newtheorem{cons}[theorem]{Construction}
\newtheorem{notation}[theorem]{Notation}
\begin{document}

\title[Connectivity in low and medium complexity]{Optimal connectivity results for spheres in the curve graph of low and medium complexity surfaces}
\maketitle{}

%title brainstorming: 

%optimal connectivity results for spheres in the curve graph

%optimal connectivity results for spheres in the curve graph of low and medium complexity surfaces

%connectivity of spheres in the curve gaphs of low and medium complexity surfaces 

\author{Helena Heinonen},
\author{Roshan Klein-Seetharaman},
\author{Minghan Sun}

\begin{abstract}
    Answering a question of Wright, we show that spheres of any radius are always connected in the curve graph of surfaces $\Sigma_{2,0}, \Sigma_{1,3},$ and $\Sigma_{0,6}$, and the union of two consecutive spheres is always connected for $\Sigma_{0, 5}$ and $\Sigma_{1,2}$. We also classify the connected components of spheres of radius 2 in the curve graph of $\Sigma_{0, 5}$ and $\Sigma_{1,2}$.  
\end{abstract}

%%%%%%%%%%% Overall Introduction %%%%%%%%%%%
%%%%%%%%%%% Overall Introduction %%%%%%%%%%%
%%%%%%%%%%% Overall Introduction %%%%%%%%%%%
%%%%%%%%%%% Overall Introduction %%%%%%%%%%%
%%%%%%%%%%% Overall Introduction %%%%%%%%%%%
%%%%%%%%%%% Overall Introduction %%%%%%%%%%%
%%%%%%%%%%% Overall Introduction %%%%%%%%%%%
%%%%%%%%%%% Overall Introduction %%%%%%%%%%%

\section{Introduction}%\todo{A Jul 23: One possible title is ``Spheres in the curve graphs of low and medium complexity surfaces". Or ``Optimal connectivity results for  spheres in [atypical?] curve graphs". Everyone should brainstsorm ideas.}

\subsection{Main results}

Let $\Sigma = \Sigma_{g,n}$ be a connected surface with genus $g$ and $n$ punctures. We define the complexity of $\Sigma$ to be $\xi(\Sigma) = 3g - 3 + n$. We say $\Sigma$ is 
\begin{itemize}
\setlength\itemsep{0.15em}
\item \emph{exceptional} if $\xi(\Sigma)=1$, i.e. $(g,n)\in \{(1,1),(0,4)\}$,
\item \emph{low complexity} if $\xi(\Sigma)=2$, i.e. $(g,n)\in \{(1,2),(0,5)\}$, 
\item \emph{medium complexity} if $\xi(\Sigma)=3$, i.e. $(g,n)\in \{(2,0), (1,3), (0,6)\}$, 
\item \emph{high complexity} if $\xi(\Sigma)\geq 4$. 
\end{itemize}

Let $ \mathcal{C}\Sigma$ be the curve graph of $\Sigma$. For any vertex $c \in  \mathcal{C}\Sigma$ and radius $r$, let $$S_r = S_r(c) = \{a \in \mathcal{C}\Sigma: d(a, c) = r\}$$ be the sphere of radius $r$ about $c$ in $ \mathcal{C}\Sigma$. We will say that a sphere is connected if the induced subgraph is connected. 

The main results to be proved in this paper are as follows:
\begin{theorem}\label{mainPropLowComp}
    Let $\Sigma_{g,n}$ be low complexity. Fix a center $c \in \mathcal{C}\Sigma$. Then for all $r > 0$ we have that $S_r(c) \cup S_{r+1}(c)$ is connected. 
\end{theorem}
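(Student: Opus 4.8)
The plan is to reduce the theorem to a local surgery near a single curve of $S_{r-1}(c)$. Balls $B_r(c)$ in $\mathcal{C}\Sigma$ are connected (every vertex is joined to $c$ by a geodesic staying inside $B_r(c)$), and every vertex of $S_{r+1}(c)$ is adjacent to a vertex of $S_r(c)$ (the penultimate vertex of a geodesic to $c$); hence it is enough to join two arbitrary curves $a,a'\in S_r(c)$ by a path inside $S_r(c)\cup S_{r+1}(c)$. I would argue by induction on $r$: for $r\ge 2$ the radius-$(r-1)$ case of the theorem provides a path from $a$ to $a'$ inside $S_{r-1}(c)\cup S_r(c)$, and it then remains to replace each maximal sub-path dipping into $S_{r-1}(c)$ by one inside $S_r(c)\cup S_{r+1}(c)$ with the same endpoints on $S_r(c)$. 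Chaining finitely many such moves, the crux is the model situation of a single curve $b\in S_{r-1}(c)$ together with two curves $a,a'$ disjoint from $b$ with $d(c,a)=d(c,a')=r$; this also subsumes the base case $r=1$, taking $b=c$.

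\textbf{Local structure.} Since $\Sigma$ is low complexity, the piece $Y$ of $\Sigma\setminus b$ that carries curves is a one-holed torus or a four-holed sphere, so $\mathcal{C}Y$ is a Farey graph; moreover a curve is disjoint from $b$ precisely when it is carried by $Y$, and then it lies in $S_{r-2}(c)\cup S_{r-1}(c)\cup S_r(c)$. Thus $a$ and $a'$ are vertices of this Farey graph $\mathcal{C}Y$. Two distinct curves carried by $Y$ always intersect, and when they are Farey-adjacent in $\mathcal{C}Y$ the \emph{only} curve of $\mathcal{C}\Sigma$ disjoint from both of them is $b$. Hence the obvious length-two path $a,b,a'$ is barred, because $b\notin S_r(c)\cup S_{r+1}(c)$; and for Farey-adjacent curves any admissible path has length at least three and must pass through curves that \emph{cross} $b$ --- curves which, not being trapped in $Y$, are free to lie on $S_r(c)$ or on $S_{r+1}(c)$.

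\textbf{The surgery.} The plan is to fix a Farey geodesic $a=y_0,y_1,\dots,y_m=a'$ in $\mathcal{C}Y$ and promote it to a $\mathcal{C}\Sigma$-path inside $S_r(c)\cup S_{r+1}(c)$: keep each $y_i$ already on $S_r(c)$; replace a $y_i$ that has dipped into $S_{r-1}(c)$ or $S_{r-2}(c)$ by a curve of $S_r(c)\cup S_{r+1}(c)$ lying in the same Farey direction; and, because consecutive vertices of a Farey geodesic intersect, bridge successive kept-or-replaced curves $u,u'$ by a pair $z,z'$ with $z$ disjoint from $u$, with $z'$ disjoint from $u'$, and with $z$ disjoint from $z'$, producing the segment $u,z,z',u'$, where $z$ and $z'$ cross $b$ and lie on $S_r(c)\cup S_{r+1}(c)$. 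The very reason the statement permits $S_{r+1}(c)$, rather than $S_r(c)$ alone, is that these bridging and replacement curves become available one sphere out exactly when no choice on $S_r(c)$ works.

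\textbf{Main obstacle.} The heart of the argument is making this surgery go through: exhibiting the crossing curves with the \emph{correct} distance to $c$ --- never slipping back into $S_{r-1}(c)$, never escaping beyond $S_{r+1}(c)$ --- while keeping them disjoint from the prescribed neighbours, and checking that the assembled path never drops below $S_r(c)$. The most delicate sub-case is when the Farey geodesic in $\mathcal{C}Y$ plunges all the way down to $S_{r-2}(c)$, so that the curves flanking a dip sit two spheres deep inside $B_r(c)$ and a genuine climb back out is needed. I expect this to demand an explicit analysis, carried out separately for $\Sigma_{0,5}$ and $\Sigma_{1,2}$ --- leaning on the change-of-coordinates principle and the short list of topological types of curves on these surfaces --- and I expect this concrete surgery, with its distance bookkeeping, to be the technical core of the proof.
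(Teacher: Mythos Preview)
Your outline is not a proof: the ``surgery'' paragraph states what must be done and then, in the ``main obstacle'' paragraph, you concede that you have not done it. Producing bridging curves $z,z'$ that cross $b$, are disjoint from prescribed neighbours, and lie precisely in $S_r(c)\cup S_{r+1}(c)$ is the entire content of the theorem; an ad hoc topological case analysis in $\Sigma_{0,5}$ and $\Sigma_{1,2}$ gives no mechanism to force $d(z,c)\ge r$, and your proposal offers none. Indeed, when you say the Farey geodesic in $\mathcal{C}Y$ may ``plunge all the way down to $S_{r-2}(c)$'' you have identified exactly why a bare-hands construction is unlikely to succeed: nothing in the local picture near $b$ sees the global distance to $c$.

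The paper's argument is organised quite differently and supplies the missing mechanism. Rather than inducting on $r$ and pushing a path \emph{up} from $S_{r-1}\cup S_r$, it verifies directly the hypotheses of a general connectivity criterion (\cref{suffCondConnSpheres}), which asks for paths going \emph{outward} into $S_{r+1}\cup S_{r+2}$. The key device for controlling distance to $c$ is not case analysis but the Bounded Geodesic Image Theorem combined with high powers of the Dehn twist $T_a$ about a curve $a\in S_r$: for any curve $y$ with $d(y,a)\ge 2$, taking $N$ large makes $d_a(T_a^N y,c)\gg M$, so every geodesic from $T_a^N y$ to $c$ must pass through $B_1(a)$, forcing $d(T_a^N y,c)\ge r$. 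One first builds a preliminary path near $a$ with mild control (\cref{modifiedPath}), then applies $T_a^N$ to the whole path; the BGI argument pins down which vertices can land on $S_r$ and shows they are forward-facing, after which pentagon lemmas finish the job. This twist-plus-BGI package is the idea your sketch is missing, and without it the distance bookkeeping you flag as the ``technical core'' has no handle.
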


\begin{theorem}\label{mainPropMedComp}
        Let $\Sigma_{g,n}$ be medium complexity. Fix a center $c \in \mathcal{C}\Sigma$. Then for all $r > 0$ we have that $S_r(c)$ is connected. 
\end{theorem}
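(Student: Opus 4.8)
The plan is to reduce the medium-complexity case to a local statement about how curves at distance $r$ from $c$ can be connected through curves that are also at distance $r$. The natural strategy is induction on $r$. For $r=1$, the sphere $S_1(c)$ consists of all curves disjoint from $c$; this is (a subgraph of) the curve graph of the complement $\Sigma \setminus c$, and connectivity of $S_1(c)$ should follow from connectivity of curve graphs of the possibly-disconnected pieces together with the fact that a cut system relating any two such curves can be taken to avoid $c$. The heart of the matter is the inductive step: given $a, b \in S_r(c)$ with $r \geq 2$, we want a path in $S_r(c)$ joining them. I would pick geodesics $c = a_0, a_1, \dots, a_r = a$ and $c = b_0, b_1, \dots, b_r = b$, note that $a_{r-1}, b_{r-1} \in S_{r-1}(c)$, and use the inductive hypothesis to join $a_{r-1}$ to $b_{r-1}$ inside $S_{r-1}(c)$. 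Then it suffices to prove the following \emph{move} can always be realized: if $x, y \in S_{r-1}(c)$ are adjacent (disjoint), and $x', y' \in S_r(c)$ with $x'$ adjacent to $x$ and $y'$ adjacent to $y$, then $x'$ and $y'$ can be connected within $S_r(c)$; and similarly if $x', x'' \in S_r(c)$ are both adjacent to the same $x \in S_{r-1}(c)$ they can be connected within $S_r(c)$.

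To handle this move, the key geometric input is a \emph{link} or \emph{projection} argument. Given $x \in S_{r-1}(c)$, the curves in $S_r(c)$ adjacent to $x$ are curves disjoint from $x$ that are pushed to distance $r$; these live in the curve graph of $\Sigma \setminus x$. Because $\xi(\Sigma) = 3$, the complement $\Sigma \setminus x$ has complexity $2$ (if $x$ is nonseparating or a certain separating type) or splits into lower-complexity pieces, and in all cases we have very explicit control — in the worst case we are looking at curve graphs of $\Sigma_{0,5}, \Sigma_{1,2}, \Sigma_{0,4},$ or $\Sigma_{1,1}$, whose combinatorics are classical (Farey graph for the exceptional ones, and Theorem~\ref{mainPropLowComp} for the low-complexity ones). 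The subtlety is that we must stay at distance exactly $r$ from $c$: a path in $\mathcal{C}(\Sigma \setminus x)$ connecting two curves of $S_r(c)$ might dip down to $S_{r-1}(c)$ or push out to $S_{r+1}(c)$. I would control this by a careful case analysis on the distance from $c$ of curves disjoint from $x$, using the fact that all such curves are within distance $1$ of $x$ hence within distance $r$ of $c$, so the only danger is dipping to $S_{r-2}(c) \cup S_{r-1}(c)$, and a curve of $S_{r-1}(c)$ disjoint from $x \in S_{r-1}(c)$ can be rerouted. This is exactly where the low-complexity theorem gets used as a black box for the complementary subsurface.

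The main obstacle I anticipate is precisely this distance-control bookkeeping in the inductive step: ensuring that the connecting path supplied by the link's curve graph does not leave $S_r(c)$, and handling the finitely many topological types of the complement $\Sigma \setminus x$ (separating vs. nonseparating, and the various ways a separating curve can cut $\Sigma_{2,0}$, $\Sigma_{1,3}$, or $\Sigma_{0,6}$). A secondary obstacle is the base case $r=1$ when $\Sigma \setminus c$ is disconnected or has a small piece, where "connected" must be interpreted correctly — but since we require $r > 0$ and $\xi = 3$, $S_1(c)$ is nonempty and its connectivity is a direct check. I would organize the write-up as: (1) a lemma on $\mathcal{C}(\Sigma \setminus x)$ for each relevant $x$; (2) the base case; (3) the inductive move lemma with its distance estimates; (4) assembly. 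I expect steps (1) and (3) to carry essentially all of the content, with (3)'s distance estimates being the genuinely delicate part.
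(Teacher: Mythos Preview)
Your high-level strategy---pass to the link of a vertex $x$ and invoke the low-complexity theorem on the complementary subsurface---is exactly the paper's strategy. But the decisive step is left as a black box. You write that a path in $\mathcal{C}(\Sigma\setminus x)$ ``might dip down'' and that this will be controlled by ``careful case analysis on the distance from $c$ of curves disjoint from $x$''; that is not a mechanism, it is a restatement of the problem. There is no elementary way to read off $d_{\mathcal{C}\Sigma}(a,c)$ from the position of $a$ inside $\mathcal{C}(\Sigma\setminus x)$, and in particular there is no obvious center in $\mathcal{C}(\Sigma\setminus x)$ around which to take spheres so that \cref{mainPropLowComp} says anything about $d_{\mathcal{C}\Sigma}(\cdot,c)$.

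The paper supplies this missing mechanism via subsurface projection and the Bounded Geodesic Image Theorem (\cref{BGI}). For $z\in S_r^c$ essentially non-separating with $U$ the non-pants component of $\Sigma\setminus z$, any curve $a\subset U$ with $d_U(a,c)>M$ is forced by BGI to have every geodesic to $c$ pass through $z$, hence $a\in S_{r+1}$. Thus ``stay in $S_{r+1}$'' is translated into ``stay outside the $M$-ball about $\rho_U(c)$ in $\mathcal{C}U$'', and \emph{now} \cref{mainPropLowComp} applies with center a point of $\rho_U(c)$. This is \cref{connectTwoPointsInOz}, and it is precisely the ingredient your outline lacks.

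Two structural differences are also worth noting. First, the paper does not induct on $r$; it starts from Wright's connectivity of $S_r^c\cup S_{r+1}^c$ (\cref{ENSunionConnected}) and pushes each $S_r^c$ vertex up, which sidesteps your two-vertex ``move'' (adjacent $x,y\in S_{r-1}$ with separate lifts $x',y'$) altogether. Second, the restriction to essentially non-separating $z$ guarantees a \emph{unique} non-pants component $U$, so that BGI forces geodesics through the single curve $z$; your case analysis over topological types of $\Sigma\setminus x$ would have to treat separately the cases where the complement has two curve-supporting pieces, and nothing in your outline indicates how.
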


In the low complexity case, we do not understand in general the connected components of $S_r$. However, we can understand the case of $S_2$.  
\begin{definition}\label{defn:nonisolated_vertices}
    Let $\Sigma_{g,n}$ be low complexity. Fix center $c \in \mathcal{C}\Sigma$. Let $S_{r} ^\prime  (c)$ denote the vertices in $S_r (c)$ which are not isolated in $S_r (c) $. 
\end{definition}

\begin{theorem}\label{thm:S2_connected}  
Let $\Sigma_{g,n}$ be low complexity. Fix center $c \in \mathcal{C}\Sigma$. Then $S ^\prime _{2} (c) $ is connected.  
\end{theorem}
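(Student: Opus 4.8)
The plan is to work concretely in each of the two low-complexity surfaces $\Sigma_{1,2}$ and $\Sigma_{0,5}$, using the well-understood combinatorics of their curve graphs. For $\Sigma_{0,5}$ the curve graph is the Petersen-like graph structure coming from the fact that curves correspond to partitions of the five punctures into a pair and a triple, with disjointness governed by compatible partitions; for $\Sigma_{1,2}$ there is an analogous finite-type description, and in both cases two curves are at distance $2$ exactly when they are not disjoint but have a common disjoint neighbor. The first step is to fix the center $c$ and give an explicit parametrization of $S_1(c)$ (the curves disjoint from $c$) and of $S_2(c)$ (the curves meeting $c$ that are disjoint from some curve disjoint from $c$). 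I would then isolate the "isolation" condition: a vertex $a\in S_2(c)$ is isolated in $S_2$ precisely when every $S_2$-curve disjoint from $a$ actually fails to be in $S_2$, i.e. its $S_2$-neighborhood is empty — I would translate this into an intersection-pattern condition with $c$ and characterize exactly which $a$ are isolated.

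The core of the argument is then a connectivity statement for $S'_2(c)$, the non-isolated part. The strategy I would use is a "surgery / path-pushing" argument: given two non-isolated vertices $a, a' \in S'_2(c)$, I want to connect them by a path staying in $S_2$ and avoiding isolated vertices. Since both $a$ and $a'$ are non-isolated, each has a neighbor in $S_2$; and every curve in $S_2$ is disjoint from some curve $b \in S_1(c)$. I would leverage the connectivity of $S_1(c)$ itself (or of $S_1 \cup S_2$, which is Theorem \ref{mainPropLowComp}) to first route a path in $S_1 \cup S_2$ from $a$ to $a'$, and then show that any excursion of this path into $S_1$ can be replaced by a detour through $S'_2$. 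Concretely, if the path passes through $b \in S_1(c)$, I would replace $b$ by a curve $a''\in S_2$ that is disjoint from both the $S_2$-curve entering $b$ and the one leaving it; the existence of such an $a''$ is where the low-complexity classification of curves does the work, since in $\Sigma_{1,2}$ and $\Sigma_{0,5}$ one can explicitly exhibit the needed curve by a surgery on $b$ that increases intersection number with $c$ from $0$ to a positive value while remaining in $S_2$.

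I expect the main obstacle to be the bookkeeping around isolated vertices: it is not enough to find \emph{some} path in $S_2$, because the detour construction might pass through an isolated vertex, which by definition has no $S_2$-neighbor and hence cannot lie on any path in the induced subgraph of length $>0$. So the delicate part is to show that the surgery can always be performed to land in $S'_2$ rather than merely in $S_2$ — equivalently, that the isolated vertices of $S_2$ are "sparse" enough that they can always be routed around. I anticipate handling this by a case analysis on the intersection number $i(a,c)$ and the topological type of $a$ relative to $c$ (for instance, in $\Sigma_{0,5}$, whether $a$ and $c$ share a puncture-pair or are in "generic position"), reducing to a finite check in each surface. A secondary, more routine obstacle is simply enumerating the curves and their adjacencies carefully enough in $\Sigma_{1,2}$, where the presence of genus makes the picture slightly less symmetric than in $\Sigma_{0,5}$; here I would use the hyperelliptic involution to cut down the number of cases.
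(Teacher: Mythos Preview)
Your proposal has a fundamental gap in the ``surgery / path-pushing'' step. You propose: given a path in $S_1(c)\cup S_2(c)$ that passes through some $b\in S_1(c)$ with neighbours $a_1,a_2\in S_2(c)$, replace $b$ by a single curve $a''\in S_2(c)$ disjoint from both $a_1$ and $a_2$. But in $\mathcal{C}\Sigma_{0,5}$ there are no $3$- or $4$-cycles, so any such $a''$ would create the $4$-cycle $b,a_1,a'',a_2$, which is impossible. Thus the single-vertex replacement never exists, and you give no mechanism for producing a longer detour in $S'_2(c)$ between $a_1$ and $a_2$. Connecting two $S_2$-vertices lying over the same $b\in S_1(c)$ is in fact the heart of the problem, not a routine surgery; your proposal reduces the theorem to itself.

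There are also smaller misconceptions. The curve graph of $\Sigma_{0,5}$ is not a finite ``Petersen-like'' object: curves do determine a $2$-element subset of the punctures, but infinitely many isotopy classes realise each subset, and the graph has infinite diameter. Also $\mathcal{C}\Sigma_{1,2}\cong\mathcal{C}\Sigma_{0,5}$, so no separate analysis is needed. Finally, $S_1(c)$ is totally disconnected in $\mathcal{C}\Sigma$ (no triangles), so ``connectivity of $S_1(c)$'' is not available; only its Farey structure is.

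The paper's route is quite different and supplies exactly the missing idea. One first shows that $v\in S_2(c)$ is non-isolated if and only if $i(v,c)=2$, so that $S'_2(c)=\bigsqcup_{x\in S_1(c)} E_x$ where $E_x$ is the set of Farey neighbours of $c$ inside the Farey graph $S_1(x)$. This exhibits $\beta:S'_2(c)\to S_1(c)$ as a $\mathbb{Z}$-bundle, with the half-twist $\tau_c$ acting simply transitively on each fibre. For Farey-adjacent $x_1,x_2\in S_1(c)$ the edges of $\mathcal{C}\Sigma$ between $E_{x_1}$ and $E_{x_2}$ form a $\tau_c$-equivariant perfect matching (coming from pentagons on $c,x_1,x_2$). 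The key computation is that the monodromy of this bundle around a Farey triangle in $S_1(c)$ is translation by $1$; hence traversing the triangle connects $v$ to $\tau_c(v)$ inside $S'_2(c)$, and iterating connects any two points of a single fibre. Combined with the matchings over Farey edges, this gives connectivity of $S'_2(c)$. The monodromy computation is what replaces your nonexistent one-step detour.
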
   

\subsection{Previous results}
The main contribution of this paper is to strengthen the following theorem from \cite{Wright1}.
\begin{theorem}[\cite{Wright1}, Theorem 1.1]\label{previous} For all $r>0$ and connected surface $\Sigma,$
\begin{enumerate}
    \item If $\Sigma$ has high complexity, then $S_r$ is connected. 
    \item If $\Sigma$ has medium complexity, then $S_r\cup S_{r+1}$ is connected.
    \item If $\Sigma$ has low complexity, then $S_r\cup S_{r+1}\cup S_{r+2}$ is connected.
\end{enumerate}
\end{theorem}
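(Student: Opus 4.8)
Although this is Theorem~1.1 of \cite{Wright1}, here is the approach I would take. The plan is to induct on $r$ and prove the three statements in parallel. Write $\xi=\xi(\Sigma)$ and set the \emph{thickness} $m=0$ if $\xi\ge 4$, $m=1$ if $\xi=3$, and $m=2$ if $\xi=2$; in every case the goal is that the thickened sphere $A_r:=S_r\cup S_{r+1}\cup\cdots\cup S_{r+m}$ is connected, which is exactly statements (1)--(3) after reindexing. The two structural inputs are: (a) the change of coordinates principle of Farb--Margalit, used to enumerate the topological types of an essential curve, or of a disjoint pair of curves, and to normalize a configuration; and (b) the identification of the link of a vertex $v\in\mathcal{C}\Sigma$ with the curve graph of the cut surface $\Sigma\setminus v$, hence with the complete join of the curve graphs of the connected pieces of positive complexity. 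Such a join is connected as soon as it has two nonempty factors, or a single factor of complexity $\ge 2$; it degenerates to a totally disconnected set of Farey vertices precisely when the only surviving piece is an exceptional surface $\Sigma_{1,1}$ or $\Sigma_{0,4}$ --- and a quick Euler-characteristic count shows this can happen only once $\xi(\Sigma)$ has dropped to $2$, which is the source of the extra thickening in low complexity.

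For the base case $r=1$, $A_1\supseteq S_1(c)$ is the link of $c$. When $\xi\ge 3$ a short check over the types of $c$ (nonseparating; bounding a pair of pants; separating into two positive-complexity pieces, possibly both exceptional) shows that the link always meets one of the ``connected join'' conditions, so $S_1$ is already connected. When $\xi=2$ the link of $c$ can genuinely be a totally disconnected set of Farey vertices, so here the base case is itself part of the hard analysis: one shows that $S_1\cup S_2\cup S_3$ is connected by hand, producing for any two curves $a,a'$ disjoint from $c$ an explicit chain of curves running through the cut surface, and through a second cut surface, all of whose distances from $c$ stay in $\{1,2,3\}$; this uses the concrete combinatorics of $\mathcal{C}\Sigma_{0,5}$ and $\mathcal{C}\Sigma_{1,2}$.

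For the inductive step, assume $A_{r-1}$ is connected and take $a,b\in A_r$. Since any vertex of $S_{r+i}$ has a neighbour one step closer to $c$, hence in $A_r$, it suffices to treat $a,b\in S_r$; choose neighbours $a'\in S_{r-1}$ of $a$ and $b'\in S_{r-1}$ of $b$ and, by the inductive hypothesis, a path $a'=v_0,v_1,\dots,v_k=b'$ in $A_{r-1}$. The crux is a \emph{lifting lemma} that, walking along this path, carries an $S_r$-neighbour of $v_j$ to an $S_r$-neighbour of $v_{j+1}$ through $A_r$, together with a companion statement that any two $S_r$-neighbours of a single vertex of $S_{r-1}$ are joined in $A_r$. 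Both are proved by passing to the cut surface $\Sigma'=\Sigma\setminus(v_j\cup v_{j+1})$ (respectively $\Sigma\setminus v_j$): the curves it carries are disjoint from $v_j$ and $v_{j+1}$, so lie no farther than $r+m$ from $c$, and when $\xi(\Sigma')\ge 2$ the connectivity of $\mathcal{C}\Sigma'$ lets one walk between the two sides. The main obstacle, and the reason for $m=1,2$, is exactly that $\xi(\Sigma')=\xi(\Sigma)-2$: in medium complexity $\Sigma'$ is exceptional and in low complexity it is a pair of pants, so there may be no curve disjoint from both $v_j,v_{j+1}$ that sits in $S_r$, and the connecting path must be allowed to bulge out into $S_{r+1}$, respectively $S_{r+1}\cup S_{r+2}$ --- and dually it must be prevented from dipping below $S_r$. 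Carrying out these detours amounts to a careful change-of-coordinates analysis on the small surfaces $\Sigma\setminus v_j$ and $\Sigma\setminus(v_j\cup v_{j+1})$, tracking the distance to $c$ of every curve produced; this bookkeeping, rather than any single new idea, is where the real work lies.
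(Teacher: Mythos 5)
This statement is quoted verbatim from \cite{Wright1} (Theorem 1.1) and is not proved in the paper, so there is no internal proof to compare against; the relevant benchmark is Wright's argument, whose machinery the paper imports (\cref{suffCondConnSpheres}, the pentagon lemmas, essentially non-separating curves, and \cref{BGI}). Your scaffolding --- simultaneous treatment of the three cases via a thickness parameter, the analysis of links as joins of curve graphs of the cut pieces, the observation that the cut surface loses two units of complexity and that this is the source of the extra thickening --- matches the broad shape of the known proof, and your base-case discussion for $\xi\geq 3$ is essentially correct. Wright organizes the reduction differently (via the push-up criterion \cref{suffCondConnSpheres} rather than lifting a path supplied by an induction from $A_{r-1}$, although his medium-complexity argument does also induct on radius), but that difference by itself would not be fatal; note, though, that your lifting lemma is stated as if the path vertices $v_j$ all lie in $S_{r-1}$, while the inductive hypothesis only places them in $S_{r-1}\cup\cdots\cup S_{r-1+m}$, a mismatch you would need to repair.

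The genuine gap is in how you propose to keep the connecting paths from dipping below $S_r$. Disjointness from $v_j$ gives only the upper bound $d(\cdot,c)\leq r+m$; the lower bound $d(\cdot,c)\geq r$ is a global condition on the curve graph and is not determined by the topological type of the configuration, so no change-of-coordinates analysis or ``bookkeeping'' on the cut surfaces $\Sigma\setminus v_j$ or $\Sigma\setminus(v_j\cup v_{j+1})$ can certify it. The mechanism that actually makes this work is the Bounded Geodesic Image Theorem together with subsurface projections: one arranges the curves of the connecting path to have $d_U(\cdot,c)>M$ in the cut subsurface $U$, so that every geodesic to $c$ must pass through a curve not cutting $U$, and one needs that blocking curve to be (essentially) $v_j$ itself --- which is precisely why Wright restricts to essentially non-separating curves in medium complexity, and why the low complexity case requires the pentagon constructions and high powers of Dehn twists (compare \cref{lem:cor_of_bgi} and \cref{lem:crucial_lemma} in this paper). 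In low complexity there is the further obstruction, which your sketch acknowledges but does not resolve, that the cut surface is exceptional, so its curve graph is totally disconnected and one cannot ``walk in $\mathcal{C}\Sigma'$'' at all. As written, the inductive step therefore does not go through: an arbitrary path chosen in the link will in general contain vertices of $S_{r-1}$, and your proposal supplies no tool to exclude this.
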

Our \cref{mainPropLowComp} and \cref{mainPropMedComp} strengthen the above theorem, thereby answering \cite[Question 1.7]{Wright1}. Our \cref{mainPropLowComp} and \cref{mainPropMedComp} are sharp because $S_r$ is never connected for $r \geq 1$ in low complexity \cite[Corollary 6.12]{Wright1}. 

Our \cref{thm:S2_connected} describes the connected components of $S_2$ in low complexity.

\subsection{Organization of the proof}

In both the low and medium complexity cases for the connectivity of spheres (\cref{mainPropLowComp} and \cref{mainPropMedComp}), we utilize the same proof strategy, as well as the same preliminary results from \cite{Wright1}. Then we modify the paths obtained in \cite{Wright1} in order to stay closer to $S_r$, with the Bounded Geodesic Image Theorem from \cite{MasurMinsky} as our primary tool. 

%In both the low and medium complexity situations, we utilize the same proof strategy, definitions, and preliminary results as in \cite{Wright1} to make modifications to the paths in order to remain closer to $S_r$. Our primary tool to keep bound these paths is the Bounded Geodesic Image Theorem from \cite{MasurMinsky}.

%Our main contribution in the low complexity case is to take arbitrary $a\in S_r$ and $b,b'\in S_{r+1}\cap S_1(a)$ and construct a preliminary path connecting $b$ to $b'$. This path is a modification of the path obtained from \cite[Lemma 6.16]{Wright1}, whose vertices are allowed to enter $S_3(a)$. We observe that vertices on this preliminary path only enter $S_3(a)$ when they are close to $S_{r-1} \cup S_r$. This adjustment is sufficient in proving the path stays within two consecutive spheres rather than three. 

Our main contribution in the low complexity case (\cref{mainPropLowComp}) is to construct improved ``preliminary paths" (discussed in \cref{prelimPathSection}), and show this adjustment allows the argument to ultimately yield paths contained in two spheres instead of three.

In the medium complexity case (\cref{mainPropMedComp}), Wright's argument included an induction on radius, for which it was crucial to use essentially non-separating curves (\cref{essentially_non_separating}). Since we assume Wright's result, we avoid arguing by induction, so we are able to use curves which fail to be essentially non-separating to produce paths which stay in a single sphere.

%our primary observation is that since we can assume the connectivity of two consecutive spheres from \cite{Wright1}, we do not need to argue by induction and we can simply modify this path to stay within a single sphere. In particular, by not arguing by induction, we can exit the essentially non-separating curve graph defined in \cite{Wright1}.%

We prove \cref{thm:S2_connected} by showing that $S ^\prime  _2 (c) $ naturally has the structure of a $\ints$-bundle over $S_1 (c) $ (which is a copy of the Farey graph). Moreover, the monodromy of this bundle over a Farey triangle in $S_1 (c) $ is translation by $1$. This $\ints$-bundle structure is related to some existing ideas such as a version of the Lantern relation. But as far as we know, this $\ints$-bundle structure has not been recorded in the literature previously, and we expect it to be of independent interest.

\subsection{Motivation}%\todo{A Jul 23: Maybe I can help you write something about how the coarse and fine geometry are both topics of continuing study, and your results both improve our understanding of the fine geometry and also illustrate the common theme of relations between the coarse and fine geometry (ex BGI).}

This paper continues the tradition of examining the relationship between fine and coarse geometry of the curve graph. As an example, the Bounded Geodesic Image Theorem uses coarse information to deduce a precise result about the vertices on geodesics. 

In particular, we can also gain a better understanding of the coarse geometry of the curve graph as a whole by understanding the fine results. This idea is exemplified in \cite{Wright1} where the linear connectivity of the Gromov boundary (coarse) follows from an analysis of the connectivity of $S_r$ (fine). For previous connectivity results and other related work, see \cite{DeadEnds,chaika2019path,Spheres,Gabai1,klarreich2018boundary,LMS,LSconnectivity,RS,SchleimerEnd}.

Our paper also develops techniques to perform constructions directly in the curve graph rather than spaces of lamination or Teichm\"uller space. 

\subsection{Acknowledgements} 
We would like to thank our mentor Alex Wright for his guidance on this paper and acknowledge that this work was supported by NSF grant DMS-2142712.

\section{Subsurface projections and the Bounded Geodesic Image Theorem} 

In this section we introduce one of our key tools, the Bounded Geodesic Image Theorem, and recall some basic facts about subsurface projections.

Let $U$ be a subsurface of $\Sigma$ and $\alpha \in \mathcal{C}\Sigma$. We say curve $\alpha$ \emph{cuts U} if it is not possible to isotope $\alpha$ out of $U$. We define $\mathcal{C} (\Sigma, U)$ to be the subgraph of $\mathcal{C} \Sigma$ whose vertices are all essential non-peripheral curves that cut $U$, and keeping all possible edges. Note that $\mathcal{C} U$ is contained in $\mathcal{C} (\Sigma, U)$. 

Given a subsurface $U$ of $\Sigma$, there exists a subsurface projection map, denoted $\rho_U$, from the set of curves cutting $U$ to finite subsets of curves on $U$. We will want to recall some key facts about $\rho_U$: 
\begin{enumerate}
    \item [(1)] The values of $\rho_U$ are uniformly bounded in diameter.
    \item [(2)] The map $\rho_U$ is 6-Lipschitz i.e. $$d(\rho_U(\alpha), \rho_U(\beta)) \leq 6 d(\alpha, \beta).$$
    \item [(3)] Define 
    $$
 d_U (\alpha, \beta) = diam (\rho_U(\alpha) \cup \rho_U(\beta)). 
$$ It can easily be verified that $d_U$ satisfies the triangle inequality. 
\end{enumerate}

The following theorem is known as the Bounded Geodesic Image Theorem:

\begin{theorem}\cite[Theorem 3.1]{MasurMinsky}\label{BGI}
Let $U$ be a subsurface of $\Sigma$. There exists $M > 0$ such that if $d_U(\alpha, \beta) \geq M$ then every geodesic from $\alpha$ to $\beta$ in $\mathcal{C} \Sigma$ contains a curve not cutting $U$. 
\end{theorem}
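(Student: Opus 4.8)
This is the Bounded Geodesic Image Theorem of Masur--Minsky \cite{MasurMinsky}; here is how I would prove it. It is cleanest to establish the equivalent contrapositive form: there is a constant $M = M(\Sigma, U)$ such that whenever $g = (v_0, \dots, v_n)$ is a geodesic in $\mathcal{C}\Sigma$ \emph{all} of whose vertices cut $U$, one has $d_U(v_0, v_n) < M$. Granting this, the stated theorem follows at once --- if $d_U(\alpha,\beta) \ge M$ but some geodesic from $\alpha$ to $\beta$ had every vertex cutting $U$, applying this bound to that geodesic would force $d_U(\alpha,\beta) < M$. So the whole problem is to bound the $U$-projection of the two endpoints of a geodesic that never leaves the subgraph $\mathcal{C}(\Sigma, U)$ of $U$-cutting curves.

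First I would split off the case that $U$ is an annulus, where $\mathcal{C}U$ means the annular curve graph (quasi-isometric to $\mathbb{Z}$) and $\rho_U$ records relative twisting about the core of $U$; this case runs on the same scheme with the annular cover of $\Sigma$ associated to the core of $U$ in the role of $U$, and may alternatively be cited from \cite{MasurMinsky}. So assume $U$ is not an annulus. The elementary input is then fact~(2) together with~(1): consecutive vertices $v_i, v_{i+1}$ of $g$ are disjoint in $\Sigma$, so their restrictions to $U$ are disjoint essential arc-and-curve systems and $d_U(v_i, v_{i+1})$ is bounded by a universal constant $L$. Summing along $g$ and using the triangle inequality~(3) gives the \emph{linear} estimate $d_U(v_0, v_n) \le Ln$. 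The entire content of the theorem is to replace this by a bound independent of $n$.

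That replacement is the crux, and it is precisely where one must use that $g$ is a \emph{geodesic} and not merely a path: an arbitrary path inside $\mathcal{C}(\Sigma, U)$ really can have arbitrarily large $U$-projection. Here I would use one of two standard arguments. In the original Masur--Minsky argument one invokes the (Gromov) hyperbolicity of $\mathcal{C}\Sigma$, compares $g$ with a geodesic that is permitted to pass through $\partial U$, and --- using thinness of geodesic triangles together with the contracting (``bounded image'') behavior of geodesics in a hyperbolic space --- shows that $\rho_U$ is coarsely constant along $g$. Alternatively, a streamlined combinatorial argument due to Webb builds from $g$ a path of arcs in the arc-and-curve graph of $U$ whose length is controlled by a universal constant rather than by $n$ --- morally, the arcs produced by successive surgeries along a geodesic are nested and cannot accumulate --- and in fact yields an $M$ that is independent of $U$ and of $\Sigma$. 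Either route supplies the constant.

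The step I expect to be the main obstacle is exactly this promotion from the linear bound to the uniform one: showing that a geodesic confined to $\mathcal{C}(\Sigma, U)$ has bounded $U$-projection. The adjacent-vertex estimate and all of the projection bookkeeping are routine consequences of facts~(1)--(3); the genuine work, and the only point where the coarse geometry of the curve graph really enters, is in exploiting geodesicity to keep the projection from drifting.
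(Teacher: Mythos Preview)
The paper does not prove this statement at all: it is quoted as the Bounded Geodesic Image Theorem and attributed to Masur--Minsky \cite[Theorem 3.1]{MasurMinsky}, with the further remark (citing \cite{WebbUniformBGI}) that the constant $M$ may be taken independent of $\Sigma$ and $U$. There is nothing to compare against.

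Your write-up is a fair high-level outline of the two standard routes (the original hyperbolicity-based argument and Webb's combinatorial one), and you correctly isolate the real content as the passage from the trivial linear bound $d_U(v_0,v_n)\le Ln$ to a bound independent of $n$. That said, as written it is a sketch rather than a proof: in both alternatives you name the mechanism (thin triangles and contraction, or nested arcs from surgery) but do not actually carry it out, so the step you yourself flag as the main obstacle remains unexecuted. If you intend this as a genuine proof you would need to commit to one route and supply the argument that geodesicity forces bounded projection; if you intend it merely as context for a citation, then it already matches what the paper does, and a single sentence citing \cite{MasurMinsky} (and \cite{WebbUniformBGI} for uniformity of $M$) would suffice.
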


From here on, $M$ will refer to the constant required for Theorem \ref{BGI}, which can be taken independent of $\Sigma$ and $U$ \cite{WebbUniformBGI}. 

%%%%%%%%%%% Low complexity %%%%%%%%%%%%%%%
%%%%%%%%%%% Low complexity %%%%%%%%%%%%%%%
%%%%%%%%%%% Low complexity %%%%%%%%%%%%%%%
%%%%%%%%%%% Low complexity %%%%%%%%%%%%%%%
%%%%%%%%%%% Low complexity %%%%%%%%%%%%%%%
%%%%%%%%%%% Low complexity %%%%%%%%%%%%%%%
%%%%%%%%%%% Low complexity %%%%%%%%%%%%%%%
%%%%%%%%%%% Low complexity %%%%%%%%%%%%%%%
%%%%%%%%%%% Low complexity %%%%%%%%%%%%%%%

\section{Low complexity}

Throughout this section, we deal with $\Sigma = \Sigma_{0,5}$. Assume that a center vertex $c \in \mathcal{C}\Sigma_{0,5}$ is fixed and let $S_r = S_r(c)$. 

\subsection{Organization}

%Our main contribution in the low complexity case is to take arbitrary $a\in S_r$ and $b,b'\in S_{r+1}\cap S_1(a)$ and construct a preliminary path connecting $b$ to $b'$. This path is a modification of the path obtained from \cite[Lemma 6.16]{Wright1}, whose vertices are allowed to enter $S_3(a)$. We observe that vertices on this preliminary path only enter $S_3(a)$ when they are close to $S_{r-1} \cup S_r$. This adjustment is sufficient in proving the path stays within two consecutive spheres rather than three. 

The outcome of this section is to prove \cref{mainPropLowComp}. We do so by first taking arbitrary $a\in S_r$ and $b,b'\in S_{r+1}\cap S_1(a)$ and constructing a preliminary path, described in \cref{modifiedPath}, connecting $b$ to $b'$. We then offer \cref{lem:crucial_lemma} to serve a similar function as \cite[Lemma 6.16]{Wright1} to push this path up to $S_r\cup S_{r+1}$ using Dehn twists, by observing that vertices on this preliminary path only enter $S_3(a)$ when they are close to $S_{r-1} \cup S_r$. This adjustment is sufficient in proving the path stays within two consecutive spheres rather than three.

\subsection{Definitions}

\begin{definition}\label{uniqueBacktrackDef}
    A vertex $x \in S_r$ has \emph{unique backtracking} if it has a unique neighbour in $S_{r-1}.$ 
\end{definition}

\begin{definition}\label{noSideStepDef}
    A vertex $x \in S_r$ has \emph{no sidestepping} if it does not have any neighbour in  $S_r$.
\end{definition}

\begin{definition}\label{forwardFacingDef}
    A vertex $x \in S_r$ is \emph{forward facing} if it has unique backtracking and no sidestepping. 
\end{definition}

%****                       ***%
%%%%%%%%%%% completing pentagons %%%%%%%%%%%%%%%
%%%%%%%%%%% completing pentagons %%%%%%%%%%%%%%%
%%%%%%%%%%% completing pentagons %%%%%%%%%%%%%%%
%****                       ***%

\subsection{Pentagons in $\mathcal{C}\Sigma_{0,5}$}
It is important to note that $\mathcal{C}\Sigma$ contains no cycles of length 3 or 4 \cite[Lemma 6.1]{Wright1}. Thus we often study paths on $\mathcal{C}\Sigma$ by using pentagons. 
\begin{definition}
    Label the 5 punctures of $\Sigma$ with the elements of $\mathbb{Z}/5\mathbb{Z}$. The 5 tuple of curves $(a_1, a_2, a_3, a_4, a_5)$ is a \emph{pentagon} if for $i\in\mathbb{Z}/5\mathbb{Z}$:
    \begin{enumerate}
        \item[(1)] $a_i$ goes around punctures $i$ and $i+1$, 
        \item[(2)] the intersection number between $a_i, a_{i+1}$ and $a_i, a_{i-1}$ is 2, and 
        \item[(3)] the intersection number between $a_i, a_{i+2}$ and $a_i, a_{i-2}$ is 2. 
    \end{enumerate}
\end{definition}

To obtain a 5-cycle from a pentagon with vertices $(a_1, a_2, a_3, a_4, a_5)$, we can traverse the curves in the following order: $(a_1, a_3, a_5, a_2, a_4)$. We use the following lemmas to find pentagons in $\mathcal{C}\Sigma_{0,5}$.  

\begin{lemma}\cite[Lemma 6.5]{Wright1}\label{6.5inPaper}
    Suppose $a_1, a_3 \in S_{r-1}$ are adjacent. Then there are curves $a_2, a_3, a_5 \in S_r \cup S_{r+1}$ such that $(a_1,a_2,a_3,a_4,a_5)$ is a pentagon.
\end{lemma}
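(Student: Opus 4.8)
The plan is to write the pentagon down explicitly, read off upper bounds for the distances of its three new vertices $a_2,a_4,a_5$ for free, and then pin them from below with the Bounded Geodesic Image Theorem (\cref{BGI}). First, the hypothesis forces $r\ge 3$: if $r-1\le 1$ then $a_1,a_3$ are adjacent vertices of $S_0\cup S_1$, so either one of them is $c$ (impossible, since then the other lies in $S_0=\{c\}$ yet is adjacent to it), or $a_1,a_3\in S_1$ and $\{c,a_1,a_3\}$ is a $3$-cycle in $\mathcal{C}\Sigma_{0,5}$, which is forbidden \cite[Lemma 6.1]{Wright1}. Hence $d(c,a_1)=d(c,a_3)=r-1\ge 2$, so $c$ meets both $a_1$ and $a_3$ essentially and the annular projections $\rho_{A_{a_1}}(c)$, $\rho_{A_{a_3}}(c)$ are defined, where $A_x$ denotes an annular neighbourhood of a curve $x$.

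\emph{The pentagons through $\{a_1,a_3\}$.} Since $a_1\perp a_3$, every complementary component of $a_1\cup a_3$ in $\Sigma_{0,5}$ is planar with at most three boundary/puncture components, so no essential non-peripheral curve is disjoint from both $a_1$ and $a_3$. The link of $a_1$ in $\mathcal{C}\Sigma_{0,5}$ is a copy of the Farey graph $F_1$ (the curve graph of the four-holed sphere cut off by $a_1$), in which $a_3$ is a vertex; a curve can play the role of $a_4$ exactly when it is disjoint from $a_1$ and meets $a_3$ in two points, i.e.\ when it is a Farey-neighbour of $a_3$ in $F_1$, and these form a bi-infinite family $\{T_{a_3}^n(a_4^{(0)})\}_{n\in\ints}$. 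Symmetrically the candidates for $a_5$ are the Farey-neighbours of $a_1$ in $F_3:=\mathcal{C}(\Sigma_{0,5}\setminus a_3)$, a family $\{T_{a_1}^m(a_5^{(0)})\}_{m\in\ints}$, and the pentagons through $\{a_1,a_3\}$ form an infinite family swept out by these twists, with $a_2$ then determined (up to a further twist) by $a_4,a_5$. In any such pentagon $a_4\perp a_1\in S_{r-1}$ gives $d(c,a_4)\le r$, likewise $d(c,a_5)\le r$, and then $a_2\perp a_4$ gives $d(c,a_2)\le r+1$; so the only failure modes are $d(c,a_4)\le r-1$, $d(c,a_5)\le r-1$, or $d(c,a_2)\le r-1$.

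\emph{Pushing $a_4$ and $a_5$ out.} Here \cref{BGI} does the work. Suppose an eligible $a_4'=T_{a_3}^n(a_4^{(0)})$ had $d(c,a_4')\le r-1$. Twisting about $a_3$ sends $d_{A_{a_3}}(a_4^{(0)},a_4')\to\infty$, so for $|n|$ large $d_{A_{a_3}}(c,a_4')\ge M$ and \cref{BGI} puts a curve $v$ with $i(v,a_3)=0$ on every geodesic from $c$ to $a_4'$. Since $v$ lies on a geodesic of length $\le r-1$ and $v\perp a_3\in S_{r-1}$ forces $d(c,v)\ge r-2$, we get $d(v,a_4')\le 1$; moreover $v\ne a_3$ (else the geodesic would have length $\ge d(c,a_3)+d(a_3,a_4')=(r-1)+2$) and $v\ne a_4'$ (as $v\perp a_3$ while $a_4'$ meets $a_3$), so $v$ is disjoint from both $a_3$ and $a_4'$. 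But $a_3\cup a_4'$ fills $\Sigma_{0,5}$ up to a twice-punctured disc, whose boundary is the unique curve disjoint from both; since $a_1$ is disjoint from $a_3$ and from $a_4'$, that curve is $a_1$, so $v=a_1$ and $d(c,a_4')=d(c,a_1)+1=r$, a contradiction. Thus all but finitely many eligible $a_4$ lie in $S_r$, and symmetrically for $a_5$; so a pentagon through $\{a_1,a_3\}$ with $a_4,a_5\in S_r$ exists.

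\emph{The remaining vertex, and the main obstacle.} It remains to arrange $a_2\in S_r\cup S_{r+1}$ inside such a pentagon, i.e.\ to exclude $d(c,a_2)\le r-1$. I would again aim for a \cref{BGI} argument: twist the pentagon so that $a_2$ projects far into $A_{a_1}$ (or $A_{a_3}$), forcing a geodesic from $c$ to $a_2$ through a curve disjoint from $a_1$, and identify that curve with the same ``fill and take the complementary boundary'' dictionary used above. The difficulty — and where I expect the real work to be — is coordinating the twists: twisting about $a_1$ moves $a_2$ and $a_5$ but fixes $a_4$, twisting about $a_3$ moves $a_2$ and $a_4$ but fixes $a_5$, so one must show that a single pair of twist parameters keeps $a_4,a_5\in S_r$ while simultaneously forcing $a_2$ out of $S_{\le r-1}$ — equivalently, that for each of $a_2,a_4,a_5$ the set of ``bad'' twist parameters is bounded and these bounded exceptional sets can all be avoided at once.
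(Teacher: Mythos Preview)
The paper does not prove this lemma; it is quoted from \cite[Lemma~6.5]{Wright1} and used as a black box, so there is no in-paper argument to compare against.

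Your approach is correct and essentially complete except for the coordination step you flag at the end, and that gap closes cleanly. First apply a large power of $T_{a_3}$ so that your BGI argument places $a_4\in S_r$; this moves $a_2,a_4$ and fixes $a_1,a_3,a_5$. Now apply a large power of $T_{a_1}$; this fixes $a_1,a_3,a_4$ (so $a_4$ remains in $S_r$) and moves $a_2,a_5$. For $a_5$ your own argument, with the roles of $a_1,a_3$ swapped, gives $a_5\in S_r$. For $a_2$: suppose $d(c,a_2)\le r-1$. BGI for the annulus about $a_1$ puts a vertex $v\in B_1(a_1)$ on a geodesic from $c$ to $a_2$; since $d(c,v)\ge r-2$ one gets $d(v,a_2)\le 1$, and exactly as in your $a_4$ analysis one rules out $v=a_1$ and $v=a_2$, so $v$ is disjoint from both $a_1$ and $a_2$. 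But $a_1$ and $a_2$ are Farey-adjacent in the four-holed sphere cut off by $a_4$ and hence fill it, so the unique curve disjoint from both is $a_4$; thus $v=a_4$ and $d(c,a_4)=d(c,v)\le r-2$, contradicting $a_4\in S_r$. Hence $a_2\in S_r\cup S_{r+1}$, the upper bound coming from $a_2\perp a_4\in S_r$. The observation you were missing is that the blocking curve for $a_2$ in the BGI argument is precisely $a_4$, whose distance you have already pinned; so a single pass---large twists about $a_3$, then large twists about $a_1$---handles all three new vertices at once, and no delicate simultaneous avoidance of bounded bad sets is required.
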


\begin{lemma}\cite[Lemma 6.6]{Wright1}\label{completeAPentagon}
Suppose $a_1 \in S_{r-1}$ and $a_3,a_4\in S_r\cap S_1(a_1)$ have $i(a_3,a_4)=2.$ Then there exist $a_2,a_5\in S_r\cup S_{r+1}$ such that $(a_1,a_2,a_3,a_4,a_5)$ is a pentagon.
\end{lemma}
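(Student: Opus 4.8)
The plan is to first produce \emph{some} pentagon through $a_1,a_3,a_4$ in the prescribed positions, and then to adjust it by Dehn twisting about $a_1$ until the two new curves land in $S_r\cup S_{r+1}$. For the existence step I would note that $a_3,a_4$, being adjacent to $a_1$, are disjoint from $a_1$ and hence lie in the four-holed sphere $F=\Sigma\setminus a_1$; since $i(a_3,a_4)=2$ they span an edge of the Farey graph $\mathcal{C}F$. Labeling the punctures so that $a_1$ encloses $\{1,2\}$ and $a_3,a_4$ enclose $\{3,4\}$ and $\{4,5\}$ (legitimate, since the distinct $2$-subsets they enclose in $F$ share exactly one puncture), the triple $(a_1,a_3,a_4)$ acquires exactly the topological type of positions $1,3,4$ of the standard pentagon on five cyclically arranged punctures. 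Transitivity of the mapping class group on edges of $\mathcal{C}F$, extended over $\Sigma$, shows this type is unique up to homeomorphism, so carrying the remaining two standard curves through such a homeomorphism gives $a_2^{(0)},a_5^{(0)}$ with $(a_1,a_2^{(0)},a_3,a_4,a_5^{(0)})$ a pentagon.

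I would then work with the $\ints$-family of pentagons $\bigl(a_1,\,T^n_{a_1}(a_2^{(0)}),\,a_3,\,a_4,\,T^n_{a_1}(a_5^{(0)})\bigr)$; each is again a pentagon through $a_1,a_3,a_4$ because $T_{a_1}$ is supported in an annulus about $a_1$, which is disjoint from $a_3$ and $a_4$. Write $a_2=T^n_{a_1}(a_2^{(0)})$. From the $5$-cycle $a_1-a_3-a_5-a_2-a_4-a_1$ of a pentagon, $a_2$ is disjoint from $a_4\in S_r$, so $d(a_2,c)\in\{r-1,r,r+1\}$, and the whole content of the lemma is to rule out $d(a_2,c)=r-1$ (and symmetrically for $a_5$). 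The key elementary input, used several times, is that because $a_1$ and $a_2^{(0)}$ enclose $\{1,2\}$ and $\{2,3\}$ and meet exactly twice, $\Sigma\setminus(a_1\cup a_2^{(0)})$ is a union of four disks, one of which contains precisely the punctures $\{4,5\}$; consequently the only essential curve of $\Sigma$ disjoint from both $a_1$ and $a_2^{(0)}$ is $a_4$ (and, symmetrically, the only one disjoint from both $a_1$ and $a_5^{(0)}$ is $a_3$). The cases $r\le2$ then follow at once: for $r=1$, $a_2\ne c$ and $a_2$ is adjacent to $a_4\in S_1$; for $r=2$, if $a_2^{(0)}\in S_1$ then $c$ would be disjoint from both $a_1$ and $a_2^{(0)}$, forcing $c=a_4\in S_2$, which is absurd.

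The remaining case $r\ge3$ is where the work lies, and I expect it to be the main obstacle. Suppose $d(a_2,c)=r-1$ and fix a geodesic $c=v_0,\dots,v_{r-1}=a_2$. Since $i(a_1,a_2^{(0)})=2\ne0$, Dehn twisting shifts the subsurface projection to the annulus $A_{a_1}$ about $a_1$ linearly, so $d_{A_{a_1}}(a_2,c)\to\infty$ as $|n|\to\infty$ (note $c$ really does cut $A_{a_1}$, as $d(c,a_1)=r-1\ge2$); once $d_{A_{a_1}}(a_2,c)\ge M$, the Bounded Geodesic Image Theorem forces some $v_j$ to be disjoint from $a_1$. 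From $d(v_j,a_1)\le1$ we get $j\in\{r-2,r-1\}$, and $j=r-1$ is impossible because $i(a_2,a_1)=i(a_2^{(0)},a_1)=2$; hence $v_{r-2}$ is disjoint from $a_1$ with $d(v_{r-2},c)=r-2$. Being disjoint from $a_1$, $v_{r-2}$ is fixed by $T_{a_1}$, so it is also disjoint from $T^{-n}_{a_1}(a_2)=a_2^{(0)}$; by the uniqueness noted above, $v_{r-2}=a_4$, contradicting $d(a_4,c)=r$. Running the identical argument with $a_5^{(0)}$ in place of $a_2^{(0)}$ rules out $d(a_5,c)=r-1$, so choosing $|n|$ large enough for both and renaming $a_2,a_5$ accordingly finishes the proof. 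In short, the hard part is recognizing that a pentagon through $a_1,a_3,a_4$ carries exactly a $\ints$'s worth of freedom, coming from twisting about $a_1$, and combining the Bounded Geodesic Image Theorem with the fact that $a_1\cup a_j$ ($j=2,5$) leaves a unique essential curve in its complement; the construction of the pentagon and the coarse distance bounds are routine.
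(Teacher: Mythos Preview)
The paper does not prove this lemma; it is quoted from \cite[Lemma~6.6]{Wright1} without argument, so there is no in-paper proof to compare against.

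Your proof is correct and recovers the expected argument: build some pentagon through $(a_1,a_3,a_4)$ by change of coordinates, then twist about $a_1$ and use the Bounded Geodesic Image Theorem on the annulus $A_{a_1}$ to rule out $a_2,a_5\in S_{r-1}$ for large $|n|$. The key topological step---that $a_4$ is the \emph{unique} essential curve disjoint from both $a_1$ and $a_2^{(0)}$, so the BGI-forced vertex $v_{r-2}$ on a putative length-$(r-1)$ geodesic from $c$ to $a_2$ must equal $a_4$, contradicting $a_4\in S_r$---is exactly right, and your separate treatment of $r\le 2$ (where $c$ fails to cut $A_{a_1}$, so BGI is unavailable) is clean. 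One small point worth making explicit in the existence step: the transitivity you invoke is that of the mapping class group of $\Sigma_{0,5}$ stabilizing $a_1$, acting on edges of the Farey graph $\mathcal{C}(\Sigma\setminus a_1)$; this holds once one allows permutations of the three punctures on the non-pants side, which is harmless here. The overall twist-then-BGI mechanism is the same one the present paper deploys repeatedly (cf.\ \cref{lem:cor_of_bgi} and \cref{lem:Dehn_twist_lifts}).
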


%****                       ***%
%%%%%%%%%%% replacement path %%%%%%%%%%%%%%%
%%%%%%%%%%% replacement path %%%%%%%%%%%%%%%
%%%%%%%%%%% replacement path %%%%%%%%%%%%%%%
%****                       ***%

\subsection{Preliminary path construction}\label{prelimPathSection}

%%%%%%%%%%% lem: replacement path %%%%%%%%%%%%%%%

\begin{proposition}\label{modifiedPath}
    Suppose $a\in S_r$ and $b,b'\in S_{r+1}\cap S_1(a).$ Then there exists a path $\gamma$ from $b$ to $b'$ contained in $S_1(a)\cup S_2(a)\cup S_3(a)$ such that the following hold for all vertices $v$ on the path $\gamma$:
    \begin{enumerate}
        \item If $v\in S_3(a),$ then $d(v,(S_{r-1}\cup S_r)\cap S_1(a))\leq 2.$
        \item If $v\in S_1(a),$ then $v\in S_{r+1}.$
    \end{enumerate}
\end{proposition}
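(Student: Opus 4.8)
The plan is to realize $b$ and $b'$ as vertices of the Farey graph $F=\mathcal C Y$, where $Y\cong\Sigma_{0,4}$ is the four‑holed‑sphere component of $\Sigma\setminus a$ (the other component is a pair of pants, contributing no curves); thus the vertex set of $F$ is exactly $S_1(a)$, although $F$ carries \emph{no} edge of $\mathcal C\Sigma$ since distinct curves in $Y$ intersect. Fix a Farey geodesic $\sigma\colon b=y_0,y_1,\dots,y_k=b'$ in $F$, to be used only as a combinatorial skeleton. Call $y\in S_1(a)$ \emph{outer} if $d(y,c)=r+1$ and \emph{inner} if $d(y,c)\le r$; these are the only options since $y$ is adjacent to $a\in S_r$, and $b,b'$ are outer. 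The path $\gamma$ will be built from $\sigma$ by replacing every Farey edge, and every maximal block of consecutive inner vertices, by a short detour assembled from the pentagon lemmas \cref{completeAPentagon} and \cref{6.5inPaper}, all of whose pentagons have their apex disjoint from $a$.

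\textbf{Detouring over an outer edge.} If $y_i,y_{i+1}$ are both outer, apply \cref{completeAPentagon} with center $c$, radius $r+1$, and apex $a_1=a$: one has $a\in S_{r}$, and $a$ is disjoint from both $y_i,y_{i+1}$ (indeed, as $y_i,y_{i+1}$ are Farey‑adjacent they fill $Y$, so $a$ is the \emph{only} such curve), while $i(y_i,y_{i+1})=2$. The lemma produces $a_2,a_5\in S_{r+1}\cup S_{r+2}$ making $(a,a_2,y_i,y_{i+1},a_5)$ a pentagon, and deleting the apex $a$ from the associated $5$‑cycle leaves a detour $y_i- a_5- a_2- y_{i+1}$. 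Since $\mathcal C\Sigma$ has no $3$‑cycles and $a$ is joined in the $5$‑cycle to $y_i$ and $y_{i+1}$, both $a_5$ and $a_2$ must intersect $a$; as each has a common neighbour with $a$ (namely $y_i$, resp.\ $y_{i+1}$), we get $a_5,a_2\in S_2(a)$. Hence outer‑edge detours stay in $S_1(a)\cup S_2(a)$ and meet $S_1(a)$ only at the outer vertices $y_j$, so neither conclusion is endangered here.

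\textbf{Detouring past an inner block.} First one confines the inner vertices: the only curve of $\mathcal C\Sigma$ not cutting $Y$ is $a$, so \cref{BGI} applied to $Y$ forces any $\mathcal C\Sigma$‑geodesic from $c$ to a vertex $y\in S_1(a)$ with $d_Y(c,y)\ge M$ to pass through $a$, whence $d(c,y)=r+1$; thus every inner vertex lies within Farey‑distance $M$ of $\rho_Y(c)$. Given a maximal block $y_s,\dots,y_t$ of consecutive inner vertices (its neighbours $y_{s-1},y_{t+1}$ on $\sigma$ are then outer), one routes from $y_{s-1}$ to $y_{t+1}$ through pentagons whose apexes are inner curves in $I:=(S_{r-1}\cup S_r)\cap S_1(a)$: applying \cref{completeAPentagon} and \cref{6.5inPaper} with such an apex $y_*\in I$ (at radius $d(c,y_*)+1$) and using the freedom in the choice of the remaining input curves to keep the non‑apex vertices from lying in $S_1(a)$, one obtains a pentagon $(y_*,q_2,q_3,q_4,q_5)$; again by the no‑short‑cycle principle the four non‑apex vertices each lie at distance at most $2$ from $y_*\in I$, hence at distance at most $3$ from $a$ and at most $2$ from $I$, exactly as (1) demands. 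Traversing each such pentagon along the side avoiding its apex and splicing consecutive pentagons (with the extreme vertices chosen disjoint from $y_{s-1}$, resp.\ $y_{t+1}$) yields a walk from $y_{s-1}$ to $y_{t+1}$ inside $S_1(a)\cup S_2(a)\cup S_3(a)$ meeting $S_1(a)$ only at those outer endpoints. Concatenating the outer‑edge detours and the inner‑block detours along $\sigma$ gives $\gamma$, and (1), (2) follow from the highlighted properties.

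\textbf{Main obstacle.} The delicate step is the inner‑block by‑pass: one must choose the pentagons so precisely that the resulting walk (a) actually connects $y_{s-1}$ to $y_{t+1}$, (b) never re‑enters $S_1(a)$ at an inner vertex and never strays beyond $S_3(a)$, and (c) keeps every $S_3(a)$‑vertex within distance $2$ of $I$ — and this requires combining the confinement of the inner region coming from \cref{BGI} with the absence of $3$‑ and $4$‑cycles in $\mathcal C\Sigma$ to control the Farey combinatorics around $\rho_Y(c)$. The small‑radius cases, where $c$ itself is inner (e.g.\ $c\in S_1(a)$ when $r=1$), are handled by the same construction with $c$ playing the role of an inner apex.
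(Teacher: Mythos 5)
Your approach starts from a Farey geodesic in $S_1(a)$ and tries to replace each Farey edge and each inner block with a $\mathcal{C}\Sigma$-path, which is a genuinely different route from the paper and, as written, incomplete. The paper instead takes a $\mathcal{C}\Sigma$-path $b=x_0,\dots,x_l=b'$ in $S_1(a)\cup S_2(a)$ (directly from \cref{6.13inPaper}) and then, at each bad vertex $x_j\in(S_{r-1}\cup S_r)\cap S_1(a)$, replaces the segment $x_{j-1},x_j,x_{j+1}$ by the sidestep given by \cref{avoidS1} (itself a direct instance of \cref{6.14inPaper} recentered at $a$). This immediately produces a path satisfying both conclusions: sidestep vertices are in $(S_2(a)\cup S_3(a))\cap B_2(x_j)$, which gives (1), and the only surviving $S_1(a)$-vertices are outer, which gives (2). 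Notice that no Farey combinatorics or case analysis on blocks is needed.

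The gap in your proposal is exactly the step you flag as the ``Main obstacle'': you never actually construct the inner-block bypass. You claim that ``applying \cref{completeAPentagon} and \cref{6.5inPaper} with such an apex $y_*\in I$'' and ``splicing consecutive pentagons'' yields the required walk, but these lemmas have specific hypotheses that are not verified here. \cref{completeAPentagon} needs two curves in $S_1(y_*)$ at the correct radius from $c$ with intersection number $2$, and \cref{6.5inPaper} needs two \emph{adjacent} curves one sphere down; in the inner-block setting you have not exhibited either configuration, nor explained how to choose the pentagons so that their free sides chain together into a single walk from $y_{s-1}$ to $y_{t+1}$, nor why such a walk would avoid the inner part of $S_1(a)$ and stay within distance $2$ of $I$. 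Your outer-edge detour (pentagon with apex $a$) is fine, and your use of \cref{BGI} to show $d_Y(c,y)<M$ for inner $y\in S_1(a)$ is correct; but the inner-block argument is a sketch of a sketch, and without it the proof does not close. Since \cref{6.14inPaper} was proved in the source paper precisely to package this sidestepping operation, it is likely more efficient to use it (recentered at $a$, as in \cref{avoidS1}) rather than re-derive it from scratch via Farey geodesics.
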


First we recall the following lemmas:
\begin{lemma}\cite[Lemma 6.10]{Wright1}\label{6.10inPaper}
For any $a\in \mathcal{C}\Sigma_{0,5}$ and $x\in S_1(a),$ $x$ is forward facing with respect to $a.$
\end{lemma}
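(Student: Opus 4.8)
The plan is to unwind the definition of ``forward facing'' in the degenerate case $r=1$, where both clauses become essentially immediate; the only genuine content is the absence of triangles in $\mathcal{C}\Sigma_{0,5}$.

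For unique backtracking, I would first note that if $x\in S_1(a)$ then $r:=d(a,x)=1$, so the sphere $S_{r-1}(a)=S_0(a)$ is the single vertex $\{a\}$. Since $d(a,x)=1$ says precisely that $a$ and $x$ are adjacent, $a$ is a neighbour of $x$ lying in $S_0(a)$, and it is the only vertex there; hence $x$ has a unique neighbour in $S_{r-1}(a)$. This step is automatic once the bookkeeping is in place.

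For no sidestepping, I would argue by contradiction. Suppose $x$ has a neighbour $y\in S_1(a)$. Then $a$, $x$, $y$ are pairwise adjacent: $a\sim x$ and $a\sim y$ because $x,y\in S_1(a)$, and $x\sim y$ by assumption. They are also pairwise distinct, since $\mathcal{C}\Sigma_{0,5}$ has no loops and $a\notin S_1(a)$. So $\{a,x,y\}$ spans a $3$-cycle in $\mathcal{C}\Sigma_{0,5}$, contradicting \cite[Lemma 6.1]{Wright1} (recalled above, that $\mathcal{C}\Sigma_{0,5}$ has no cycles of length $3$ or $4$). If one prefers an argument not invoking that lemma, such a triangle would be a triple of pairwise-disjoint, pairwise-non-isotopic essential curves on $\Sigma_{0,5}$, which is impossible since $\xi(\Sigma_{0,5})=2$ bounds any such collection by $2$. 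Either way, $x$ has no neighbour in $S_1(a)$.

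Combining the two clauses gives that $x$ is forward facing with respect to $a$. I do not expect any real obstacle here; the one thing to be careful about is the degenerate case $r=1$, where ``$S_{r-1}$'' collapses to the singleton $\{a\}$, making unique backtracking hold vacuously, so that essentially all the (very mild) work is in the no-sidestepping clause.
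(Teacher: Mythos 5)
Your proof is correct: unique backtracking is automatic because $S_0(a)=\{a\}$ and $x$ is adjacent to $a$, and no sidestepping follows from the absence of $3$-cycles in $\mathcal{C}\Sigma_{0,5}$ (equivalently, from the bound $\xi(\Sigma_{0,5})=2$ on multicurves). The paper does not prove this lemma itself but quotes it from \cite{Wright1}, and your argument is exactly the standard one behind that citation, so there is nothing to flag.
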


\begin{lemma}\cite[Lemma 6.13]{Wright1}\label{6.13inPaper}
For any $a\in \mathcal{C}\Sigma_{0,5},$ $S_1(a)\cup S_2(a)$ is connected.
\end{lemma}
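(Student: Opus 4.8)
The plan is to take the path produced by \cref{6.13inPaper} and surger it until both conditions hold. Write $B := S_1(a)\cap(S_{r-1}\cup S_r)$ for the set of vertices of $S_1(a)$ that condition (2) forbids; note $(S_{r-1}\cup S_r)\cap S_1(a)=B$, so condition (1) is exactly the requirement that every vertex of $\gamma$ in $S_3(a)$ lie within distance $2$ of $B$, and note $b,b'\notin B$ by hypothesis (they lie in $S_{r+1}$). Applying \cref{6.13inPaper} with center $a$ yields a path $b=v_0,v_1,\dots,v_k=b'$ inside $S_1(a)\cup S_2(a)$; this already lies in $S_1(a)\cup S_2(a)\cup S_3(a)$ and satisfies condition (1) vacuously, but some interior $v_i$ may lie in $B$.

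Since $S_1(a)$ is independent by \cref{6.10inPaper}, each $v_i\in B$ on the path has its two path-neighbours $v^-,v^+$ in $S_2(a)$, and $v^-,v^+\in S_1(v)$ where $v:=v_i$. I would replace the subpath $v^-\,v\,v^+$ by a path $\tau_v$ from $v^-$ to $v^+$ lying in $S_1(v)\cup S_2(v)$, obtained by applying \cref{6.13inPaper} with center $v$ (and chosen to avoid $a$, which is legitimate: $a\in S_1(v)$ and, because $\mathcal{C}\Sigma$ has no $3$-cycles, $a$ has no neighbour in $S_1(v)$, so deleting $a$ cannot separate $v^-$ from $v^+$ in $S_1(v)\cup S_2(v)$). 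Because $v\in S_1(a)$, every vertex of $\tau_v$ is within distance $2$ of $v$, hence within distance $3$ of $a$; and the no-$3$-cycles property forces any vertex of $\tau_v$ lying in $S_1(a)$ to lie in fact in $S_2(v)$, since such a vertex cannot be disjoint from both $a$ and $v$. Thus $\tau_v\subseteq S_1(a)\cup S_2(a)\cup S_3(a)$, every vertex of $\tau_v$ in $S_3(a)$ is at distance exactly $2$ from $v\in B$ so satisfies condition (1), and the only vertices of $\tau_v$ that can still violate condition (2) are those lying in $S_2(v)\cap S_1(a)\cap B$.

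After these replacements the path lies in $S_1(a)\cup S_2(a)\cup S_3(a)$ and satisfies (1); the remaining issue --- and the main obstacle --- is that the surgery can reintroduce vertices of $B$, so one must show the process terminates, equivalently that the subgraph $(S_1(a)\setminus B)\cup S_2(a)\cup\{x\in S_3(a):d(x,B)\le 2\}$ is connected. Here the Bounded Geodesic Image Theorem enters, applied to the subsurface $U$ cut off by $a$ on the side carrying essential curves (so that $S_1(a)=\mathcal{C} U$ is a Farey graph and $\rho_U$ is essentially the identity on $S_1(a)$): if $t\in S_1(a)$ has $d_U(t,c)\ge M$, then every geodesic from $t$ to $c$ passes through the unique essential curve disjoint from $U$, namely $a$, whence $d(t,c)=1+d(a,c)=r+1$ and $t\notin B$. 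Therefore $B$ is confined to a bounded neighbourhood of $\rho_U(c)$ inside the Farey graph $S_1(a)$, and I would finish either by bounding the number of surgeries in terms of $M$ (routing each $\tau_v$ to leave this bounded region, using that $v^\pm\notin S_1(a)$) or by proving connectivity of the displayed subgraph directly from the Farey structure of $S_1(a)$ together with this localization of $B$. This last point is the crux of the argument.
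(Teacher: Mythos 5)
Your proposal does not prove the statement it was assigned. The statement here is \cref{6.13inPaper} itself --- the assertion that $S_1(a)\cup S_2(a)$ is connected for any $a\in\mathcal{C}\Sigma_{0,5}$ --- which in this paper is not proved at all but imported from Wright (\cite[Lemma 6.13]{Wright1}). What you have written is instead an attempt at \cref{modifiedPath} (the preliminary path construction with conditions (1) and (2)), and your very first step is to invoke \cref{6.13inPaper} to produce the initial path from $b$ to $b'$. As an argument for the assigned statement this is circular: you assume the connectivity of $S_1(a)\cup S_2(a)$ in order to conclude it. If the intent was to reprove Wright's lemma, none of the machinery you develop (the set $B$, the surgery at vertices of $B$, the Bounded Geodesic Image argument localizing $B$ in the Farey graph $S_1(a)$) is relevant to that purely local statement about an arbitrary center $a$, which has no reference to $c$, $r$, or spheres $S_{r\pm 1}$.

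Even read charitably as an attempt at \cref{modifiedPath}, the argument has the gap you yourself flag as ``the crux'': your surgery, which reroutes around a bad vertex $v\in B$ by recentering \cref{6.13inPaper} at $v$, produces detours that may re-enter $S_1(a)$ and hence reintroduce vertices of $B$, and you do not show the process terminates or that the displayed subgraph is connected. The paper sidesteps this entirely: it replaces the segment $x_{j-1},x_j,x_{j+1}$ around a bad vertex $x_j\in(S_{r-1}\cup S_r)\cap S_1(a)$ using \cref{6.14inPaper} (via \cref{avoidS1}, which applies because every vertex of $S_1(a)$ is forward facing by \cref{6.10inPaper}); the resulting detour lies in $(S_2(a)\cup S_3(a))\cap B_2(x_j)$, so it never meets $S_1(a)$, no new bad vertices can appear, and a single pass of replacements suffices. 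Replacing your use of recentered connectivity by \cref{6.14inPaper} is exactly the missing idea that removes the termination problem.
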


\begin{lemma}\cite[Lemma 6.14]{Wright1}\label{6.14inPaper}
Suppose $x\in S_r$ is forward facing and $y,y'\in S_1(x)\cap S_{r+1}.$ Then there exists a path from $y$ to $y'$ in $(S_{r+1}\cup S_{r+2})\cap B_2(x).$
\end{lemma}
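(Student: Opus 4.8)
The plan is to recognize $S_1(x)$ as (the vertex set of) a Farey graph, join $y$ to $y'$ by an edge-path in that Farey graph which avoids the backtracking vertex, and then replace each Farey edge of that path by a genuine length-three path of $\mathcal C\Sigma$ obtained from a pentagon via \cref{completeAPentagon}.

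First I would fix the Farey picture. Write $\Sigma\setminus x = P\sqcup W$, where $P$ is a pair of pants (two punctures and the boundary $x$) and $W$ is a four-holed sphere (three punctures and the boundary $x$). Since $P$ has no essential non-peripheral curves, a curve of $\mathcal C\Sigma$ is disjoint from $x$ exactly when it is an essential non-peripheral curve of $W$; hence $S_1(x)$ is precisely the vertex set of $\mathcal CW$, which, as $\xi(W)=1$, is a copy of the Farey graph. Because $x$ is forward facing, the triangle inequality puts every vertex of $S_1(x)$ in $S_{r-1}\cup S_r\cup S_{r+1}$; ``no sidestepping'' excludes $S_r$; and ``unique backtracking'' says exactly one vertex $p\in S_1(x)$ lies in $S_{r-1}$. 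Therefore $S_1(x)\cap S_{r+1}=S_1(x)\setminus\{p\}$, and in particular $y,y'\ne p$.

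Since the Farey graph has no cut vertices, $\mathcal CW\setminus\{p\}$ is connected, so I would pick a simple edge-path $y=z_0,z_1,\dots,z_k=y'$ in it; each $z_j$ then lies in $S_1(x)\cap S_{r+1}\subseteq(S_{r+1}\cup S_{r+2})\cap B_2(x)$, and $i(z_j,z_{j+1})=2$ for every $j$ (adjacent Farey curves meet minimally, and intersection numbers in $W$ agree with those in $\Sigma$) --- but note $z_j,z_{j+1}$ are not adjacent in $\mathcal C\Sigma$, so each Farey edge must be inflated. For a fixed $j$, I would apply \cref{completeAPentagon} with the radius $r+1$ in place of $r$ and with $a_1=x,\ a_3=z_j,\ a_4=z_{j+1}$: the hypotheses $x\in S_{(r+1)-1}=S_r$, $z_j,z_{j+1}\in S_{r+1}\cap S_1(x)$, and $i(z_j,z_{j+1})=2$ all hold, producing $a_2,a_5\in S_{r+1}\cup S_{r+2}$ with $(x,a_2,z_j,z_{j+1},a_5)$ a pentagon. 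Traversing it as $(a_1,a_3,a_5,a_2,a_4)$ gives the $5$-cycle $x-z_j-a_5-a_2-z_{j+1}-x$ in $\mathcal C\Sigma$; deleting $x$ leaves the path $z_j-a_5-a_2-z_{j+1}$, whose interior vertices $a_5$ (adjacent to $z_j\in S_1(x)$) and $a_2$ (adjacent to $z_{j+1}\in S_1(x)$) both lie in $B_2(x)$ and in $S_{r+1}\cup S_{r+2}$. Concatenating these detours over $j=0,\dots,k-1$ yields a walk from $y$ to $y'$ inside $(S_{r+1}\cup S_{r+2})\cap B_2(x)$, from which a simple path is extracted.

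The one place that needs care is the pentagon step: one must identify $x$ with the ``$a_1$'' of a pentagon, align the cyclic labelling with the $5$-cycle traversal so that the arc avoiding $x$ is the one kept, and --- crucially --- shift the radius by one so that \cref{completeAPentagon} outputs curves in $S_{r+1}\cup S_{r+2}$ rather than in $S_r\cup S_{r+1}$. The forward-facing hypothesis is exactly what legalizes this shift: it forces the neighbours $z_j,z_{j+1}$ of $x$ (being distinct from $p$) into $S_{r+1}$, so $x$ really does occupy the ``one step closer than $S_{r+1}$'' slot. An alternative to the pentagon step would invoke the Bounded Geodesic Image Theorem (\cref{BGI}) for the subsurface $W$: the only curve of $\mathcal C\Sigma$ not cutting $W$ is $x$ itself, so any $v$ with $d_W(v,c)\ge M$ has every $c$--$v$ geodesic through $x$, hence $d(c,v)=r+d(x,v)$, forcing $v\in S_{r+1}\cup S_{r+2}$ whenever $v\in B_2(x)\setminus\{x\}$; but steering a whole path to keep its $W$-projection far from $c$ is fussier than quoting \cref{completeAPentagon}, so I would take the pentagon route.
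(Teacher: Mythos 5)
Your proof is correct, but note that this paper does not prove the lemma at all --- it is quoted from \cite{Wright1} (Lemma 6.14) and used as a black box. Your reconstruction (identify $S_1(x)$ with the Farey graph of the four-holed-sphere side of $\Sigma\setminus x$, use forward facing to delete the unique backtrack and stay in $S_{r+1}$, use that the Farey graph minus a vertex is connected, then inflate each Farey edge to a length-three path via the radius-shifted \cref{completeAPentagon}) is essentially Wright's original argument, so no further comparison is needed.
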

%\begin{proof}
 %Observe that $(\Sigma_{0,5} - \{x\})$ has two components, however one of the components is a pants which has empty curve graph. Thus the curve graph of $(\Sigma_{0,5} - \{x\})$ is the same as the curve graph of the non pants component of $(\Sigma_{0,5} - \{x\})$. In particular, $\mathcal{C}(\Sigma_{0,5} - \{x\}) \cong \mathcal{C}\Sigma_{0,4}$ which is the Farey graph. Since $x$ is forward facing, it is adjacent to a unique point $x' \in S_{r-1}$. Removing a single vertex does not disconnect the Farey graph, so we know there exists a path in the Farey graph:
    %$$
    %y = x_0, \dots, x_l = y'
    %$$
    %not passing through $x' \in S_{r-1}$. Each $x_j$ must be in $S_{r+1}$ since $x$ has no sidestepping. 
    %Recall that if two vertices $x_j, x_{j+1}$ are adjacent in the Farey graph, then $i(x_j, x_{j+1}) = 2$. Now applying lemma\todo{A Jul 27: Fix capitalization, everywhere in paper} \ref{completeAPentagon} to each $x_j$, we can find $y_j, z_j \in S_{r+1} \cup S_{r+2}$ such that $(x_j, y_j, z_j, x_{j+1}, x)$ is a 5-cycle. Now the path
    %$$
    %y = x_0, y_0, z_0, x_1, y_1, z_1, \dots, x_{l-1}, y_{l-1}, z_{l-1}, x_l = y'
    %$$
    %lies entirely in $(S_{r+1} \cup S_{r+2}) \cap B_2(a)$. 
%\end{proof}

Lemma \ref{6.14inPaper} gives us the following corollary.
\begin{corollary}\label{avoidS1}
Suppose $x_{j-1},x_j,x_{j+1}$ is a path in $S_1(a)\cup S_2(a)$ with $x_j\in S_1(a).$ Then there exists a path from $x_{j-1}$ to $x_{j+1}$ contained in $(S_2(a)\cup S_3(a))\cap B_2(x_j).$
\end{corollary}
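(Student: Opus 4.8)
The plan is to deduce this directly from \cref{6.14inPaper}, with the vertex $a$ playing the role of the basepoint. I would first remark that \cref{6.10inPaper} and \cref{6.14inPaper} hold for an arbitrary center, so there is no harm in applying them about $a$ rather than the globally fixed $c$, and I would write $S_i(a)$ throughout.

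Step one: since $x_{j-1}$ and $x_{j+1}$ are the neighbours of $x_j$ along the given path, we have $x_{j-1},x_{j+1}\in S_1(x_j)$. Step two: show $x_{j-1},x_{j+1}\in S_2(a)$. Each lies in $S_1(a)\cup S_2(a)$ by hypothesis, and neither can lie in $S_1(a)$: if, say, $x_{j-1}\in S_1(a)$, then $x_{j-1}$ would be a neighbour of $x_j$ inside $S_1(a)$, contradicting that $x_j$ has no sidestepping with respect to $a$ (this is part of \cref{6.10inPaper}, and also follows from $\mathcal{C}\Sigma_{0,5}$ having no $3$-cycle, since $a,x_{j-1},x_j$ would form one). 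Step three: by \cref{6.10inPaper}, $x_j\in S_1(a)$ is forward facing with respect to $a$. Now apply \cref{6.14inPaper} with center $a$, radius $r=1$, $x=x_j$, $y=x_{j-1}$, $y'=x_{j+1}$: steps one and two verify the hypotheses $y,y'\in S_1(x)\cap S_{r+1}$, and the conclusion is a path from $x_{j-1}$ to $x_{j+1}$ inside $(S_2(a)\cup S_3(a))\cap B_2(x_j)$, exactly as required. In the degenerate case $x_{j-1}=x_{j+1}$ the constant path works, since $x_{j-1}\in S_2(a)\cap B_1(x_j)\subseteq (S_2(a)\cup S_3(a))\cap B_2(x_j)$.

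I do not expect a genuine obstacle; the corollary is essentially a repackaging of \cref{6.14inPaper}. The only points needing a sentence of care are the shift of basepoint to $a$ and the observation that the endpoints $x_{j-1},x_{j+1}$ are forced out of $S_1(a)$ by the absence of triangles in the curve graph — both of which are immediate.
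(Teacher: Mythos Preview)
Your proof is correct and follows essentially the same route as the paper: both apply \cref{6.14inPaper} with center $a$, $r=1$, $x=x_j$, $y=x_{j-1}$, $y'=x_{j+1}$, verifying forward-facingness via \cref{6.10inPaper} and ruling out $x_{j\pm1}\in S_1(a)$ via the absence of triangles in $\mathcal{C}\Sigma_{0,5}$. Your additional remarks on the change of basepoint and the degenerate case are harmless elaborations.
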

\begin{proof}
    This statement is exactly the conclusion of \cref{6.14inPaper} with $a$ as the center, $x=x_j,$ $y=x_{j-1},$ and $y'=x_{j+1},$ so we only need to check the conditions are satisfied. 

    First we see $x_j$ is forward facing with respect to $a$ because $x_j\in S_1(a)$ by assumption and by \cref{6.10inPaper}, every vertex in $S_1(a)$ is forward facing with respect to $a$.

    Second we have $x_{j-1},x_{j+1}\in S_1(x_j)$ because $x_{j-1},x_j,x_{j+1}$ is a path by assumption.
    
    Third we observe $x_{j-1},x_{j+1}\in S_1(a)\cup S_2(a)$ and $S_1(a)$ is totally disconnected because $\mathcal{C}\Sigma_{0,5}$ has no triangles. Now $x_{j-1},x_{j+1}$ are adjacent to $x_j\in S_1(a).$ Thus, $x_{j-1},x_{j+1}\not\in S_1(a)$ so $x_{j-1},x_{j+1}\in S_2(a).$ This verifies the conditions of \cref{6.14inPaper}.
\end{proof}
Now we have the tools to construct the preliminary path as stated in \cref{modifiedPath}.
\begin{proof}[Proof of \cref{modifiedPath}]
By \cref{6.13inPaper}, $S_1(a)\cup S_2(a)$ is connected. Since $b,b'\in S_1(a)$ this implies there exists a path $b=x_0,...,x_l=b'$ contained in $S_1(a)\cup S_2(a).$ Now for each $x_j\in (S_{r-1}\cup S_{r})\cap S_1(a)$ replace the path segment $x_{j-1},x_j,x_{j+1}$ with the path $x_{j-1}=x_{j}^0,x_j^1,...,x_j^k=x_{j+1}$ for some $k\geq 0$ given by \cref{avoidS1}. Call this path $\gamma$. First we observe by construction that $\gamma$ has no vertex in $(S_{r-1}\cup S_r)\cap S_1(a).$

Now we check that $\gamma$ satisfies the conclusions of \cref{modifiedPath} with each following sublemma:

\begin{sublemma}\label{condition0}
    The path $\gamma$ is contained in $S_1(a)\cup S_2(a)\cup S_3(a).$
\end{sublemma}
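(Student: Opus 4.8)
\textbf{Proof proposal for Sublemma \ref{condition0}.}

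The plan is to trace through the construction of $\gamma$ and check vertex-by-vertex which spheres the vertices can land in. The path $\gamma$ is assembled from two kinds of pieces: (a) vertices $x_j$ inherited from the original path in $S_1(a) \cup S_2(a)$, and (b) vertices $x_j^m$ introduced by the replacement step from \cref{avoidS1}. For pieces of type (a), there is nothing to do: those vertices already lie in $S_1(a) \cup S_2(a) \subseteq S_1(a) \cup S_2(a) \cup S_3(a)$. So the whole content is controlling the inserted vertices of type (b).

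For a type (b) segment, recall that \cref{avoidS1} was applied to a subpath $x_{j-1}, x_j, x_{j+1}$ with $x_j \in S_1(a)$, and it produces a path from $x_{j-1}$ to $x_{j+1}$ contained in $(S_2(a) \cup S_3(a)) \cap B_2(x_j)$. In particular every inserted vertex lies in $S_2(a) \cup S_3(a)$. Hence every vertex of $\gamma$, of either type, lies in $S_1(a) \cup S_2(a) \cup S_3(a)$, which is exactly the claim. I would also note (for bookkeeping that feeds into the later sublemmas) that the endpoints $x_{j-1}, x_{j+1}$ of each replaced segment were shown in the proof of \cref{avoidS1} to lie in $S_2(a)$, so the splicing is consistent — though this is not strictly needed for \cref{condition0} itself.

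There is essentially no obstacle here: the sublemma is an immediate consequence of the range stated in \cref{avoidS1} together with the fact that the unmodified path already lived in $S_1(a) \cup S_2(a)$. The only point requiring a word of care is that the replacements are performed at every $x_j \in (S_{r-1} \cup S_r) \cap S_1(a)$ simultaneously, so one should check the replaced segments do not overlap badly; but since $S_1(a)$ is totally disconnected in $\mathcal{C}\Sigma_{0,5}$ (no triangles), no two consecutive vertices $x_j, x_{j+1}$ both lie in $S_1(a)$, so the segments $x_{j-1}, x_j, x_{j+1}$ being replaced are internally disjoint in their middle vertices and the concatenation is well-defined. With that observed, the containment follows.
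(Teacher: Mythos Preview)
Your proof is correct and follows exactly the same approach as the paper: vertices of $\gamma$ either come from the original path in $S_1(a)\cup S_2(a)$ or from the replacement segments supplied by \cref{avoidS1}, which lie in $S_2(a)\cup S_3(a)$. The paper compresses this into a single sentence, while you add the (correct but unnecessary for this sublemma) remarks about non-overlapping of replacements and the location of segment endpoints.
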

\begin{proof}
    By construction the vertices in $\gamma$ are either in $S_1(a)\cup S_2(a)$ or in $(S_2(a)\cup S_3(a))\cap B_2(x_j)$ for some $j\leq l.$
\end{proof}

\begin{sublemma}\label{condition1}
    If $v$ is a vertex in $\gamma$ and $v\in S_3(a),$ then $d(v,(S_{r-1}\cup S_r)\cap S_1(a))\leq 2.$
\end{sublemma}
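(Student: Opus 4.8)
The plan is to trace through the construction of $\gamma$ and argue that any vertex landing in $S_3(a)$ must have come from a path segment $x_j^0, \dots, x_j^k$ inserted by \cref{avoidS1}, since the original path $x_0, \dots, x_l$ lives in $S_1(a) \cup S_2(a)$ and therefore never touches $S_3(a)$. So suppose $v = x_j^i \in S_3(a)$ for some $j$ with $x_j \in (S_{r-1} \cup S_r) \cap S_1(a)$. The segment replacing $x_{j-1}, x_j, x_{j+1}$ is contained in $B_2(x_j)$ by \cref{avoidS1}, so $d(v, x_j) \leq 2$. Since $x_j \in (S_{r-1} \cup S_r) \cap S_1(a)$, this immediately gives $d(v, (S_{r-1} \cup S_r) \cap S_1(a)) \leq 2$, which is exactly the claim.

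First I would state that the vertices of $\gamma$ fall into two classes: those inherited from the original path, which lie in $S_1(a) \cup S_2(a)$ and hence not in $S_3(a)$; and those belonging to a replacement segment. So if $v \in S_3(a)$, then $v$ lies on some replacement segment $x_j^0, \dots, x_j^k$, associated to a specific vertex $x_j \in (S_{r-1} \cup S_r) \cap S_1(a)$. Next I would invoke the containment in $B_2(x_j)$ from \cref{avoidS1} to conclude $d(v, x_j) \leq 2$, and then observe $x_j$ is itself a witness in the set $(S_{r-1} \cup S_r) \cap S_1(a)$, so the distance to that set is at most $2$.

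One technical point worth addressing carefully is that the replacements are performed for \emph{each} $x_j \in (S_{r-1} \cup S_r) \cap S_1(a)$ simultaneously, so in principle a single vertex of $\gamma$ could be ambiguous about which segment it belongs to, or two consecutive replaced triples $x_{j-1}, x_j, x_{j+1}$ and $x_{j}, x_{j+1}, x_{j+2}$ could overlap. But this does not affect the argument: as long as $v$ lies on \emph{at least one} replacement segment for some qualifying $x_j$, we get the bound. If one wants to be scrupulous, one can note that a vertex $x_j \in S_1(a)$ cannot be adjacent in the original path to another vertex in $S_1(a)$ (since $S_1(a)$ is totally disconnected), so $x_{j-1}, x_{j+1} \in S_2(a)$ and any two such triples share at most an endpoint, which is in $S_2(a)$, not $S_3(a)$; hence the $S_3(a)$-vertices of $\gamma$ lie in the \emph{interior} of a unique replacement segment.

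I do not expect a serious obstacle here — the sublemma is essentially bookkeeping on the construction, and the only content is the $B_2(x_j)$ containment, which is handed to us by \cref{avoidS1}. The mild subtlety is making sure the case analysis "$v$ came from the original path" versus "$v$ came from a replacement" is exhaustive and that the replacement-segment containment is correctly attributed to a vertex $x_j$ actually lying in $(S_{r-1} \cup S_r) \cap S_1(a)$; both follow directly from how $\gamma$ was built.
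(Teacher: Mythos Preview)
Your proof is correct and follows essentially the same approach as the paper: both argue that a vertex in $S_3(a)$ must arise from a replacement segment associated to some $x_j\in(S_{r-1}\cup S_r)\cap S_1(a)$, and then use the $B_2(x_j)$ containment from \cref{avoidS1} to conclude. Your additional discussion of possible overlaps between replacement segments is extra care beyond what the paper includes, but harmless.
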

This establishes part (1) of \cref{modifiedPath}.
\begin{proof}
    The original path $b=x_0,...,x_l=b'$ is contained in $S_1(a)\cup S_2(a)$ so if $v\in S_3(a),$ then $v$ must have been obtained from replacing the segment $x_{j-1},x_j,x_{j+1}$ with the path $x_{j-1}=x_{j}^0,x_j^1,...,x_j^k=x_{j+1}$ for some $k\geq 0.$ In particular, $v=x_j^i$ for some $i\leq k.$ By \cref{avoidS1}, $v=x_j^i\in (S_{2}(a)\cup S_3(a))\cap B_2(x_j)$ so $d(v,x_j)\leq 2.$ Additionally, $x_j\in (S_{r-1}\cup S_r)\cap S_1(a).$ Thus $d(v,(S_{r-1}\cup S_r)\cap S_1(a))\leq 2.$ 
\end{proof}

\begin{sublemma}\label{condition2}
    The only vertices in $\gamma$ which are in $S_1(a)$ are also in $S_{r+1}.$
\end{sublemma}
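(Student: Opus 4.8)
The plan is to prove Sublemma~\ref{condition2} by tracking where the vertices of $\gamma$ that lie in $S_1(a)$ can come from, and combining this with the fact recorded just before the sublemmas: by construction, $\gamma$ has no vertex in $(S_{r-1}\cup S_r)\cap S_1(a)$. So if $v$ is a vertex of $\gamma$ with $v\in S_1(a)$, we must rule out $v\in S_{r-1}$ and $v\in S_r$, after which $d(v,a)=1$ together with $v\notin S_{r-1}\cup S_r$ forces $v\in S_{r+1}$ (since $|d(v,c)-d(a,c)|\leq d(v,a)=1$, the only other option is $v\in S_{r+1}$; note $a\in S_r$).

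First I would observe that the vertices $v$ of $\gamma$ lying in $S_1(a)$ can only be original vertices $x_j$ of the path $b=x_0,\dots,x_l=b'$: the replacement paths $x_{j}^0,\dots,x_j^k$ supplied by \cref{avoidS1} are contained in $(S_2(a)\cup S_3(a))\cap B_2(x_j)$, hence disjoint from $S_1(a)$. So every vertex of $\gamma$ in $S_1(a)$ is one of the $x_j$ that survived the replacement procedure. The replacement was performed precisely at those $x_j\in (S_{r-1}\cup S_r)\cap S_1(a)$; so a surviving $x_j\in S_1(a)$ satisfies $x_j\notin S_{r-1}\cup S_r$. Applying the dichotomy from the previous paragraph, $x_j\in S_{r+1}$, which is exactly the claim.

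I should double check two endpoint/edge cases so the argument is airtight. The endpoints $x_0=b$ and $x_l=b'$ lie in $S_{r+1}\cap S_1(a)$ by hypothesis of \cref{modifiedPath}, so they are never candidates for replacement and they already satisfy the conclusion; this is consistent and needs no special handling. The other point to be careful about is that \cref{avoidS1} is only invoked for triples $x_{j-1},x_j,x_{j+1}$ with $x_j\in S_1(a)$, and a priori several consecutive $x_j$ could lie in $S_1(a)$; but $S_1(a)$ is an independent set in $\mathcal{C}\Sigma_{0,5}$ (no triangles), so no two consecutive $x_j$ both lie in $S_1(a)$, and the replacements do not interact. This also guarantees that after replacement the only $S_1(a)$-vertices remaining are the ones that were never eligible, i.e.\ those in $S_{r+1}$.

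The main obstacle, such as it is, is purely bookkeeping: making sure that ``surviving vertex of $\gamma$ in $S_1(a)$'' is correctly identified with ``original $x_j\in S_1(a)$ that was not replaced'', and that the replacement was carried out at \emph{every} $x_j\in (S_{r-1}\cup S_r)\cap S_1(a)$ so that no such vertex sneaks through. Once that is pinned down, the conclusion is immediate from $d(v,a)=1$ and $v\notin S_{r-1}\cup S_r$. The proof is short; there is no serious geometric content beyond what \cref{avoidS1} already provides.
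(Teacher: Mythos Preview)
Your proof is correct and essentially identical to the paper's: both use that $\gamma$ has no vertex in $(S_{r-1}\cup S_r)\cap S_1(a)$ by construction, together with $a\in S_r$ and $d(v,a)=1$, to force $v\in S_{r+1}$. Your additional bookkeeping about original versus replacement vertices, the endpoints, and the independence of $S_1(a)$ simply spells out more carefully why the construction enjoys that property.
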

This establishes part (2) of \cref{modifiedPath}.
\begin{proof}
    Let $v$ be a vertex on $\gamma$ such that $v\in S_1(a).$ Now $a\in S_r$ so $v\in S_{r-1}\cup S_r\cup S_{r+1}.$ But by construction $\gamma$ has no vertices in $(S_{r-1}\cup S_r)\cap S_1(a)$ because any such vertices in the original path were replaced by a path in $S_2(a)\cup S_3(a).$ Thus, $v\in S_{r+1}.$ 
\end{proof}

Since we have verified the conclusions of \cref{modifiedPath} for arbitrary $a\in S_r$ and $b,b'\in S_1(a)\cap S_{r+1},$ this finishes the proof.
\end{proof}

%****                       ***%
%%%%%%%%%%% lifting lemma %%%%%%%%%%%%%%%
%%%%%%%%%%% lifting lemma %%%%%%%%%%%%%%%
%%%%%%%%%%% lifting lemma %%%%%%%%%%%%%%%
%****                       ***%

\subsection{Pushing the path up}\label{subsection:pushing_the_path_up}  

Now we will apply Dehn twists to the path obtained in \cref{modifiedPath} to make sure it lies in $S_r \cup S_{r + 1}$. 

We first fix some important notations.  

\begin{remark}\label{rem:rem_on_Dehn_twists} 
    Suppose $a,b \in \mathcal{C} \Sigma_{0,5}$. Let $T_a (b)$ denote the left Dehn twist of $b$ around $a$. Henceforth, we will refer to left Dehn twists as just Dehn twists. 
    
    In addition, we use $d_a$ to denote the distance between the projections to the curve graph of the annular subsurface associated to an element $a$ of $\mathcal{C} \Sigma_{0,5} $. 
\end{remark}

We will make use of the following basic fact. 
\begin{proposition}\label{prop:key_fact} 
Suppose $a,b$ are vertices in $\mathcal{C} \Sigma_{0,5} $ such that $d (a,b) \geq 2$. Then 
\begin{equation}
\lim_{N \to \infty} d_a (b, \dtwist{N}{a} (b))= \infty  . 
\end{equation} 
\end{proposition}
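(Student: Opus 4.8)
The plan is to compute the annular projection distance $d_a(b, \dtwist{N}{a}(b))$ directly in terms of $N$ and the geometric intersection number $i(a,b)$, and observe that this quantity grows linearly in $N$. Recall that the curve graph of an annulus $A$ with core $a$ has vertices given by arcs crossing $A$, considered up to isotopy fixing the endpoints, and that the left Dehn twist $T_a$ acts on this graph as a translation: for any arc $\delta$ crossing $A$, one has $d_{\mathcal{C}A}(\delta, T_a^N(\delta)) = |N| + O(1)$, with the error uniformly bounded (this is the standard computation that the subsurface projection to an annulus detects twisting, as in Masur--Minsky). Concretely, the projection $\rho_a(b)$ is obtained by lifting $b$ to the annular cover of $\Sigma$ determined by $a$ and taking the arcs; since $d(a,b)\geq 2$, the curve $b$ intersects $a$ essentially, so $b$ does cut the annulus and $\rho_a(b)$ is nonempty. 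Applying $\dtwist{N}{a}$ to $b$ has the effect of adding $N$ twists around $a$, which translates the lifted arcs by $N$ in the annular curve graph.

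First I would set up the annular cover $\widetilde{\Sigma}_a \to \Sigma$ associated to $a$, recall that $\mathcal{C}A$ is quasi-isometric to $\ints$ via the signed intersection/linking number of arcs, and record the formula $d_{\mathcal{C}A}(\rho_a(b), \rho_a(T_a^N b)) \geq |N| - C$ for a universal constant $C$. Then, since $d_a$ is by definition $\mathrm{diam}(\rho_a(b)\cup\rho_a(\dtwist{N}{a}(b)))$ and the projections have uniformly bounded diameter (fact (1) in Section 2), we get $d_a(b, \dtwist{N}{a}(b)) \geq d_{\mathcal{C}A}(\rho_a(b),\rho_a(\dtwist{N}{a}(b))) - O(1) \geq |N| - C'$, which tends to $\infty$ as $N \to \infty$. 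The hypothesis $d(a,b)\geq 2$ is used precisely to guarantee $b$ cuts the annular subsurface of $a$ (equivalently $i(a,b)>0$), so that all the projections involved are defined and nonempty; without it $\rho_a(b)$ could be empty and the statement would be vacuous or false.

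The main obstacle, such as it is, is bookkeeping rather than conceptual: one must be careful that $\dtwist{N}{a}(b)$ also cuts the annulus of $a$ (it does, since $T_a$ fixes $a$ and preserves $i(-,a)$, so $i(\dtwist{N}{a}(b), a) = i(b,a) > 0$), and one must cite or re-derive the precise statement that twisting $N$ times changes the annular projection coordinate by $N$ up to an additive error independent of $b$ and $N$. I would quote this from Masur--Minsky (the behavior of subsurface projection under Dehn twists in annular subsurfaces) rather than reprove it. Given that, the displayed limit follows immediately, and indeed one gets the stronger linear lower bound $d_a(b, \dtwist{N}{a}(b)) = N + O(1)$, which is what will actually be needed when combining this with the Bounded Geodesic Image Theorem (\cref{BGI}) in the subsequent argument.
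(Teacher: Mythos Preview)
The paper does not actually prove this proposition; it is simply labeled a ``basic fact'' and stated without argument. Your proposal is correct and is precisely the standard justification one would give: the hypothesis $d(a,b)\geq 2$ in $\mathcal{C}\Sigma_{0,5}$ forces $i(a,b)>0$ so that $\rho_a(b)$ is defined, and then the well-known computation that $T_a$ acts by translation on the annular curve graph gives $d_a(b,\dtwist{N}{a}(b)) = N + O(1)$. There is nothing to compare, since the paper offers no proof of its own.
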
    

\begin{lemma}\label{lem:cor_of_bgi} 
Suppose $a \in S_r$ and $d (b,a) \geq 2$. Then there exists a positive integer $N (a,b)$, such that for all $N ^\prime  \geq N (a,b)$, we have $d_a (\dtwist{N ^\prime }{a} (b),c) \gg M$.
\end{lemma}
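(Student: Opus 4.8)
The plan is to combine Proposition~\ref{prop:key_fact} with the triangle inequality for $d_a$. First I would observe that since $a \in S_r$ and $d(b,a) \geq 2$, Proposition~\ref{prop:key_fact} applies directly to the pair $(a,b)$, giving
\[
\lim_{N \to \infty} d_a\bigl(b, \dtwist{N}{a}(b)\bigr) = \infty.
\]
In particular, for any threshold we like — in particular for the threshold $M + d_a(b,c) + 1$ — there exists $N(a,b)$ so that $N' \geq N(a,b)$ implies $d_a(b, \dtwist{N'}{a}(b)) \geq M + d_a(b,c) + 1$.

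Next I would invoke the triangle inequality for $d_a$ (fact (3) in the list of properties of subsurface projections), which gives
\[
d_a\bigl(b, \dtwist{N'}{a}(b)\bigr) \leq d_a(b,c) + d_a\bigl(c, \dtwist{N'}{a}(b)\bigr),
\]
so that $d_a(\dtwist{N'}{a}(b), c) \geq d_a(b, \dtwist{N'}{a}(b)) - d_a(b,c) \geq M + 1 \gg M$ for all $N' \geq N(a,b)$. One should check that $d_a(b,c)$ is indeed a finite number, which holds because $c$ cuts the annular subsurface associated to $a$ — this is where $d(b,c)$ being finite, or more precisely $a$ not being disjoint from $c$, would need a brief remark; but since $a \in S_r$ with $r \geq 1$ we have $d(a,c) = r \geq 1$, and one only needs $c$ to actually cross $a$ for the projection $\rho_a(c)$ to be defined, which is automatic when $d(a,c) \geq 1$ in the non-annular sense — strictly, the annular projection $\rho_a(c)$ is defined precisely when $c$ intersects $a$ essentially, equivalently $d(a,c) \geq 2$ in $\mathcal{C}\Sigma$ fails only when $d(a,c)=1$, so I would instead note that $\rho_a(c)$ is defined whenever $i(a,c) \neq 0$, and handle (or note as automatic from $a \in S_r$, $r \geq 1$) the fact that $c$ crosses $a$.

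The only genuinely delicate point is the precise meaning of ``$\gg M$'' in the statement: this notation presumably means ``larger than some fixed multiple of $M$ sufficient for later applications of the Bounded Geodesic Image Theorem,'' and the argument above produces a lower bound that grows without bound in $N'$, so we can arrange $d_a(\dtwist{N'}{a}(b), c)$ to exceed \emph{any} prescribed constant, in particular whatever multiple of $M$ is needed downstream. I expect the main (very minor) obstacle to be bookkeeping around finiteness of $d_a(b,c)$ and making the target threshold explicit; the substantive content is entirely carried by Proposition~\ref{prop:key_fact}, and everything else is the triangle inequality.
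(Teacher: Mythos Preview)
Your proposal is correct and takes essentially the same approach as the paper: the paper's proof is the one-line triangle inequality $d_a(\dtwist{m}{a}(b),c)\geq d_a(\dtwist{m}{a}(b),b)-d_a(b,c)$ together with \cref{prop:key_fact}, exactly as you outline. Your additional care about the finiteness of $d_a(b,c)$ and the meaning of ``$\gg M$'' goes beyond what the paper spells out, but the substance is identical.
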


\begin{proof}
For all integers $m$, we have
\begin{equation}
d_a (\dtwist{m}{a} (b),c) \geq d_a (\dtwist{m}{a} (b),b) - d_a (b,c), 
\end{equation}
where $d_a (b,c)$ is a constant. Thus, the lemma follows from \cref{prop:key_fact}. 
\end{proof}

\begin{remark}
    For the rest of \cref{subsection:pushing_the_path_up}, we will continue to use $N (a,b)$ to denote the constant in \cref{lem:cor_of_bgi}. Note that $N(a,b)$ depends on $a,b$. 
\end{remark}

\begin{corollary}\label{lem:Dehn_twist_lifts} 
Suppose $a \in S_r$ and $d (b,a) \geq 2$. If $N ^\prime  \geq N (a,b)$, then 
\begin{equation}
d (c, \dtwist{N ^\prime }{a} (b)) \geq r. 
\end{equation}
\end{corollary}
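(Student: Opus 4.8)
The plan is to deduce \cref{lem:Dehn_twist_lifts} directly from the Bounded Geodesic Image Theorem (\cref{BGI}) together with \cref{lem:cor_of_bgi}. First I would observe that since $N' \geq N(a,b)$, \cref{lem:cor_of_bgi} gives $d_a(\dtwist{N'}{a}(b), c) \gg M$; in particular $d_a(\dtwist{N'}{a}(b), c) \geq M$, where $M$ is the uniform BGI constant fixed after \cref{BGI}. Here the relevant subsurface is the annular subsurface $U$ associated to the curve $a$, and $d_a$ is exactly the subsurface projection distance $d_U$ from Section~2; note $\dtwist{N'}{a}(b)$ still cuts this annulus since $d(a,b) \geq 2$ implies $i(a,b) > 0$ and Dehn twisting about $a$ does not change that, so the projection is defined.

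Next I would apply \cref{BGI} with this annular subsurface: every geodesic in $\mathcal{C}\Sigma_{0,5}$ from $\dtwist{N'}{a}(b)$ to $c$ must contain a vertex not cutting the annulus associated to $a$, i.e. a curve disjoint from $a$ (up to isotopy), which is to say a vertex adjacent to or equal to $a$ in $\mathcal{C}\Sigma_{0,5}$. Fix such a geodesic and let $w$ be such a vertex on it, so $d(w, a) \leq 1$. Then by the triangle inequality along this geodesic,
\begin{equation}
d(c, \dtwist{N'}{a}(b)) = d(c, w) + d(w, \dtwist{N'}{a}(b)) \geq d(c, w) \geq d(c,a) - d(a,w) \geq r - 1.
\end{equation}
This already gives $d(c, \dtwist{N'}{a}(b)) \geq r - 1$, but the claim asserts $\geq r$, so I need one more step to rule out the borderline case.

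To upgrade from $r-1$ to $r$, I would argue that $\dtwist{N'}{a}(b) \notin S_{r-1}(c)$. Suppose for contradiction $d(c, \dtwist{N'}{a}(b)) = r - 1$. Since $d(w,a) \le 1$ and $w$ lies on a geodesic from $c$ to $\dtwist{N'}{a}(b)$, the displayed inequality forces $d(c,w) = r-1$ and $d(w, \dtwist{N'}{a}(b)) = 0$, hence $w = \dtwist{N'}{a}(b)$ and so $d(\dtwist{N'}{a}(b), a) \leq 1$. But $d(\dtwist{N'}{a}(b), a) = d(b, a) \geq 2$ because Dehn twisting about $a$ fixes $a$ and hence is an isometry of $\mathcal{C}\Sigma$ preserving distance to $a$ — a contradiction. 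Therefore $d(c, \dtwist{N'}{a}(b)) \geq r$, as desired.

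The main obstacle I anticipate is making sure the identification between $d_a$ (the annular projection distance of \cref{rem:rem_on_Dehn_twists}) and the subsurface projection distance $d_U$ of \cref{BGI} is clean, and handling the edge case precisely — i.e. confirming that ``not cutting the annulus of $a$'' means ``disjoint from $a$,'' so that the BGI vertex $w$ genuinely satisfies $d(w,a) \le 1$, and that twisting is an isometry of the curve graph fixing $a$. Everything else is just the triangle inequality; the content is entirely in invoking \cref{BGI} with the large projection distance supplied by \cref{lem:cor_of_bgi}.
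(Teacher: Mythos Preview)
Your proof is correct and follows essentially the same approach as the paper's: invoke \cref{lem:cor_of_bgi} to get large annular projection distance, apply \cref{BGI} to force any geodesic from $\dtwist{N'}{a}(b)$ to $c$ through $B_1(a)$, and conclude using the triangle inequality. The paper's proof compresses the last step into a single sentence; your only detour is that you first obtain $d(c,\dtwist{N'}{a}(b)) \geq r-1$ and then argue by contradiction, whereas one can observe directly that $d(\dtwist{N'}{a}(b),a) = d(b,a) \geq 2$ forces the BGI vertex $w$ to satisfy $w \neq \dtwist{N'}{a}(b)$, hence $d(w,\dtwist{N'}{a}(b)) \geq 1$, giving $d(c,\dtwist{N'}{a}(b)) = d(c,w) + d(w,\dtwist{N'}{a}(b)) \geq (r-1)+1 = r$ in one line.
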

\begin{proof}
By \cref{lem:cor_of_bgi} and \cref{BGI}, any geodesic from $\dtwist{N ^\prime }{a} (b)$ to $c$ must contain a vertex that lies in $B_1 (a)$. This implies that $d (\dtwist{N ^\prime }{a} (b),c) \geq r$. 
\end{proof}
%\todo{A Jul 24: I think this second paragraph is a bit strange, since it's already implicit in the previous lemma that $d(\dtwist{N}{a} (b),a) \geq 2$, since otherwise the subsurface projection wouldn't be defined. So I recommend deleting this paragraph and replacing ``This immediately implies that $d (\dtwist{N}{a} (b),c) \geq r - 1$" with ``This  implies that $d (\dtwist{N}{a} (b),c) \geq r$". M Jul 25, Resolved.}

%****                       ***%
%%%%%%%%%%% lemma6.16 %%%%%%%%%%%%%%%
%%%%%%%%%%% lemma6.16 %%%%%%%%%%%%%%%
%%%%%%%%%%% lemma6.16 %%%%%%%%%%%%%%%
%****                       ***%

\subsection{Main lemma} 

%%%%%%%%%%% lem: lemma6.16 %%%%%%%%%%%%%%%
\begin{lemma}\label{lem:crucial_lemma}
Suppose $a \in S_r$ and $b,b ^\prime \in S_{ r + 1}\cap S_1 (a)$. Then there exists a path $b,x_1 , \ldots ,x_{ l }, b ^\prime$ with four properties: 
\begin{enumerate}
\item $1 \leq d (x_i, a) \leq 3$. 
\item $r \leq d (x_i, c) \leq r + 2$. 
\item If $d (x_i,c ) = r$, then $d (x_i,a) = 2$, there exists a unique vertex $z$ adjacent to both $x_i$ and $a$, $z \in S_{r - 1}$, and $z$ is the unique backtrack of $x_i$. 
\item If $d (x_i,c) = r$ and if $a$ has unique backtracking, then $x_i$ has no sidestepping. 
\end{enumerate}
\end{lemma}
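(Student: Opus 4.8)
The plan is to start from the preliminary path $\gamma$ furnished by \cref{modifiedPath}, which connects $b$ to $b'$ inside $S_1(a)\cup S_2(a)\cup S_3(a)$, and then fix up its ``bad'' portions — those vertices with $d(x_i,c)$ not in $\{r,r+1\}$ or violating (3)–(4) — by applying large powers of the Dehn twist $T_a$. First I would observe that, by property (1) of \cref{modifiedPath}, any vertex $v$ of $\gamma$ that lies in $S_3(a)$ satisfies $d(v,(S_{r-1}\cup S_r)\cap S_1(a))\le 2$; together with the triangle inequality and $a\in S_r$ this pins $d(v,c)$ into a narrow window, so the main danger is a vertex $v\in S_3(a)$ with $d(v,c)=r-1$ or $r-2$, or a vertex with $d(v,c)$ too large. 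I would isolate the maximal sub-segments of $\gamma$ on which something goes wrong and show each such sub-segment is contained in a ball $B_2(x_j)$ for some $x_j\in (S_{r-1}\cup S_r)\cap S_1(a)$ — this is exactly the structure coming out of the replacement step in the proof of \cref{modifiedPath} via \cref{avoidS1}.

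Next, for each such offending sub-segment, the idea is to replace it by its image under $T_a^{N'}$ for $N'$ a common large twist power (larger than all the finitely many $N(a,\cdot)$ from \cref{lem:cor_of_bgi}). Since $a$ is disjoint from none of these endpoints in the relevant way — more precisely each relevant vertex $v$ has $d(v,a)\ge 2$ because it lies in $S_2(a)\cup S_3(a)$ — \cref{lem:Dehn_twist_lifts} gives $d(c,T_a^{N'}(v))\ge r$, so twisting pushes everything out to distance at least $r$ from $c$, killing the $r-1,r-2$ problem. For the upper bound $d(x_i,c)\le r+2$ one uses that $T_a$ fixes $B_1(a)$ pointwise (the twist is supported in the annulus around $a$), so a twisted vertex stays within distance $2$ of $a$'s link and hence within $r+2$ of $c$; combined with $d(x_i,a)\le 3$ (which survives because $T_a$ moves things by at most a bounded amount in $d(\cdot,a)$, or rather because one re-routes through $B_1(a)$) we get property (1). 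Property (2) then follows by collecting these bounds.

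For properties (3) and (4) — the local structure at the vertices $x_i$ realizing $d(x_i,c)=r$ — I would argue as follows. After twisting, any $x_i$ with $d(x_i,c)=r$ must lie in $S_1(a)\cup S_2(a)$ (it cannot be in $S_3(a)$ since property (1) of \cref{modifiedPath} plus $a\in S_r$ would force $d(x_i,c)\le r-1$ there... actually one must check this more carefully), and since $S_{r+1}\ni b,b'$ while $x_i\in S_r$, the vertex $x_i$ was introduced by the replacement, so one knows it sits in $B_2$ of some $S_1(a)$-vertex. The claim that $x_i$ has $d(x_i,a)=2$ with a unique common neighbour $z\in S_{r-1}$ serving as its unique backtrack should come from a Farey/pentagon count in $\mathcal C\Sigma_{0,5}$: a curve at distance $2$ from $a$ and distance $r$ from $c$, where $a\in S_r$, must reach $c$ through a vertex of $S_1(a)\cap S_{r-1}$, and uniqueness of $z$ plus no-triangles forces $z$ to be the unique backtrack. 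Property (4) is a compatibility statement: if $a$ itself backtracks uniquely, a sidestep of $x_i$ inside $S_r$ would, via the pentagon structure, produce a second backtrack for $a$ or a short cycle, a contradiction.

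The main obstacle I anticipate is bookkeeping the interaction between the two incompatible goals at the twisted vertices: twisting by a huge power $N'$ is what guarantees $d(\cdot,c)\ge r$ (lower bound, via BGI), but it is precisely the twisted curves that threaten to wander far from $a$ and spoil the $d(x_i,a)\le 3$ and $d(x_i,c)\le r+2$ upper bounds, so one needs the fact that $T_a$ is supported near $a$ and acts trivially on $B_1(a)$ to control the upper side — and then one must verify the fine local picture (3)–(4) holds for the *new* vertices, which the old path's construction did not directly guarantee. Making the pentagon/Farey-graph argument for (3) genuinely rigorous, rather than hand-wavy, is where the real work lies; I would likely need an auxiliary lemma characterizing which $x$ with $d(x,a)=2$ can have $d(x,c)=r$ when $a\in S_r$, analogous to \cite[Lemma 6.16]{Wright1}.
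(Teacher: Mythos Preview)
Your overall strategy---start from the path of \cref{modifiedPath} and twist by a large power of $T_a$---is exactly right, but the execution diverges from the paper's in two places and one of them is a genuine gap.

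First, there is no need to isolate ``offending sub-segments'' and twist only those. The paper simply applies $T_a^N$ to the \emph{entire} path $\alpha = (b, y_1,\dots,y_l,b')$. This works cleanly because $T_a$ is an isometry of $\mathcal{C}\Sigma$ fixing $a$ and fixing $B_1(a)$ pointwise: hence $d(T_a^N(y_i),a)=d(y_i,a)$ (so property (1) is immediate---your worry that ``$T_a$ moves things by at most a bounded amount in $d(\cdot,a)$'' is misplaced, the distance to $a$ is preserved exactly) and the endpoints $b,b'\in S_1(a)$ are fixed. Your selective twisting would also require checking that the twisted sub-segment rejoins the untwisted path, an issue you do not address.

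Second, and more importantly, your plan for (3) and (4) via a ``Farey/pentagon count'' misses the actual mechanism. The key point is that after twisting, every $x_i=T_a^N(y_i)$ with $d(y_i,a)\ge 2$ satisfies $d_a(x_i,c)\gg M$ by the choice of $N$ (\cref{lem:cor_of_bgi}), so BGI applies directly to $x_i$: every geodesic from $x_i$ to $c$ must pass through $B_1(a)$. From this, (3) falls out immediately: if $d(x_i,c)=r$ and $d(x_i,a)\ge 2$, the $B_1(a)$-vertex $z$ on such a geodesic lies in $S_{r-1}$, forcing $d(x_i,a)=2$; uniqueness of $z$ is the absence of quadrilaterals; and any other backtrack $z'$ of $x_i$ begins a geodesic to $c$ which again must hit $B_1(a)$, so $z'\in B_1(a)\cap S_{r-1}$, whence $z'=z$. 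For (4), if $s$ were a sidestep of $x_i$ then $d_a(s,c)\gg M$ by the triangle inequality in $d_a$ (since $d_a(x_i,s)$ is bounded), so BGI forces geodesics from $s$ through $B_1(a)\cap S_{r-1}$, which is $\{z\}$ by unique backtracking of $a$; but then $s,z,x_i$ is a triangle. No pentagon combinatorics or auxiliary characterization lemma is needed---BGI on the twisted vertices does all the work.
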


\begin{remark}
    This lemma improves \cite[Lemma 6.16]{Wright1} in that our lemma also shows that $d (x_i,c) \leq r + 2$. 
\end{remark}

\begin{proof}
We first construct a path and then prove that it satisfies the four listed properties. 

We begin by considering the path $\alpha$ that \cref{modifiedPath} gives us. Let $b, y_1 , \ldots ,y_l, b ^\prime $ be the vertices of the path. By \cref{lem:cor_of_bgi}, for all $i$ such that $d (y_i,a ) \geq 2$, there exists a positive integer $N (a,y_i)$ such that if $N ^\prime  \geq N (a, y_i)$, then $d_a (\dtwist{N ^\prime }{a} (y_i), c) \gg M$. Take $N = \max_i (N (y_i,a))$. Let $\gamma$ be the path obtained by applying $\dtwist{N}{a}$ to $\alpha$. The vertices of $\gamma$ are then
\begin{equation}
    b, \dtwist{N}{a} (y_1) , \ldots ,\dtwist{N}{a}(y_{l}), b ^\prime. 
\end{equation}
Let $x_i = \dtwist{N}{a} (y_i)$ for all $1 \leq i \leq l$.  

Proposition \ref{modifiedPath}, as well as the fact that Dehn twists preserve distance (\cref{rem:rem_on_Dehn_twists}), verifies property (1) above. 

Now we verify property (2). We first claim that for all $i$, $d (x_i, c) \geq r$. Let us fix some $i$. If $d (y_i,a) \geq 2$, then \cref{lem:Dehn_twist_lifts} implies that $d(x_i, c) = d (\dtwist{N}{a} (y_i), c) \geq r$. On the other hand, $d(y_i,a)\geq 1$ by construction of $\alpha.$ So the only remaining case to consider is if $d (y_i,a ) = 1$, then the assumptions on the path $\alpha$ imply that $y_i \in S_{ r + 1}$. So $d (x_i, c ) = d (T^N_a (y_i),c)= d (y_i,c) \geq r$. 

Next, we claim that for all $i$, $d (x_i, c) \leq r + 2$. This follows from the observation that if $y_i \in S_3 (a)$, then by assumptions on the path $\alpha$, there exists $z_i \in S_1 (a) \cap  (S_r \cup S_{ r - 1})$ such that $d (y_i, z_i) \leq 2$. But since Dehn twists preserve distances and fix vertices adjacent to the center of the twist, 
\begin{equation}
    d (x_i, z_i) = d (\dtwist{N}{a} (y_i), \dtwist{N}{a}  (z_i))  = d (y_i, z_i ). 
\end{equation} 
And so $d (x_i, z_i)\leq 2$. So 
\begin{equation}
    d (x_i,c) \leq d (x_i, z_i) + d (c, z_i) \leq r + 2. 
\end{equation}
This finishes the verification of property (2). 

To verify property (3), we suppose $d (x_i,c) = r$. Recall that by definition, $x_i = \dtwist{N}{a} (y_i)$. If $d (y_i , a ) = 1$, then by construction of $\alpha$, we have $y_i \in S_{r + 1}$. Since $\dtwist{N}{a}$ fixes $y_i$, we conclude that $x_i = \dtwist{N}{a} (y_i)$ belongs to $ S_{r + 1}$. This contradicts the assumption that $d (x_i,c) = r$. So we must have $d (y_i,a) \geq 2$. 

And so by \cref{lem:cor_of_bgi} and \cref{BGI}, every geodesic from $x_i = \dtwist{N}{a} (y_i)$ to $c$ must pass through $B_1 (a)$. Let $\zeta$ be one such geodesic and $z$ be one vertex in $\zeta \cap  B_1 (a)$. Since $d (x_i,a) \geq 2$, $z $ must belong to $ S_{ r - 1}$, implying that $d (x_i,a)= 2$. By construction, $z$ is a vertex adjacent to both $x_i$ and $z$. It is the unique such vertex because $\mathcal{C} \Sigma_{0,5} $ has no quadrilaterals. 

To finish verifying property (3), it remains to show that $z$ is the unique backtrack of $x_i$. Let $z ^\prime $ be a backtrack of $x_i$. There is a geodesic $\tilde{\zeta}$ connecting $z$ to $c$ that passes through $z ^\prime $. By the Bounded Geodesic Image Theorem, $\tilde{\zeta}$ must intersect $B_1 (a)$. Since $z ^\prime \in S_{r - 1}$, $z ^\prime $ must in fact belong to $B_1 (a)$. Because $\Sigma_{0,5} $ has no quadrilaterals, $z$ and $z ^\prime $ must coincide. This verifies property (3). 

To verify property (4), assume $a$ has unique backtracking and $x_i \in S_r$. Suppose for the sake of contradiction that $s$ is a sidestep of $x_i$. We note that $s$ is not adjacent to $a$ because otherwise $x_i,s,a,z$ would form a quadrilateral, a contradiction. $s$ is also not equal to $a$, since otherwise $x_i,a,z$ form a triangle, a contradiction.  

Let $z$ be the unique neighbor of $x_i$ and $a$ constructed during the verification of property (3). During the verification of property (3), we proved that $d (y_i,a ) \geq 2$. So by \cref{lem:cor_of_bgi}, $d_a (x_i,c) \gg M$. Additionally, since $d (x_i, s)=1$, by the coarse-Lipschitz property of $d_a$, we have $d_a (x_i,s)$ is bounded. So by the triangle inequality, $d_a (s,c) \gg M$. By \cref{BGI}, we know that every geodesic from $s$ to $c$ passes through $B_1 (a)$. %\todo{A Jul 27: I think this comment needs to go before the claim $d_a (x_i,c) \gg M$, since you can't apply the coarse lipschitz property if the subsurface projection isn't defined. Edit carefully and then make sure the whole paragraph reads well and makes sense. Resolved.}  

Let $\eta$ be one such geodesic. Since $s \in S_r$ and $s$ is not adjacent or equal to $a$, we have $\eta \cap B_1 (a) \subset S_{r - 1}$. But since $a$ has unique backtracking, the only vertex in $B_1 (a) \cap S_{r - 1}$ is $z$. This shows that $\eta $ must pass through $z$. But then $s,z,x_i$ form a triangle, a contradiction. This proves property (4).
\end{proof}

%****                       ***%
%%%%%%%%%%% finishing up low complexity %%%%%%%%%%%%%%%
%%%%%%%%%%% finishing up low complexity %%%%%%%%%%%%%%%
%%%%%%%%%%% finishing up low complexity %%%%%%%%%%%%%%%
%****                       ***%

\subsection{Proving \cref{mainPropLowComp}} Before we begin the proof of \cref{mainPropLowComp}, we will need to make use of the following lemmas. 

\begin{lemma}\label{pushUpUBT}
    Suppose $a \in S_r$ has unique backtracking and $b, b' \in S_{r+1}$ are both adjacent to $a$. Then there exists a path from $b$ to $b'$ entirely in $(S_{r+1} \cup S_{r+2}) \cap B_4(a)$.
\end{lemma}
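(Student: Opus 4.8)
The plan is to combine \cref{lem:crucial_lemma} with an iterated application of \cref{lem:Dehn_twist_lifts} (equivalently \cref{lem:cor_of_bgi} plus \cref{BGI}), using the hypothesis that $a$ has unique backtracking to control the one dangerous case. First I would apply \cref{lem:crucial_lemma} to $a$, $b$, $b'$ to obtain a path $b, x_1, \ldots, x_l, b'$ satisfying properties (1)--(4) of that lemma. Property (1) already gives $1 \leq d(x_i,a) \leq 3$, so the path lies in $B_3(a)$; property (2) gives $r \leq d(x_i,c) \leq r+2$. The only obstruction to the desired conclusion is the possibility that some $x_i$ lies in $S_r$ rather than $S_{r+1} \cup S_{r+2}$, and that some interior vertex lies in $B_3(a) \setminus B_4(a)$ --- but the latter cannot happen since $d(x_i,a)\le 3 < 4$, so the real task is to eliminate the vertices $x_i \in S_r$.

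So the key step is a local surgery around each bad vertex $x_i \in S_r$. By property (3), for such an $x_i$ we have $d(x_i,a) = 2$ with a unique common neighbour $z \in S_{r-1}$; and by property (4), since $a$ has unique backtracking, $x_i$ has no sidestepping, i.e. $x_i$ is forward facing with respect to $c$ (unique backtrack $z$, no sidestep). The idea is then to detour around $x_i$: its neighbours $x_{i-1}$ and $x_{i+1}$ on the path both lie in $B_3(a) \cap S_1(x_i)$, and I want to connect them by a path avoiding $S_r$. Since $x_i$ is forward facing, \cref{6.14inPaper} (with center $x_i$) would give a path from $x_{i-1}$ to $x_{i+1}$ inside $(S_{r+1}\cup S_{r+2})\cap B_2(x_i)$ --- provided $x_{i-1},x_{i+1} \in S_{r+1}\cap S_1(x_i)$. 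To invoke it I need to check $x_{i-1},x_{i+1}\in S_{r+1}$: by property (2) they are in $S_r\cup S_{r+1}\cup S_{r+2}$, and they are not in $S_r$... this is exactly where care is needed, since consecutive bad vertices could occur.

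To handle that cleanly I would first argue that no two consecutive $x_i, x_{i+1}$ both lie in $S_r$: by property (3) each such vertex has a unique backtrack in $S_{r-1}$ and satisfies $d(\cdot,a)=2$, and if both $x_i,x_{i+1}\in S_r$ were adjacent they would be sidesteps of each other, contradicting property (4) (since $a$ has unique backtracking). Hence the bad vertices are isolated along the path, so for each bad $x_i$ the neighbours $x_{i-1},x_{i+1}$ are not in $S_r$, hence in $S_{r+1}\cup S_{r+2}$. To apply \cref{6.14inPaper} I actually need them in $S_{r+1}$; for a neighbour of $x_i\in S_r$ that is not in $S_{r-1}$ (it cannot be, as $x_{i-1}\ne z$ since $z\in S_{r-1}$ is adjacent to $a$ while... ) we get it is in $S_r\cup S_{r+1}$, and having excluded $S_r$ it lies in $S_{r+1}$. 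Then \cref{6.14inPaper} with center $x_i$ replaces the segment $x_{i-1},x_i,x_{i+1}$ with a path in $(S_{r+1}\cup S_{r+2})\cap B_2(x_i)$; since $d(x_i,a)\le 2$, triangle inequality gives $B_2(x_i)\subset B_4(a)$, so the replacement stays in $(S_{r+1}\cup S_{r+2})\cap B_4(a)$.

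Carrying out these replacements for every bad $x_i$ yields a path from $b$ to $b'$ all of whose vertices lie in $(S_{r+1}\cup S_{r+2})\cap B_4(a)$: the untouched original vertices are in $S_{r+1}\cup S_{r+2}$ by property (2) together with the surgery removing all $S_r$-vertices, and they lie in $B_3(a)\subset B_4(a)$ by property (1); the inserted vertices lie in $(S_{r+1}\cup S_{r+2})\cap B_4(a)$ as just argued. I expect the main obstacle to be the bookkeeping around consecutive bad vertices and confirming that every neighbour of a bad vertex that gets used as an endpoint of a detour genuinely lies in $S_{r+1}$ (not $S_{r-1}$ and not $S_r$) --- this is where properties (3) and (4) of \cref{lem:crucial_lemma} and the unique backtracking hypothesis on $a$ must be used together, and it is worth isolating as a short claim before assembling the final path.
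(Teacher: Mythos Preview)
Your proposal is correct and follows essentially the same approach as the paper's proof: apply \cref{lem:crucial_lemma}, observe via properties (3) and (4) that each $S_r$-vertex is forward facing with $d(x_i,a)=2$ (hence such vertices are isolated on the path), and then invoke \cref{6.14inPaper} at each such vertex to detour into $(S_{r+1}\cup S_{r+2})\cap B_2(x_i)\subset B_4(a)$. Your concern about neighbours possibly lying in $S_{r-1}$ is moot since property (2) already forces $d(x_{i\pm 1},c)\geq r$, so the bookkeeping you flag is straightforward.
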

\begin{proof}
    Consider the path from $b$ to $b'$ given by \cref{lem:crucial_lemma}. Each vertex on this path that lies in $S_r$ is forward facing and also in $B_2(a)$. Forward facing vertices have no side stepping, so this path has no adjacent vertices in $S_r$. Thus we can apply \cref{6.14inPaper} to each vertex in $S_r$ to obtain the appropriate path in $(S_{r+1} \cup S_{r+2}) \cap B_4(a).$
\end{proof}

\begin{lemma}\label{pushUpA} Suppose $a \in S_r$ and $b, b' \in S_{r+1}$ are both adjacent to $a$. Then there exists a path from $b$ to $b'$ entirely in $(S_{r+1} \cup S_{r+2}) \cap B_6(a)$.
\end{lemma}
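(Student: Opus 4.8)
The plan is to reduce \cref{pushUpA} to \cref{pushUpUBT} by handling the one case that \cref{pushUpUBT} does not cover, namely when $a$ does not have unique backtracking. First I would apply \cref{lem:crucial_lemma} to $a, b, b'$ to obtain a path $b = x_0, x_1, \ldots, x_l, b' $ with $1 \leq d(x_i, a) \leq 3$ and $r \leq d(x_i, c) \leq r+2$. The goal is to modify this path so that it avoids $S_r$ entirely, at which point it will lie in $(S_{r+1} \cup S_{r+2})$, and a bookkeeping argument on the radii $d(x_i,a)$ will keep everything inside $B_6(a)$.

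The key observation is property (3) of \cref{lem:crucial_lemma}: whenever $x_i \in S_r$, we have $d(x_i, a) = 2$, there is a unique vertex $z_i$ adjacent to both $x_i$ and $a$, and $z_i \in S_{r-1}$ is the unique backtrack of $x_i$. In particular $x_i \in S_2(a)$ and $x_i$ has unique backtracking with respect to $a$ (its unique $S_{r-1}$-neighbor near $a$ is $z_i$; but I should be careful: unique backtracking of $x_i$ as a vertex of $S_r(c)$ means a unique neighbor in $S_{r-1}(c)$, which is exactly what property (3) asserts). So each such $x_i$ is a forward-facing vertex of $S_r(c)$ lying in $B_2(a)$, provided it also has no sidestepping. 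If $a$ has unique backtracking, property (4) gives no sidestepping and we are exactly in the situation of \cref{pushUpUBT} — so that case is done. The remaining case is when $a$ fails to have unique backtracking, and here I would argue that we can still invoke \cref{6.14inPaper} at each $x_i \in S_r$: we need $x_i$ forward facing with respect to $a$ and $x_{i-1}, x_{i+1} \in S_1(x_i) \cap S_{r+1}$. Forward-facing-ness with respect to $a$: $x_i$ has unique backtracking toward $a$ (it has the unique neighbor $z_i$ in $S_{r-1}(a)$, since $d(x_i,a)=2$ and $\mathcal{C}\Sigma_{0,5}$ has no quadrilaterals, so at most one common neighbor of $x_i$ and $a$), and $x_i$ has no sidestepping with respect to $a$ because a neighbor of $x_i$ in $S_2(a)$ together with $x_i, z_i, a$ would create a quadrilateral. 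So $x_i$ is forward facing with respect to $a$ regardless of whether $a$ has unique backtracking.

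With that in hand, the construction is: for each index $i$ with $x_i \in S_r(c)$ — note such $i$ are non-adjacent since $x_i$ has no sidestepping in $S_r(c)$, which follows once we know these are isolated-in-$S_r$ type vertices; actually I should get "no $S_r(c)$-sidestepping" from the analysis, or simply note that two consecutive path vertices both in $S_r(c)$ would be adjacent in $S_r(c)$, contradicting that each is forward facing in the sense of \cref{6.14inPaper}'s hypothesis ... let me instead just say: replace the segment $x_{i-1}, x_i, x_{i+1}$ by a path from $x_{i-1}$ to $x_{i+1}$ in $(S_{r+1}(c) \cup S_{r+2}(c)) \cap B_2(x_i)$ given by \cref{6.14inPaper} applied with center $c$, forward-facing vertex $x_i$, and endpoints $x_{i-1}, x_{i+1} \in S_{r+1}(c) \cap S_1(x_i)$ (the endpoints lie in $S_{r+1}(c)$ since they are the neighbors of $x_i \in S_r(c)$ other than $z_i$, and by property (2) they are not in $S_{r-1}(c)$ because that would force adjacency to $x_i$ of another $S_{r-1}$ vertex and hence... here I would appeal to the no-quadrilateral condition again, or more cleanly observe $x_{i-1},x_{i+1}$ cannot be the backtrack $z_i$ as $z_i \in S_{r-1}$ and consecutive path vertices near $x_i$ would need $d \geq r$). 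Since $d(x_i, a) = 2$, the inserted vertices lie in $B_2(x_i) \subseteq B_4(a)$. All other original vertices $x_j$ (with $x_j \notin S_r(c)$) already satisfy $x_j \in S_{r+1}(c) \cup S_{r+2}(c)$ and $d(x_j, a) \leq 3 \leq 6$. Concatenating gives a path from $b$ to $b'$ in $(S_{r+1}(c) \cup S_{r+2}(c)) \cap B_6(a)$, with room to spare.

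The main obstacle I anticipate is the careful verification that the endpoints $x_{i-1}, x_{i+1}$ of each replaced segment genuinely lie in $S_{r+1}(c) \cap S_1(x_i)$ rather than, say, being the backtrack vertex $z_i \in S_{r-1}(c)$ or lying in $S_r(c)$ themselves; this requires combining property (2) of \cref{lem:crucial_lemma} (so neighbors of $x_i$ have $c$-distance in $\{r-1, r, r+1\}$), property (3) (the unique $S_{r-1}$-neighbor near $a$ is $z_i$, and $z_i$ is the \emph{unique} backtrack so any other neighbor of $x_i$ is in $S_r \cup S_{r+1}$), and the non-adjacency of the $x_i \in S_r(c)$ along the path (no $S_r$-sidestepping of $x_i$, which itself needs a short no-quadrilateral argument or can be extracted from forward-facingness). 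Once these local checks are in place, the distance bookkeeping to stay within $B_6(a)$ is immediate and loose.
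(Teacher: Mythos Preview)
Your overall plan — take the path from \cref{lem:crucial_lemma} and, at each vertex $x_i\in S_r$, replace the segment $x_{i-1},x_i,x_{i+1}$ by a detour in $S_{r+1}\cup S_{r+2}$ — is exactly the paper's strategy. But there is a genuine gap in your execution, and it is precisely the ``main obstacle'' you flag at the end without resolving.

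The issue is that when $a$ does \emph{not} have unique backtracking, property (4) of \cref{lem:crucial_lemma} is unavailable, so you cannot conclude that a vertex $x_i\in S_r$ has no sidestepping \emph{with respect to $c$}. In particular, two consecutive path vertices $x_i,x_{i+1}$ may both lie in $S_r$, and then $x_{i+1}\notin S_{r+1}\cap S_1(x_i)$, so neither \cref{6.14inPaper} nor \cref{pushUpUBT} applies at $x_i$. Your attempt to sidestep this by proving $x_i$ is forward facing \emph{with respect to $a$} does not work: the claimed ``no-quadrilateral'' argument is wrong, because a neighbour $w$ of $x_i$ with $d(w,a)=2$ is not adjacent to $a$, so $w,x_i,z_i,a$ is just a path of length $3$, not a $4$-cycle. (Indeed, any pentagon through $a$ and $x_i$ produces such a $w$.) And even if that claim were true, forward-facingness with respect to $a$ is not what \cref{6.14inPaper} with center $c$ requires.

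The paper fixes exactly this gap with one extra step you omit: before detouring around the $S_r$ vertices, it applies \cref{6.5inPaper} to each adjacent pair $x_i,x_{i+1}\in S_r$, inserting a short pentagon path through $S_{r+1}\cup S_{r+2}$ between them. After this separation the path lies in $B_4(a)$, no two consecutive vertices lie in $S_r$, and hence the path-neighbours of any remaining $x_i\in S_r$ are forced into $S_{r+1}$. At that point the paper invokes \cref{pushUpUBT} (which needs only unique backtracking, guaranteed by property (3), not full forward-facingness) at each such $x_i\in B_2(a)$, landing in $B_4(x_i)\subset B_6(a)$. If you insert this separation step, your argument goes through.
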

\begin{proof}
     \cref{lem:crucial_lemma} gives a path from $b$ to $b'$ in $(S_r \cup S_{r+1} \cup S_{r+2}) \cap B_3(a)$ such that each vertex on this path that lies in $S_r$ has unique backtracking and is in $B_2(a)$. By \cref{6.5inPaper} we can modify the path at each pair of adjacent vertices that lie in $S_r$ to obtain a new path in $(S_r \cup S_{r+1} \cup S_{r+2}) \cap B_4(a)$ with the additional assumption that no two adjacent vertices are in $S_r$. Now we can apply \cref{pushUpUBT} to each vertex in $S_r$ to obtain the appropriate path in $(S_{r+1} \cup S_{r+2}) \cap B_6(a)$. 
\end{proof}

Next, we want to recall \cite[Lemma 2.1]{Wright1} for the sufficient conditions for connectivity of spheres:
\begin{lemma}\label{suffCondConnSpheres}\cite[Lemma 2.1]{Wright1}
    Let $\Gamma$ be an arbitrary graph and fix $c \in \Gamma$. Fix $w > 0$, and let $r > 0$ be arbitrary. Suppose the following conditions hold:
    \begin{itemize}
        \item[(1)] For every $z \in S_r(c)$ and $x, y \in S_{r+1}(c) \cap B_1(z)$ there exists a path 
        $$
        x = x_0, x_1, \dots, x_l = y
        $$
        with
        $$
        x_i \in S_{r+1}(c) \cup \dots \cup S_{r+w}(c)
        $$
        for $0 \leq i \leq l$. 
        \item[(2)] For every adjacent pair $x,y \in S_r(c)$ there exists a path 
        $$
        x = x_0, x_1, \dots, x_l = y
        $$
        with 
        $$
        x_i \in S_{r+1}(c) \cup \dots \cup S_{r+w}(c)
        $$
        for $0<i<l$.
    \end{itemize}
    Then $S_r(c) \cup S_{r+1}(c) \cup \dots \cup S_{r+w-1}(c)$ is connected.
\end{lemma}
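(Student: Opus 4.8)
\textbf{Proof plan for \cref{suffCondConnSpheres}.}
The plan is to argue directly by induction on distance, constructing an explicit path in $S_r(c) \cup \dots \cup S_{r+w-1}(c)$ between two arbitrary vertices of this set. Fix two vertices $u, v$ in $A := S_r(c) \cup S_{r+1}(c) \cup \dots \cup S_{r+w-1}(c)$. Since the ambient graph $\Gamma$ need not be connected, the right statement to prove is that the induced subgraph on $A$ is connected \emph{as a subgraph of the component containing $c$} — but more robustly, I would reduce to showing that every vertex of $A$ can be joined within $A$ to some fixed vertex $p_0 \in S_r(c)$. (If $S_r(c)$ is empty the statement is vacuous or follows trivially, so assume $S_r(c) \neq \emptyset$ and fix $p_0 \in S_r(c)$; note every vertex of $A$ is at finite distance from $c$, hence in the same component as $c$.)

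The core step is a \emph{descent lemma}: for every $k$ with $r+1 \le k \le r+w-1$ and every $x \in S_k(c)$, there is a path inside $A$ from $x$ to some vertex of $S_r(c)$. I would prove this by downward induction on $k$. Given $x \in S_k(c)$, pick a geodesic from $x$ to $c$; its second vertex $z$ lies in $S_{k-1}(c)$, and $x \in S_{k-1+1}(c)\cap B_1(z)$... but that is the wrong direction. Instead the useful move uses hypothesis (1) with the roles arranged so that we step \emph{down}: take $z \in S_{k-1}(c)$ adjacent to $x$ (exists, from the geodesic). If $k - 1 \ge r$, then $z \in A$ and we have reduced to connecting $z$, which lies one sphere closer to $c$, to $S_r(c)$; iterate. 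The delicate point — and this is where hypothesis (1) is actually needed — is to show that two \emph{different} descending choices land in the same connected component of the induced subgraph on $A$; I handle that below. So the descent alone shows: every vertex of $A$ is connected within $A$ to \emph{some} vertex of $S_r(c)$.

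It therefore remains to show all of $S_r(c)$ lies in one component of the induced subgraph on $A$. Here hypotheses (1) and (2) do the work. Since $S_r(c)$ is nonempty and $\Gamma$ restricted to the component of $c$ is connected, any two vertices $x, y \in S_r(c)$ are joined by a path in $\Gamma$; I would take such a path and ``push it into $A$'' edge by edge. Consecutive vertices $p, q$ on the path satisfy $d(p,q) = 1$, and $d(p,c), d(q,c)$ differ by at most $1$. If both are $\ge r$, bound $d(p,c), d(q,c)$ from above: the issue is that the $\Gamma$-path may wander far from $c$. To control this, instead route between $x$ and $y$ through $c$ itself: concatenate a geodesic $x = p_0, p_1, \dots, p_r = c$ with a geodesic $c = q_r, \dots, q_0 = y$ reversed. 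On the geodesic $p_0, \dots, p_r$, each consecutive pair $p_i \in S_{r-i}(c)$, $p_{i+1}\in S_{r-i-1}(c)$; for $i = 0$ we have $p_0 = x \in S_r(c)$ and $p_1 \in S_{r-1}(c)$ adjacent to it. Now apply hypothesis (2) to... no: (2) needs \emph{both endpoints in $S_r(c)$}. The correct application is: for the adjacent pair issue, use that $x$ and any chosen ``backtrack'' $p_1 \in S_{r-1}(c)$ are linked via vertices of $S_{r-1}$... which again is below $r$. So the only viable route is: apply hypothesis (1) at the vertex $p_1 \in S_{r-1}(c)$ — wait, (1) is stated for $z \in S_r(c)$. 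I now see the intended mechanism: reindex. Hypothesis (1) says, for ANY radius (it is stated ``let $r > 0$ be arbitrary'' with the same $w$), that for $z \in S_\rho(c)$ and $x,y \in S_{\rho+1}(c)\cap B_1(z)$ there is a path in $S_{\rho+1}(c)\cup\dots\cup S_{\rho+w}(c)$. Apply this with $\rho = r - 1$, $z = p_1 \in S_{r-1}(c)$, and the two neighbors of $p_1$ in $S_r(c)$ coming from the two geodesics: this connects them within $S_r(c)\cup\dots\cup S_{r+w-1}(c) = A$. So: reduce each $x\in S_r(c)$ to its geodesic-backtrack in $S_{r-1}(c)$, note that all of $S_{r-1}(c)$ is connected to $c$ hence (by downward induction on $r$, the base case being $r$ small where $S_{r-1}$ or $S_0 = \{c\}$ is a single point or handled by hypothesis (2) directly) we get all backtracks identified, and hypothesis (1) at radius $r-1$ glues the two neighbors in $S_r(c)$.

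\textbf{Main obstacle.} The genuine difficulty is the bookkeeping of the induction: the statement must be proved simultaneously for all $r > 0$ (with $w$ fixed), and the ``descent'' and ``$S_r(c)$ is one component'' steps feed into each other, so I would set up a single induction on $r$ (or on $r$ for the auxiliary claim ``$S_{r-1}(c)$ lies in one component of $S_{r-1}(c)\cup\dots\cup S_{r+w-2}(c)$''), being careful that applying hypothesis (1) at radius $r-1$ only requires knowledge at strictly smaller radius. The base case ($r = 1$, where $S_0(c) = \{c\}$ is a single vertex so every element of $S_1(c)$ is adjacent to the common vertex $c$ — though $c \notin A$; here one uses hypothesis (1) at $\rho = 0$ directly with $z = c$) needs separate, careful attention. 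Everything else is routine triangle-inequality manipulation.
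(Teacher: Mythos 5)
Your plan has the right architecture, and it matches the standard proof of this lemma (which this paper does not reprove; it is quoted from \cite{Wright1}): induct on $r$, reduce by descent to connecting vertices of $S_r(c)$, and apply the hypotheses one sphere below, with the base case using condition (1) at $z=c$. But the heart of the inductive step is missing. When $x,y\in S_r(c)$ have distinct backtracks $x',y'\in S_{r-1}(c)$, the inductive hypothesis gives you a path from $x'$ to $y'$ inside $S_{r-1}(c)\cup\cdots\cup S_{r+w-2}(c)$, and you never explain how this is converted into a path from $x$ to $y$ inside $S_r(c)\cup\cdots\cup S_{r+w-1}(c)$; saying ``we get all backtracks identified'' and that (1) ``glues the two neighbors'' only covers the trivial case $x'=y'$. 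The missing procedure is the substantive content of the lemma, not ``routine triangle-inequality manipulation'': consider the path $x,x',\dots,y',y$; first apply hypothesis (2) at radius $r-1$ to every edge whose two endpoints both lie in $S_{r-1}(c)$, so that afterwards no two consecutive vertices are in $S_{r-1}(c)$; then each remaining vertex $v\in S_{r-1}(c)$ on the path has both path-neighbors in $S_r(c)\cap B_1(v)$ (they are adjacent to a vertex of $S_{r-1}(c)$, hence at distance at most $r$ from $c$, and at distance at least $r$ since they avoid $S_{r-1}(c)$ and the path stays outside $B_{r-2}(c)$), so hypothesis (1) at radius $r-1$ replaces each segment $p,v,q$ by a path in $S_r(c)\cup\cdots\cup S_{r+w-1}(c)$. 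Tellingly, hypothesis (2) never actually enters your main argument, yet it is exactly what makes the application of (1) legitimate; without this push-up step the induction does not close.

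Two smaller points. Your base case invokes condition (1) at radius $0$ (with $z=c$), which the statement as transcribed (``let $r>0$ be arbitrary'') does not literally grant; it is genuinely needed (if $c$ has two non-adjacent neighbors and the graph has no other vertices, the conditions hold vacuously for all $r>0$ while $S_1(c)\cup\cdots\cup S_w(c)$ is disconnected), so the hypotheses must be read as holding for all radii including $0$ -- you are right to use it, but you should say explicitly that you are reading the hypotheses that way. Also, the worry about ``two different descending choices landing in different components'' is not an issue for the descent itself; it is subsumed by the $S_r(c)$-connectivity step, so that aside can be dropped.
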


\begin{lemma}\label{conditions}
    In $\mathcal{C}\Sigma_{0,5}$, for all $r \geq 0$ the following hold:
    \begin{itemize}
        \item [(1)] For every $z \in S_r$ and $x, y \in S_{r+1} \cap B_1(z)$ there exists a path 
        $$
        x = x_0, \dots, x_l = y
        $$
        with
        $$
        x_i \in (S_{r+1} \cup S_{r+2}) \cap B_6(z)
        $$
        for $0 \leq i \leq l$.
        \item [(2)] For every adjacent pair $x,y \in S_r$ there exists a path 
        $$
        x = x_0, x_1, x_2, x_3, x_4 = y
        $$
        with
        $$
        x_i \in S_{r+1} \cup S_{r+2}
        $$
        for $0 < i < 4$.
    \end{itemize}
\end{lemma}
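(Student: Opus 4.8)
The plan is to establish the two items of \cref{conditions} by directly invoking the ``push up'' lemmas already proved, handling separately the cases according to whether the relevant center vertex has unique backtracking.

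First I would prove item (1). Given $z \in S_r$ and $x,y \in S_{r+1} \cap B_1(z)$, the hypotheses of \cref{pushUpA} are met verbatim with $a = z$, $b = x$, $b' = y$. That lemma directly produces a path from $x$ to $y$ entirely inside $(S_{r+1} \cup S_{r+2}) \cap B_6(z)$, which is exactly the conclusion required. (One should note the edge case $r = 0$: then $S_0 = \{c\}$, so $z = c$, and any $x,y \in S_1 \cap B_1(c) = S_1$ are connected by \cref{pushUpA} as well, or one can observe $S_1(c)$ together with $S_2(c)$ is already handled; in any case \cref{pushUpA} is stated for $a \in S_r$ with $r$ arbitrary $\geq 0$ via the underlying lemmas, so no separate argument is needed.) So item (1) is essentially immediate.

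The substance is item (2): given adjacent $x,y \in S_r$, we must produce a path $x = x_0, x_1, x_2, x_3, x_4 = y$ of length exactly $4$ with the three interior vertices in $S_{r+1} \cup S_{r+2}$. The idea is to use a pentagon. Since $x,y \in S_{r-1}$ relative to... no — rather, $x,y$ are adjacent vertices of $S_r = S_r(c)$; I would apply \cref{6.5inPaper} with the roles $a_1 = x$, $a_3 = y$ (they are adjacent, and they play the role of ``$S_{r-1}$'' vertices after re-indexing, i.e. we think of them as the inner pair) to obtain $a_2, a_4, a_5 \in S_{r+1} \cup S_{r+2}$ so that $(x, a_2, y, a_4, a_5)$ is a pentagon. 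Wait — \cref{6.5inPaper} is stated with $a_1, a_3 \in S_{r-1}$; here $x, y \in S_r$, so one applies it with the sphere index shifted: the hypothesis ``$a_1, a_3$ adjacent in $S_{r-1}$'' becomes ``$x,y$ adjacent in $S_r$'', producing $a_2, a_4, a_5 \in S_{r+1} \cup S_{r+2}$. Traversing the pentagon in the order $(a_1, a_3, a_5, a_2, a_4)$ — i.e. $x, y$ are \emph{not} adjacent in the pentagon cycle, so instead the $5$-cycle is $x \to a_5 \to a_2 \to y$... let me recompute: the cycle through $(a_1,a_3,a_5,a_2,a_4)$ visits $a_1 = x$, then $a_3 = y$, then $a_5$, then $a_2$, then $a_4$, back to $a_1$. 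Since $x$ and $y$ are adjacent that cycle gives an edge $x$–$y$ directly, which is not what we want; instead I want the \emph{complementary} path in the $5$-cycle: $x = a_1, a_4, a_2, a_5, a_3 = y$. This is a path of length $4$ whose three interior vertices $a_4, a_2, a_5$ all lie in $S_{r+1} \cup S_{r+2}$, as required.

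The main obstacle I anticipate is bookkeeping the sphere indices when applying \cref{6.5inPaper} and confirming the three ``middle'' pentagon vertices land in $S_{r+1}\cup S_{r+2}$ and not, say, in $S_{r-1}$ or $S_r$; this requires checking that the pentagon produced by \cref{6.5inPaper} (whose statement guarantees membership in ``$S_r \cup S_{r+1}$'' relative to its own indexing) really does place $a_2, a_4, a_5$ in the two spheres immediately outside $S_r$, and that none of them accidentally equals $x$ or $y$ or is adjacent to $c$ in a way that lowers its distance. The no-triangle and no-quadrilateral facts for $\mathcal{C}\Sigma_{0,5}$ (\cite[Lemma 6.1]{Wright1}) should rule out the degenerate configurations. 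A secondary subtlety is the small-radius case $r = 1$ in item (2), where $x, y \in S_1$ cannot be adjacent at all since $S_1$ is totally disconnected (no triangles through $c$); hence item (2) is vacuous for $r \leq 1$ and the pentagon argument is only needed for $r \geq 2$, where \cref{6.5inPaper} applies cleanly.
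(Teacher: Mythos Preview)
Your proposal is correct and follows the same approach as the paper: item (1) is exactly \cref{pushUpA}, and item (2) is exactly \cref{6.5inPaper} applied with the sphere index shifted by one, using the complementary path $x=a_1,a_4,a_2,a_5,a_3=y$ in the resulting pentagon. The extra worries you raise (degenerate configurations, the cases $r\le 1$) are unnecessary, since \cref{6.5inPaper} already places $a_2,a_4,a_5$ in the correct spheres and item (2) is vacuous when $S_r$ has no adjacent pair.
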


\begin{proof} The first claim is \cref{pushUpA} and the second claim is \cref{6.5inPaper}.
\end{proof}

\begin{proof}[Proof of \cref{mainPropLowComp}]Since the curve graphs $\mathcal{C}\Sigma_{0,5}$ and $\mathcal{C}\Sigma_{1,2}$ for the low complexity surfaces are isomorphic, it suffices to prove \cref{mainPropLowComp} for $\mathcal{C}\Sigma_{0,5}$. The result follows immediately from combining \cref{suffCondConnSpheres} and \cref{conditions}. 
\end{proof}

%%%%%%%%% Med Complexity %%%%%%%%%%%%%%%
%%%%%%%%% Med Complexity %%%%%%%%%%%%%%%
%%%%%%%%% Med Complexity %%%%%%%%%%%%%%%
%%%%%%%%% Med Complexity %%%%%%%%%%%%%%%
%%%%%%%%% Med Complexity %%%%%%%%%%%%%%%
%%%%%%%%% Med Complexity %%%%%%%%%%%%%%%
%%%%%%%%% Med Complexity %%%%%%%%%%%%%%%
%%%%%%%%% Med Complexity %%%%%%%%%%%%%%%
%%%%%%%%% Med Complexity %%%%%%%%%%%%%%%
%%%%%%%%% Med Complexity %%%%%%%%%%%%%%%

\section{Medium complexity}

%****                       ***%
%%%%%%%%%%% objectives (med) %%%%%%%%%%%%%%%
%%%%%%%%%%% objectives (med) %%%%%%%%%%%%%%%
%%%%%%%%%%% objectives (med) %%%%%%%%%%%%%%%
%****                       ***%

Throughout this section we assume $\Sigma$ is medium complexity. Again we fix a center vertex $c$ and let $S_r = S_r(c)$. In this section we upgrade the results from \cite[Theorem 1.1]{Wright1} to prove \cref{mainPropMedComp}: $S_r$ is connected for medium complexity surfaces. 

\subsection{Organization}
We use \cite[Theorem 1.1 (2)]{Wright1} that $S_r\cup S_{r+1}$ is connected and begin with a path in $S_r\cup S_{r+1}$. Then we use the definition $\mathcal{O}(z)$, introduced by Wright, as a tool to push the path into $S_{r+1}$ by allowing the path to contain vertices which need not be essentially non-separating.

%****                       ***%
%%%%%%%%%%% definitions (med) %%%%%%%%%%%%%%%
%%%%%%%%%%% definitions (med) %%%%%%%%%%%%%%%
%%%%%%%%%%% definitions (med) %%%%%%%%%%%%%%%
%****                       ***%

\subsection{Essentially non-separating curves} 

\begin{definition}
    A curve on $\Sigma$ is called a \emph{pants curve} if it bounds a genus 0 subsurface with 2 punctures. 
\end{definition}

\begin{definition}\label{essentially_non_separating}
    A curve on $\mathcal{C}\Sigma$ is \emph{essentially non-separating} if it is non-separating or a pants curve. A two-component multi-curve $\alpha \cup \beta$ is \emph{essentially non-separating} if $\alpha$ and $\beta$ themselves are essentially non-separating, and either
    \begin{itemize}
        \item[(1)] $\alpha \cup \beta$ is non-separating, 
        \item[(2)] at least one of $\alpha$ or $\beta$ is a pants curve, or 
        \item[(3)] $\alpha \cup \beta$ bounds a genus 0 subsurface with 1 puncture. 
    \end{itemize}
\end{definition}

For $c\in\mathcal{C}\Sigma$, we can define $\mathcal{C}_c\Sigma$ as the subgraph of $\mathcal{C}\Sigma$ whose vertex set is $\{c\}$ union all essentially non-separating curves on $\mathcal{C}_c\Sigma$. Disjoint curves $\alpha$ and $\beta$ are joined by an edge if either $\alpha \cup \beta$ is essentially non-separating or they have different distances to $c$. 

To fix notation, let $S_r^c = S_r \cap \mathcal{C}_c\Sigma$. 
\begin{remark}\label{defCoincides}\cite[Lemma 5.2]{Wright1}
Wright showed that $S_r^c$ coincides with the sphere of radius $r$ in $\mathcal{C}_c\Sigma$. 
\end{remark}

We now recall the following results:

%\begin{lemma}\label{twoSpheresConnected} \todo{H: do we need to recall this as a new lemma? we already have it stated at the start of our paper as theorem 1.3}
    %Let $\Sigma$ be medium complexity. Then for all $r>0$, $S_r \cup S_{r+1}$ is connected. 
%\end{lemma}

%\begin{lemma}\label{complementBallConnected}
 %   Fix $c \in \mathcal{C}\Sigma$. Then $$\bigcup_{i=m}^\infty S_i(c)$$ is connected for all $m\geq 0$. 
%\end{lemma}
%\begin{proof}
 %   By \cref{previous}, $S_m \cup S_{m + 1}$ is connected. And every vertex in $\bigcup_{i=m}^\infty S_i$ can be connected by a path to $S_m \cup S_{m + 1}$. 
%\end{proof}

%Additionally, we will want the following two lemmas. 

\begin{lemma}\label{ENSunionConnected}
    $S_r^c \cup S_{r+1}^c$ is connected.
\end{lemma}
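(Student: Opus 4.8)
\textbf{Proof plan for \cref{ENSunionConnected}.}
The statement asserts that $S_r^c \cup S_{r+1}^c$ is connected, where these are the spheres in the ``essentially non-separating'' subgraph $\mathcal{C}_c\Sigma$. The natural strategy is to deduce this directly from Wright's \cref{previous}(2), which says that $S_r \cup S_{r+1}$ is connected in the \emph{full} curve graph $\mathcal{C}\Sigma$, together with \cref{defCoincides}, which identifies $S_r^c$ with the radius-$r$ sphere computed inside $\mathcal{C}_c\Sigma$. The key point to establish is that connectivity transfers from the ambient curve graph to the subgraph $\mathcal{C}_c\Sigma$: given two essentially non-separating curves $\alpha, \beta$ at distance $r$ or $r+1$ from $c$, we must upgrade an arbitrary path between them in $S_r \cup S_{r+1} \subset \mathcal{C}\Sigma$ to one whose every vertex is essentially non-separating and still lies in $S_r^c \cup S_{r+1}^c$.

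First I would take a path $\alpha = v_0, v_1, \ldots, v_m = \beta$ in the induced subgraph on $S_r \cup S_{r+1}$ in $\mathcal{C}\Sigma$, guaranteed by \cref{previous}(2). The obstacle is that some $v_i$ need not be essentially non-separating, so they are not vertices of $\mathcal{C}_c\Sigma$ at all. The plan is to replace each bad vertex $v_i$ (or each maximal run of bad vertices) by a detour through essentially non-separating curves. Here one invokes the structural facts Wright proved about $\mathcal{C}_c\Sigma$: that it is connected, that its inclusion into $\mathcal{C}\Sigma$ is suitably coarsely surjective, and — crucially — the edge relation in $\mathcal{C}_c\Sigma$ is \emph{richer} than in $\mathcal{C}\Sigma$ (disjoint curves at different distances from $c$ are joined by an edge even when their union fails to be essentially non-separating). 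This extra edge relation is exactly what lets a path cross between $S_r^c$ and $S_{r+1}^c$ freely. So for a good vertex $v_i \in S_r^c \cup S_{r+1}^c$ with a bad neighbor, I would find an essentially non-separating curve disjoint from both $v_{i-1}$ and $v_{i+1}$ (using that the relevant subsurface complements have enough room — this is where medium complexity $\xi = 3$ and the classification of which curves are essentially non-separating enters), lying in $S_r^c \cup S_{r+1}^c$, and splice it in.

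The cleanest version of this argument is probably to cite the specific lemma from \cite{Wright1} that does precisely this replacement — Wright's paper contains (as part of proving \cref{previous}(2)) a statement that adjacent or nearly-adjacent essentially non-separating curves can be connected within $S_r^c \cup S_{r+1}^c$, which is the content needed here; \cref{ENSunionConnected} may simply \emph{be} that statement, repackaged. If so, the proof is a one-line appeal to \cref{defCoincides} plus \cite[Theorem 1.1(2)]{Wright1} applied in $\mathcal{C}_c\Sigma$, after checking $\mathcal{C}_c\Sigma$ satisfies whatever hypotheses Wright's theorem needs (it is itself a curve-graph-like object on which the argument runs).

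The main obstacle I anticipate is the vertex-replacement step: ensuring that when we detour around a non-essentially-non-separating curve $v_i$, the substitute curves we pick are genuinely disjoint from the correct neighbors, are themselves essentially non-separating, and have distance exactly $r$ or $r+1$ from $c$ (not $r-1$ or $r+2$). Controlling the distance is the delicate part — it likely requires the Bounded Geodesic Image Theorem (\cref{BGI}) or a direct surgery argument showing the substitute curve can be chosen disjoint from a curve realizing the distance bound. Since the problem says I may assume everything stated earlier, and the heavy lifting is genuinely Wright's, I expect the actual proof in the paper to be short: quote \cref{defCoincides} to move to $\mathcal{C}_c\Sigma$, then quote \cite{Wright1} for connectivity of consecutive-sphere-unions there, with perhaps a sentence verifying the hypotheses carry over.
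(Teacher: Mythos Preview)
Your primary plan --- take a path in $S_r \cup S_{r+1}$ in the full $\mathcal{C}\Sigma$ and replace the non--essentially-non-separating vertices --- runs in the wrong direction. In \cite{Wright1}, connectivity of $S_r \cup S_{r+1}$ in $\mathcal{C}\Sigma$ is deduced \emph{from} the corresponding statement in $\mathcal{C}_c\Sigma$, not the other way around; so starting from \cref{previous}(2) and descending is at best redoing Wright's work backwards. The vertex-replacement obstacle you flag is real: \cref{connectToSrc} gives you a nearby essentially non-separating curve, but not one disjoint from both neighbours on the path, and nothing in the preliminaries controls that.

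The paper's proof is the one-liner you half-anticipate at the end, but with a different citation than you guess. One does not apply \cite[Theorem~1.1(2)]{Wright1} to $\mathcal{C}_c\Sigma$ (that theorem is stated for curve graphs of surfaces, which $\mathcal{C}_c\Sigma$ is not). Instead, \cite[Proposition~5.4]{Wright1} directly verifies that the two hypotheses of the abstract connectivity criterion \cref{suffCondConnSpheres} hold in the graph $\mathcal{C}_c\Sigma$ with $w=2$; the lemma then follows immediately from \cref{suffCondConnSpheres} (using \cref{defCoincides} to identify $S_r^c$ with the intrinsic sphere in $\mathcal{C}_c\Sigma$). No path-surgery or Bounded Geodesic Image argument is needed here.
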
 
\begin{proof}
     \cite[Proposition 5.4]{Wright1} verifies that the sufficient conditions for the connectivity of spheres in \cref{suffCondConnSpheres} hold in $\mathcal{C}_c\Sigma$ with $w=2$.
\end{proof}

%\begin{remark}
 %   During Wright's proof of \cite[Theorem 1.1]{Wright1}, he proves \cref{ENSunionConnected} as an immediate corollary of \cite[Proposition 5.4]{Wright1}. But Wright never extracts \cref{ENSunionConnected} as a separate statement. We now extract it above as \cref{ENSunionConnected}, because we will use it in the rest of the section. 
%\end{remark}

\begin{lemma}\label{connectToSrc}\cite[Lemma 5.3]{Wright1}
    Suppose $\Sigma$ has medium complexity. For all $x\in S_r,$ then either $x\in S_r^c$ or there exists $x'\in S_r^c\cap S_1(x).$
\end{lemma}

%The following lemma \ref{oneDense} is useful in connecting vertices within $\mathcal{C}\Sigma$.
%\begin{lemma}\label{oneDense}
    %Every $x \in \mathcal{C}\Sigma$ is equal or adjacent to a vertex in $\mathcal{C}_c\Sigma$.\todo{A Jul 23: Has $\mathcal{C}_c\Sigma$ been defined? H: yes right below def 4.3} 
%\end{lemma}
%\begin{proof}
   % Since $\Sigma$ is medium complexity, we have either genus 1 or exactly $6$ punctures. Thus, the result follows directly from \cite[lemma 3.10]{Wright1}. 
%\end{proof}

%****                       ***%
%%%%%%%%%%% med lemmas %%%%%%%%%%%%%%%
%%%%%%%%%%% med lemmas %%%%%%%%%%%%%%%
%%%%%%%%%%% med lemmas %%%%%%%%%%%%%%%
%****                       ***%

\subsection{Definition and properties of $\mathcal{O}(z)$}

In order to prove \cref{mainPropMedComp}, we make use of the following definition and prove several of its properties. 

\begin{definition}
    For any $z \in S_r^c$, define
$$
\mathcal{O}(z) = \{a \in S_1(z) \cap \mathcal{C}_c\Sigma : d_U(a, c) > M\} 
$$
where $U$ is the unique component of $\Sigma - z$ that is not a pants. Observe that $\mathcal{O} (z) \subseteq S_{r+1}^c$. 
\end{definition}

Recalling \cite[Lemma 7.2]{Wright1}, we know we can connect any essentially non-separating curve to $\mathcal{O}(z)$:
\begin{lemma}\label{connectToOz}\cite[Lemma 7.2]{Wright1}
    Let $z \in S_r^c$ and $U$ be the unique connected component of $\Sigma - z$ that is not a pants. Then for all $N>0,$ any $x \in S_1(z) \cap S_{r+1}^c$ can be connected to some $e \in \mathcal{O}(z)$ by a path in $S_1(z) \cap S_{r+1}^c$. Moreover, $e$ can be taken such that $d_U(e, c) > N$.
\end{lemma}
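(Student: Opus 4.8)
The plan is to transfer the problem to the essential complementary component $U$, use the Bounded Geodesic Image Theorem to identify $\mathcal{O}(z)$ with the ``deep'' part of $S_1(z)\cap\mathcal{C}_c\Sigma$, and then connect $x$ to an arbitrarily deep curve using the connectivity of the curve graph of the low-complexity surface $U$. The first step is a reduction: when $z$ separates $\Sigma$ the component other than $U$ is a pair of pants, which carries no essential non-peripheral simple closed curve, so the only essential non-peripheral curve of $\Sigma$ not cutting $U$ is $z$ itself. Consequently every vertex of $S_1(z)$ can be isotoped into $U$, and $S_1(z)\cap\mathcal{C}_c\Sigma$ is exactly the set of curves supported in $U$ that are essentially non-separating in $\Sigma$; note also that $c$ cuts $U$ (as $d(c,z)=r\geq 1$), so $\rho_U(c)$ is defined.

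Next I would record the following consequence of \cref{BGI}: if $d_U(a,c)>M$ then every geodesic of $\mathcal{C}\Sigma$ from $a$ to $c$ contains a curve not cutting $U$, hence a curve isotopic to $z$, so $d(a,c)=d(a,z)+d(z,c)=r+1$. This is the inclusion $\mathcal{O}(z)\subseteq S_{r+1}^c$ recorded after the definition of $\mathcal{O}(z)$; more usefully, it shows that any essentially non-separating curve supported in $U$ with $d_U(\cdot,c)>M$ automatically lies in $S_1(z)\cap S_{r+1}^c$. Taking the contrapositive, every curve of $S_1(z)\cap\mathcal{C}_c\Sigma$ at distance $r-1$ or $r$ from $c$ projects into the ball of radius $M$ about $\rho_U(c)$ in $\mathcal{C}U$. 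So $\mathcal{O}(z)$ is precisely the part of $S_1(z)\cap S_{r+1}^c$ whose projection lies outside that $M$-ball.

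Since $\xi(U)=2$, the graph $\mathcal{C}U$ has infinite diameter, so for any $N$ there is a curve $e$ supported in $U$ with $d_U(e,c)>\max(M,N)$, and $e$ may be chosen essentially non-separating in $\Sigma$ — non-separating in $U$ when $U=\Sigma_{1,2}$, or a subcurve of $U$ cutting off two punctures of $\Sigma$ when $U=\Sigma_{0,5}$; by the previous paragraph $e\in\mathcal{O}(z)$ with $d_U(e,c)>N$. It then remains to connect the given $x$ to $e$ by a path inside the induced subgraph of $\mathcal{C}_c\Sigma$ on $S_1(z)\cap S_{r+1}^c$. I would build this path through curves supported in $U$: first push $x$ outward a bounded number of steps in $\mathcal{C}U$ until its projection leaves the $M$-ball about $\rho_U(c)$ (so that, by the pin-down above, we are in $S_{r+1}^c$), and then move to $e$ through curves whose projections stay outside that ball, using the connectivity of $\mathcal{C}U$ away from a bounded neighbourhood of a vertex. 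Along the way one must also check, surface by surface, that consecutive disjoint curves can be taken to form an essentially non-separating two-component multicurve (so that they are joined by an edge of $\mathcal{C}_c\Sigma$), which the restricted list of curve types in the low-complexity surface $U$ permits.

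The main obstacle is the last step: keeping the connecting path at $\Sigma$-distance exactly $r+1$ from $c$, i.e.\ ruling out that it ever drops to radius $r-1$ or $r$. For the deep portion this is automatic from the Bounded Geodesic Image pin-down, so the only danger is the short initial stretch near $\rho_U(c)$, where having small $d_U(\cdot,c)$ gives no control on the $\Sigma$-distance to $c$; handling it requires the finer analysis of the local structure of spheres near $z$ in $\mathcal{C}_c\Sigma$ carried out in \cite{Wright1}.
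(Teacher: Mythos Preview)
The paper does not give its own proof of this lemma: it is quoted verbatim from \cite[Lemma 7.2]{Wright1} and used as a black box. So there is no argument in the present paper to compare your proposal against.

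That said, your outline is a plausible reconstruction of how such a proof would go: identify $S_1(z)$ with curves in the low-complexity subsurface $U$, use \cref{BGI} to see that any essentially non-separating curve in $U$ with $d_U(\cdot,c)>M$ automatically lies in $S_{r+1}^c$, choose $e$ arbitrarily deep in $\mathcal{C}U$, and then try to connect $x$ to $e$ through the deep region. You also correctly isolate the real difficulty: the ``initial stretch'' from $x$ out past the $M$-ball around $\rho_U(c)$, where BGI gives no radial control and the path could a priori drop to $S_{r-1}$ or $S_r$. But you then defer that step back to \cite{Wright1}, which is exactly what the paper does. So your proposal, like the paper, ultimately relies on Wright's argument rather than reproducing it. A second point you handwave is ensuring that consecutive vertices along the $\mathcal{C}U$-path form essentially non-separating multicurves (so that they are genuinely adjacent in $\mathcal{C}_c\Sigma$ when both sit in $S_{r+1}$); this needs a short case check depending on which medium-complexity $\Sigma$ one is in.

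In short: there is no discrepancy with the paper, because the paper does not prove this lemma; your sketch is a reasonable but admittedly incomplete roadmap rather than a self-contained proof.
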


%%%%%%%%%%% lem:connecting points in O(z) %%%%%%%%%%%%%%%
Additionally, we will make use of the following lemma:
\begin{lemma}\label{connectTwoPointsInOz}
    Let $z \in S_r^c$ and $a,b \in \mathcal{O}(z)$. Then $a, b$ can be connected by a path contained entirely in $S_{r+1}$. 
\end{lemma}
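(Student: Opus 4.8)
The plan is to connect $a$ and $b$ inside $S_{r+1}$ by working in the non-separating curve graph $\mathcal{C}_c\Sigma$ where we have connectivity of $S_r^c \cup S_{r+1}^c$ (\cref{ENSunionConnected}), and then using the defining property $d_U(\cdot, c) > M$ of $\mathcal{O}(z)$ together with the Bounded Geodesic Image Theorem to push any excursion into $S_r^c$ back up into $S_{r+1}$. First I would note that since $a, b \in \mathcal{O}(z) \subseteq S_{r+1}^c$, \cref{ENSunionConnected} gives a path from $a$ to $b$ with all vertices in $S_r^c \cup S_{r+1}^c$. The vertices of this path that already lie in $S_{r+1}$ are fine; the problem is the vertices lying in $S_r^c$.

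Next I would handle the vertices that drop down to $S_r^c$. The key point is that every vertex $w$ on the path is adjacent (in $\mathcal{C}_c\Sigma$) to $z$-neighbors, or more to the point, the whole path stays in $B_1(z)$ in the appropriate sense — actually one should be careful here: the path from \cref{ENSunionConnected} need not stay near $z$. So instead, the cleaner approach is: for each maximal subpath of vertices lying in $S_r^c$, use \cref{connectToOz} to reroute. Concretely, if $w \in S_r^c$ appears on the path with neighbors $w^-, w^+ \in S_{r+1}^c$ on either side, I want to replace the segment $w^-, w, w^+$ by a segment staying in $S_{r+1}$. Since $w^-$ and $w^+$ are both adjacent to $w \in S_r^c$, they lie in $S_1(w) \cap S_{r+1}^c$, so by \cref{connectToOz} applied with center-vertex $w$, each of $w^-, w^+$ can be connected within $S_1(w) \cap S_{r+1}^c \subseteq S_{r+1}$ to vertices in $\mathcal{O}(w)$; and then I need that any two vertices of $\mathcal{O}(w)$ are connected in $S_{r+1}$ — but that is exactly the statement being proved, so this is circular unless $\mathcal{O}(w)$ is already known to be connected in $S_{r+1}$, which it is not a priori.

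The way out is to prove the two-vertices-of-$\mathcal{O}(z)$ statement \emph{directly} rather than reducing it to itself. So the real plan: take $a, b \in \mathcal{O}(z)$. By \cref{ENSunionConnected} there is a path $a = v_0, v_1, \ldots, v_k = b$ in $S_r^c \cup S_{r+1}^c$. For each $i$ with $v_i \in S_r^c$, the neighbors $v_{i-1}, v_{i+1}$ lie in $S_1(v_i) \cap S_{r+1}^c$; apply \cref{connectToOz} with the vertex $v_i$ playing the role of ``$z$'' to connect $v_{i-1}$ to some $e_{i-1} \in \mathcal{O}(v_i)$ and $v_{i+1}$ to some $e_{i+1} \in \mathcal{O}(v_i)$, both by paths in $S_1(v_i) \cap S_{r+1}^c \subseteq S_{r+1}$. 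Now $e_{i-1}$ and $e_{i+1}$ are disjoint from $v_i$ and both have large $d_{U_i}(\cdot, c)$ where $U_i$ is the non-pants component of $\Sigma - v_i$; I would then argue they are actually \emph{adjacent} in $\mathcal{C}\Sigma$ (or connected by a very short explicit path in $S_{r+1}$) — for instance, if $U_i$ has enough complexity one can choose $e_{i-1} = e_{i+1}$ outright by taking the same output of \cref{connectToOz}, or choose them disjoint. In fact the cleanest move is: apply \cref{connectToOz} once to route \emph{both} $v_{i-1}$ and $v_{i+1}$ toward $\mathcal{O}(v_i)$, and observe $\mathcal{O}(v_i)$ contains two disjoint curves with huge subsurface projection which therefore bound a geodesic passing through $B_1(v_i)$ by \cref{BGI}, forcing... hmm.

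\textbf{Main obstacle.} The crux — and where I expect to spend the most care — is exactly showing that two elements $e, e'$ of a single $\mathcal{O}(v_i)$ can be joined by a path in $S_{r+1}$ without circularity. I would resolve this by using that $\mathcal{O}(v_i)$, being defined by the single inequality $d_{U_i}(\cdot,c) > M$ inside $S_1(v_i)\cap\mathcal{C}_c\Sigma$, is \emph{connected in $\mathcal{C} U_i$ after projection}: $\rho_{U_i}$ maps $\mathcal{O}(v_i)$ onto a subset of a metric ball's complement in $\mathcal{C} U_i$, and since $\mathcal{C} U_i$ has infinite diameter and is connected, one connects $\rho_{U_i}(e)$ to $\rho_{U_i}(e')$ in $\mathcal{C} U_i$ staying far from $\rho_{U_i}(c)$; lifting such a curve-graph path back to disjoint-from-$v_i$ curves keeps everything in $S_1(v_i)\cap S_{r+1}^c$ by \cref{BGI} (a curve $\delta$ disjoint from $v_i$ with $d_{U_i}(\delta,c)>M$ has every geodesic to $c$ meeting $B_1(v_i)$, hence $d(\delta,c)\ge d(v_i,c)-1 = r$, and disjointness from $v_i\in S_r^c$ plus $\delta$ essentially non-separating forces $d(\delta,c)=r+1$). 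Assembling: concatenate the rerouted segments to get a path from $a$ to $b$ with every vertex in $S_{r+1}$. I would write this up being careful that the lifted path of curves can be chosen essentially non-separating (using that $U_i$ has positive complexity in the medium-complexity setting, so $\mathcal{C} U_i$ and the relevant curves exist), and double-checking the distance bookkeeping $d(\delta, c) = r+1$ at each step.
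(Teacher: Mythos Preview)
Your eventual ``main obstacle'' paragraph is essentially the paper's argument, but you have wrapped it in an unnecessary and circular outer layer. The paper does not start with a path in $S_r^c\cup S_{r+1}^c$ at all: since $a,b\in\mathcal{O}(z)\subset S_1(z)$, both $a$ and $b$ are already curves in the non-pants component $U$ of $\Sigma\setminus z$, so one works directly in $\mathcal{C}U$. Pick $c'\in\rho_U(c)$; then $a,b$ lie outside a large ball about $c'$ in $\mathcal{C}U$, one connects them by a path in $\mathcal{C}U$ staying outside that ball, and then BGI forces every vertex $p_i$ of this path to have every geodesic to $c$ pass through $z$ (the unique curve not cutting $U$, since $z$ is essentially non-separating), giving $d(p_i,c)=1+r$. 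Your detour through $S_r^c\cup S_{r+1}^c$ and the subsequent rerouting around $v_i\in S_r^c$ simply reproduces the original problem with $v_i$ in place of $z$; you noticed this circularity but did not remove the scaffolding.

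More importantly, there is a genuine gap in your resolution of the obstacle. You assert that one can connect $\rho_{U}(e)$ to $\rho_{U}(e')$ in $\mathcal{C}U$ ``staying far from $\rho_{U}(c)$'' because $\mathcal{C}U$ ``has infinite diameter and is connected.'' That implication is false in general (trees, or the Farey graph itself, are counterexamples). What makes it true here is precisely that $U$ is \emph{low complexity}, so \cref{mainPropLowComp} gives that $S_{r'}(c')\cup S_{r'+1}(c')$ is connected in $\mathcal{C}U$ for every $r'$, and hence the complement of any ball about $c'$ is connected. The paper invokes \cref{mainPropLowComp} explicitly at this step; without it your argument does not close. (Also, your distance bookkeeping ``$d(\delta,c)\ge d(v_i,c)-1=r$'' is too weak: because the only curve not cutting $U$ is $z$ itself, BGI forces the geodesic to pass through $z$, yielding $d(\delta,c)=r+1$ on the nose; and the worry about the lifted path being essentially non-separating is unnecessary, since the lemma only asks for a path in $S_{r+1}$, not $S_{r+1}^c$.)
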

\begin{proof}
    Let $U$ be the unique connected component of $\Sigma - z$ that is not pants. Observe that the subsurface projection $\rho_U(c)$ is a finite set with diameter bounded by some constant $k$ (see section 2). Thus there exists $c' \in \rho_U(c)$ such that $d_{\mathcal{C}\Sigma}(c, c') \leq k$. Since $a, b \in \mathcal{O}(z)$, both $d_U(a, c), d_U(b ,c) \geq M + 1$, so by the triangle inequality, 
\begin{equation}
        a, b \in \bigcup_{r'=M+1+k}^{\infty}S_{r'}(c'),
\end{equation}
    where each $S_{r'}(c')$ is a sphere in $\mathcal{C}U$. This union is a subgraph of $\mathcal{C}U$. It is connected because $U$ is low complexity, and so \cref{mainPropLowComp} gives that $S_{M+1+k}(c') \cup S_{M+2+k}(c')$ is a connected subset of $\mathcal{C} U$. Thus we can find a path in $\mathcal{C}U$
    $$
    a = p_0, \dots, p_l = b
    $$
    such that each $d_U(p_i, c') \geq  M+1+k$. Then by the triangle inequality, $d_U(p_i,c)>M.$

Applying \cref{BGI} for all $0 < i < l$, every geodesic from $p_i$ to $c$ must go through $z$, as $z$ is the only vertex not cutting $U$ since it is essentially non-separating. By construction, for all $i$, $p_i$ lies entirely within $U$ and so $d(z, p_i) = 1$. Since $d(z, c) = r$ and , $d(p_i, c) = r + 1$ for all $i$, as desired.  
\end{proof}

%****                       ***%
%%%%%%%%%%% finishing median complexity %%%%%%%%%%%%%%%
%%%%%%%%%%% finishing median complexity %%%%%%%%%%%%%%%
%%%%%%%%%%% finishing median complexity %%%%%%%%%%%%%%%
%****                       ***%

\subsection{Proving \cref{mainPropMedComp}}

\begin{proof}[Proof of \cref{mainPropMedComp}]
Suppose $x,y\in S_{r+1}$ are arbitrary. By \cref{connectToSrc} we can connect $x,y$ to $x',y'\in S_{r+1}^c$ respectively, so it suffices to find a path connecting $x',y'$ inside $S_{r+1}.$ By \cref{ENSunionConnected}, $S_r^c\cup S_{r+1}^c$ is connected, so there exists a path $x'=x_0,x_1,...,x_k=y'$ contained in $S_r^c\cup S_{r+1}^c$. 

We now make use of a sublemma:
\begin{sublemma}\label{noBadEdgesSublemma}
        The path from $x'$ to $y'$ above can be taken to have no two consecutive vertices in $S_r^c.$
\end{sublemma}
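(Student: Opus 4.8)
\textbf{Plan of proof for \cref{noBadEdgesSublemma}.}

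The plan is to take the path $x' = x_0, x_1, \ldots, x_k = y'$ in $S_r^c \cup S_{r+1}^c$ given above, and to eliminate consecutive pairs of vertices lying in $S_r^c$ one at a time by rerouting through $S_{r+1}$ near each such pair. Suppose $x_j, x_{j+1}$ are both in $S_r^c$. By the definition of $\mathcal{C}_c\Sigma$, adjacent vertices at the same distance from $c$ are joined by an edge only if their union is essentially non-separating; in particular $x_j$ and $x_{j+1}$ are disjoint, and $x_{j+1}$ lies in $S_1(x_j) \cap S_r^c$. The first step is to replace the segment $x_{j-1}, x_j, x_{j+1}, x_{j+2}$ by a detour: use \cref{connectToOz} to connect $x_{j-1}$ (which lies in $S_1(x_j) \cap S_{r+1}^c$, since $x_{j-1}$ is adjacent to $x_j \in S_r^c$ and so cannot also be in $S_r^c$ without creating an earlier bad edge — or by induction we may assume $x_{j-1} \in S_{r+1}^c$) to some $e \in \mathcal{O}(x_j)$ by a path in $S_1(x_j) \cap S_{r+1}^c$, and similarly connect $x_{j+2}$ to some $e' \in \mathcal{O}(x_{j+1})$ by a path in $S_1(x_{j+1}) \cap S_{r+1}^c$.

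The second step is to bridge $e \in \mathcal{O}(x_j)$ to $e' \in \mathcal{O}(x_{j+1})$. Since $x_j$ and $x_{j+1}$ are disjoint, $e'$ is disjoint from $x_{j+1}$ which is disjoint from $x_j$; but that does not immediately place $e'$ in $S_1(x_j)$. The cleanest route is to observe that $\mathcal{O}(x_j) \subseteq S_{r+1}^c$ and to invoke \cref{connectToOz} again: take $e''$ a vertex of $\mathcal{O}(x_j)$ adjacent to $x_{j+1}$ (one can find such a vertex because $x_{j+1}$ lies in the non-pants component $U$ of $\Sigma - x_j$, so curves supported in $U$ disjoint from $x_{j+1}$ with large $U$-projection to $c$ exist), then connect $e$ to $e''$ inside $\mathcal{O}(x_j)$ using \cref{connectTwoPointsInOz}, and finally connect $e''$ — now viewed as an element of $S_1(x_{j+1}) \cap S_{r+1}^c$ — to $e'$ inside $S_1(x_{j+1}) \cap S_{r+1}^c$ via \cref{connectToOz}, then $e'$ to $e'$ trivially. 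The concatenated detour $x_{j-1} \rightsquigarrow e \rightsquigarrow e'' \rightsquigarrow e' \rightsquigarrow x_{j+2}$ lies entirely in $S_{r+1}$ (the pieces in $S_1(x_j) \cap S_{r+1}^c$ and $S_1(x_{j+1}) \cap S_{r+1}^c$ are in $S_{r+1}^c \subseteq S_{r+1}$, and the $\mathcal{O}(x_j)$-piece is in $S_{r+1}$ by \cref{connectTwoPointsInOz}), so the new path has strictly fewer vertices in $S_r^c$, and in particular fewer consecutive pairs in $S_r^c$. Iterating, after finitely many steps we obtain a path with no two consecutive vertices in $S_r^c$.

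The main obstacle I anticipate is the bridging step: making precise that one can choose a vertex $e'' \in \mathcal{O}(x_j)$ that is also adjacent to $x_{j+1}$ (equivalently, disjoint from $x_{j+1}$ and with $d_U(e'', c) > M$ where $U$ is the non-pants component of $\Sigma - x_j$). This requires knowing that $x_{j+1}$, being essentially non-separating and disjoint from $x_j$, lies inside $U$ (or at worst is a pants curve cutting off a small piece of $U$), so that $U - x_{j+1}$ still contains curves whose $U$-projection to $c$ can be made arbitrarily large; the input here is \cref{connectToOz} together with the structure of essentially non-separating multicurves in \cref{essentially_non_separating}. One must also double-check the degenerate cases — when $x_j, x_{j+1}$ sit at the very start or end of the path (so $x_{j-1}$ or $x_{j+2}$ does not exist, in which case the endpoint $x'$ or $y'$ plays its role and one should recall $x', y' \in S_{r+1}^c$), and when there is a run of three or more consecutive vertices in $S_r^c$, handled by inducting on the number of vertices of the path lying in $S_r^c$.
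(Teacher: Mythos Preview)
Your approach is vastly more complicated than what the sublemma requires, and it contains a genuine gap.

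The paper's proof is one sentence: \cite[Proposition 5.4, part (2)]{Wright1} (the same result invoked in \cref{ENSunionConnected} to verify condition (2) of \cref{suffCondConnSpheres} in $\mathcal{C}_c\Sigma$) says that for any adjacent pair $x_i,x_{i+1}\in S_r^c$ there is a single vertex $x_i^1\in S_{r+1}^c$ adjacent to both. One simply inserts $x_i^1$ between $x_i$ and $x_{i+1}$, doing this for every bad edge. Nothing else is needed; the point of the sublemma is only to \emph{separate} consecutive $S_r^c$ vertices, not to remove them.

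By contrast, you are trying to excise both $x_j$ and $x_{j+1}$ entirely and reroute through $S_{r+1}$. This is essentially the content of the \emph{main theorem} itself (handled after the sublemma using \cref{connectToOz} and \cref{connectTwoPointsInOz} around a \emph{single} center $x_i$), so you are duplicating the main argument inside a preliminary step---and in a harder form, since you must work around two centers $x_j,x_{j+1}$ simultaneously.

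The gap is exactly the ``bridging'' step you flag. You need some $e''\in\mathcal{O}(x_j)$ that is also disjoint from $x_{j+1}$. Neither \cref{connectToOz} nor \cref{connectTwoPointsInOz} produces such a curve: the former connects a given $x\in S_1(z)\cap S_{r+1}^c$ to \emph{some} element of $\mathcal{O}(z)$ with no control on disjointness from a third curve, and the latter only connects two points already in $\mathcal{O}(z)$. Establishing existence of $e''$ would require an additional argument about curves in the non-pants component $U$ of $\Sigma-x_j$ that avoid $x_{j+1}$ yet have large $d_U(\cdot,c)$, using the essentially non-separating hypothesis on $x_j\cup x_{j+1}$; this is not available from the lemmas you cite. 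Your handling of runs of three or more consecutive $S_r^c$ vertices, and of the endpoint cases, is also only sketched. All of this is avoided by the paper's one-line insertion argument.
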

\begin{proof}
        This follows from \cite[Lemma 5.4, part (2)]{Wright1} that for each $x_i,x_{i+1}\in S_r^c$, there exists a path $x_i=x_i^0,x_i^1,x_i^2=x_{i+1}$ such that $x_i^1\in S_{r+1}^c.$
\end{proof}

By \cref{noBadEdgesSublemma}, for each vertex $x_i$ in the path from $x'$ to $y',$ if $x_i\in S_r^c,$ then both $x_{i-1}$ and $x_{i+1}$ must be in $S_{r+1}^c.$ In particular, since $x_{i-1},x_i,x_{i+1}$ is a path, we have $x_{i-1},x_{i+1}\in S_1(x_i)\cap S_{r+1}^c.$ 

Now applying \cref{connectToOz}, to $x_i,$ there exists $x_{i-1}'$ and $x_{i+1}'$ in $\mathcal{O}(x_i)$ which can be connected to $x_{i-1}$ and $x_{i+1}$ respectively with paths contained in $S_1(x_i)\cap S_{r+1}^c$ such that $d_U(x_{i-1}',c)\gg M$ and $d_U(x_{i+1}',c) \gg M.$ 

Applying \cref{connectTwoPointsInOz}, we can connect $x_{i-1}'$ and $x_{i+1}'$ by a path entirely in $S_{r+1}.$ Thus, for consecutive vertices $x_{i-1},x_i,x_{i+1}$ in the path from $x'$ to $y'$ where $x_{i-1},x_{i+1}\in S_{r+1}^c$ and $x_i\in S_r^c$, we can remove $x_i$ and connect $x_{i-1}$ to $x_{i+1}$ by a path contained in $S_{r+1}.$ By \cref{noBadEdgesSublemma} no two consecutive vertices in the path were in $S_r^c,$ so this construction eliminates all vertices in $S_r$ and results in a path from $x'$ to $y'$ contained in $S_{r+1}$, as desired.
\end{proof}

\section{Structure of $S_2$ in Low Complexity}\label{sec:structure_of_S2}

The main aim of this section is to prove \cref{thm:S2_connected}. Throughout this section we will work with the low complexity surface $\Sigma = \Sigma_{0,5} $. During the proof, we will also show that in $\mathcal{C} \Sigma_{0,5} $, the sphere $S ^\prime _2 (c)$ has the structure of a $\mathbb{Z}$-bundle over $S_1 (c)$.

\subsection{Basic Definitions}   

We begin with two basic definitions. 

\begin{definition}\label{defn:fiber} 
Suppose $x \in S_1 (c)$. Consider $S_1 (x)$, which is a copy of the Farey graph, in which $c$ is a vertex. Let $E_{x}$ denote the subset of $S_1 (x)$ that has Farey distance 1 from $c$. In other words, $E_x$ consists of curves that are disjoint from $x$ and have intersection number 2 with $c$. 
\end{definition}

\begin{definition}\label{defn:projection} 
Let $v \in S_2 (c)$ be any vertex. Define $\beta (v)$ as the unique backtrack of $v$ in $S_1 (c)$. In other words, $\beta(v)$ is the unique vertex adjacent to both $v$ and $c$.
\end{definition}

\begin{proposition}\label{prop:chop_down}
Suppose $v \in S_2 (c)$. If $v$ is a non-isolated vertex in $S_2 (c)$, then $v \in E_{ \beta (v)}$.   
\end{proposition}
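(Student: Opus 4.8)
Let me set up the picture. We have $v \in S_2(c)$ non-isolated, so $v$ has a neighbor $w \in S_2(c)$ (it cannot have a neighbor in $S_1(c)$ other than $\beta(v)$ by the no-quadrilaterals fact, and $S_1(c)$ is totally disconnected, so "non-isolated" forces a neighbor in $S_2(c)$; note also $v$ cannot have two neighbors in $S_1(c)$ since that would create a quadrilateral $v, \beta_1, c, \beta_2$). Write $x = \beta(v)$, the unique backtrack. I want to show $v \in E_x$, i.e., $v$ is disjoint from $x$ and $i(v,c) = 2$. Disjointness of $v$ and $x$ is automatic since $x = \beta(v)$ is adjacent to $v$ in $\mathcal{C}\Sigma$. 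So the real content is: $i(v,c) = 2$, equivalently, $v$ has Farey distance exactly $1$ from $c$ inside the Farey graph $S_1(x)$.

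The plan is to suppose $v \notin E_x$ and derive that $v$ is isolated in $S_2(c)$. Since $v$ and $c$ both lie in $S_1(x) \cong$ Farey graph and $v \notin E_x$ means $d_{\text{Farey}}(v,c) \geq 2$, I can use the annular subsurface projection to the annulus around $x$: the Farey distance controls $d_x(v,c)$ (the two are coarsely the same, in fact for the Farey graph the annular projection distance grows with Farey distance). The idea: if $w \in S_2(c)$ is any neighbor of $v$, I want to show $w$ must also cut the annulus around $x$ with $d_x(w,c)$ large, hence by the Bounded Geodesic Image Theorem every geodesic from $w$ to $c$ passes through $x$, forcing $d(w,c) \geq d(x,c) + \ldots$; but I need to pin down that this contradicts $w \in S_2(c)$. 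More carefully: $d_x(v,c)$ bounded below means $d_x(v,c) \gg M$ when $v$ is "Farey-far" — but Farey distance $2$ only gives a bounded projection, so this direct approach needs the Farey distance to be large, not just $\geq 2$.

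So I would instead argue contrapositively and structurally, mirroring the pentagon technology of the section: if $v$ is non-isolated with neighbor $w\in S_2(c)$, then $i(v,w) = 2$ (no bigons, and the only intersection numbers that occur for adjacent-in-$S_2$ pairs giving a pentagon are $2$), and by \cref{completeAPentagon} applied with $a_1 = c$ wait — that needs $v,w \in S_r \cap S_1(c)$, which fails. The cleaner route: use that $v$ has a unique backtrack $x = \beta(v) \in S_1(c)$, and $v$ also has a neighbor $w \in S_2(c)$; consider $\beta(w) = x'$, also in $S_1(c)$. Then $x, v, w, x'$ is a path of length $3$ from $x$ to $x'$, and since $d(x,x') \leq 2$ (both in $S_1(c)$, Farey graph has diameter-two-ish behavior — actually any two vertices of the Farey graph are at distance $\le 2$? no) — hmm, $x,x'$ adjacent to $c$ so $d(x,x')\le 2$. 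By no-triangles and no-quadrilaterals, either $x = x'$ or $x,x'$ have a common neighbor. If $x = x'$ then $v, w \in S_1(x)$ are adjacent, both at Farey distance from $c$; adjacency in $S_1(x)$ means Farey distance $1$, and then one checks the Farey geometry forces $v$ or $w$ into $E_x$. The main obstacle is handling the case $x \neq x'$: here I'd want to produce a pentagon through $c$, $x$, $v$ (and symmetrically), and use the pentagon structure lemmas together with the no-small-cycles facts to force $v \in E_x$; concretely, I expect to show that a non-isolated $v$ sits on a $5$-cycle in $S_1(c) \cup S_2(c)$ of the form $(c\text{-related}\ldots)$ and read off $i(v,c) = 2$ from condition (2) in the definition of pentagon.

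The step I expect to be the real obstacle is precisely ruling out $v$ with $i(v,c) \geq 3$ (i.e., $d_{\text{Farey}}(v,c) \geq 2$ in $S_1(\beta(v))$) having any neighbor in $S_2(c)$: I would attack it by showing that any such neighbor $w$ has $d_{\beta(v)}(w,c)$ forced large — using that $w$ is disjoint from $v$, $v$ is Farey-far from $c$ in $S_1(\beta(v))$, and a separate lemma (which I'd need to prove) that in $\Sigma_{0,5}$ a curve disjoint from $v$ and not equal to $\beta(v)$ either projects close to $v$ in the $\beta(v)$-annulus or fails to cut that annulus — and then BGI gives $d(w,c) \ge r + \text{something} > 2$, contradiction. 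Getting that auxiliary projection lemma right, with honest constants, is where the work lies; everything else is bookkeeping with the no-triangle/no-quadrilateral facts already cited as \cite[Lemma 6.1]{Wright1}.
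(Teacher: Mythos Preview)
Your proposal does not complete the proof; it circles the key idea without landing on it. You correctly set up the neighbor $w\in S_2(c)$ and the backtracks $x=\beta(v)$, $x'=\beta(w)$, and you even note (in effect) that $x\neq x'$: if $x=x'$ then $x,v,w$ would be a triangle in $\mathcal{C}\Sigma_{0,5}$, which is impossible. What you miss is that the five vertices $c,\,x,\,v,\,w,\,x'$ already form a $5$-cycle (just close up your path $x,v,w,x'$ through $c$, using $x,x'\in S_1(c)$), and that every $5$-cycle in $\mathcal{C}\Sigma_{0,5}$ is a pentagon in the sense of the paper's definition; this is \cite[Theorem 3.1]{FiniteRigid}, the same source behind the pentagon lemmas you tried to invoke. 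The intersection condition $i(v,c)=2$ then drops out immediately from the definition of a pentagon, since $v$ and $c$ are non-adjacent on the $5$-cycle. That is the entire argument.

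By contrast, your main proposed attack via the Bounded Geodesic Image Theorem is the wrong tool here, and you essentially diagnose this yourself: Farey distance $2$ from $c$ in $S_1(x)$ gives only a bounded annular projection distance $d_x(v,c)$, nowhere near the BGI constant $M$, so no contradiction is available. Your fallback ``auxiliary projection lemma'' would have to rule out every curve $w$ disjoint from $v$ and at distance $2$ from $c$, uniformly for all $v$ with $i(v,c)\geq 4$; there is no reason to expect this to go through with effective constants, and in any case it is vastly harder than the one-line pentagon argument. Similarly, the attempt to apply \cref{completeAPentagon} fails for the reason you note (the hypotheses put $v,w$ in $S_1(c)$, not $S_2(c)$); the point is that you do not need to \emph{construct} a pentagon, because the $5$-cycle $c,x,v,w,x'$ already is one.
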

\begin{proof}
Since $v$ is nonisolated in $S_2 (c) $, $v$ is adjacent to some $w \in S_2 (c)$. Since $\mathcal{C} \Sigma_{0,5} $ has no triangles, $\beta (w) \neq \beta (v)$. So $c, \beta (v), v, w, \beta (w)$ is a cycle of length 5. Since all cycles of length 5 in the low complexity curve graph are pentagons (\cite[Theorem 3.1]{FiniteRigid}), $c, \beta (v), v, w, \beta (v)$ is a pentagon. Thus, $v$ is Farey adjacent to $c$ in $S_1 (\beta (v))$, proving that $v \in E_{ \beta (v)}$.  
\end{proof}

\begin{remark}\label{rem:composition_of_delta}

\cref{prop:chop_down} implies that $S ^\prime _2 (c) = \bigsqcup_{x \in S_1 (c) } {E_{x} }$.  So the map $\functionheader{\beta \vert _{S ^\prime _2 (c)} }{S ^\prime _2 (c)}{S_1 (c) }$ gives a fiber bundle. We will refer to the map $\beta \vert _{S ^\prime _2 (c)}$ as just $\beta$.   
\end{remark} 

We now give the above fiber bundle the additional structure of a $\ints$-bundle. We begin by recalling the notion of a half Dehn twist. 

\begin{notation}\label{Half_Dehn_twist} 
Let $v \in \mathcal{C} \Sigma_{0,5} $ be any vertex. Then we let $\tau_v$ denote the half (right) Dehn twist around $v$. Furthermore, we let $H_v$ denote the infinite cyclic group generated by $\tau_v$ (viewed as a subgroup of the mapping class group of $\Sigma_{0,5} $).  
\end{notation}

\begin{fact}\label{lem:Dtwist_transitive}
Suppose $x \in S_1 (c)$. Then $H_c$ acts on $E_x$ simply transitively. Indeed, the set of vertices adjacent to $x$, which includes $c$ and $E_x$, can be naturally identified with the Farrey graph, and $H_c$ acts simply transitively on the set of vertices adjacent to $c$ in this Farrey graph. 
\end{fact}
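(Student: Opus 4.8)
\textbf{The plan is} to reduce the statement to a fact about a vertex link in the Farey graph and then to a standard computation. First I would record the topology around $x$: every curve in $\Sigma_{0,5}$ separates the five punctures as $2+3$, so $x$ bounds a pair of pants $P_x$ carrying two punctures, and its complement is a subsurface $Q_x\cong\Sigma_{0,4}$ carrying the other three. Since a pair of pants contains no essential non-peripheral curve, every curve disjoint from $x$ lies in $Q_x$; hence $S_1(x)=\mathcal{C}Q_x$, which is the Farey graph, and $c$ (being disjoint from $x$) is one of its vertices. By \cref{defn:fiber}, $E_x$ is precisely the set of Farey neighbors of $c$ in $\mathcal{C}Q_x$, i.e.\ the link of the vertex $c$. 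So the ``indeed'' clause is just the definition of $E_x$, and what remains is to prove that $H_c=\langle\tau_c\rangle$ acts simply transitively on this link.

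\textbf{Next I would check} that $\tau_c$ genuinely acts on $\mathcal{C}Q_x$ fixing $c$. The pair of pants $P_c\subset\Sigma_{0,5}$ bounded by $c$ is disjoint from $x$ (as $x$ is disjoint from $\partial P_c=c$ and is essential, hence not contained in $P_c$), so by connectedness $P_c$ lies on one side of $x$; it cannot sit inside $P_x$ (that would force $c$ isotopic to $x$), so $P_c\subseteq Q_x$. Thus $\tau_c$, supported in a regular neighborhood of $P_c$, fixes $x$, and on $Q_x=\Sigma_{0,4}$ it restricts to the half-twist about the curve $c\subset Q_x$; in particular it fixes the vertex $c$ of $\mathcal{C}Q_x$ and preserves $E_x$.

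\textbf{The crux} is the Farey-graph fact: the half-twist $\tau_c$ about a curve $c$ in $\Sigma_{0,4}$ acts on the link $L$ of the vertex $c$ (a bi-infinite path) as translation by $\pm1$. For freeness: a vertex $a\in L$ has $i(a,c)=2$, so $a$ ``crosses'' $c$ and determines one of the two re-pairings of the four marked points obtained by breaking the pair enclosed by $P_c$; since $\tau_c$ swaps the two marked points inside $P_c$, it sends each such re-pairing to the other one, so $\tau_c(a)\neq a$ for every $a\in L$. Because $\tau_c^2=T_c$ is an ordinary Dehn twist, $T_c^n(a)$ has intersection number with a fixed $a\in L$ tending to infinity, so $\tau_c$ has infinite order on $L$; a fixed-point-free graph automorphism of a line of infinite order is a nonzero translation. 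That the translation length equals $1$ amounts to $i(a,\tau_c(a))=2$ for $a\in L$, which I would extract from the classical identification $\mathcal{C}\Sigma_{0,4}\cong\mathcal{C}\Sigma_{1,1}$ coming from the hyperelliptic double cover $\Sigma_{1,1}\to\Sigma_{0,4}$: under it $c$ lifts to a curve $\tilde c$, $\tau_c$ lifts to the Dehn twist $T_{\tilde c}$, and $\mathrm{MCG}(\Sigma_{1,1})=\mathrm{SL}_2(\ints)$ acts on the Farey graph so that $T_{\tilde c}$ is parabolic and translates the link of $\tilde c$ by $1$ (equivalently, one verifies by an explicit picture that a half-twist carries one Farey neighbor of $c$ to an adjacent Farey neighbor). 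Transporting this back through $\mathcal{C}Q_x\cong S_1(x)$ shows $\tau_c$ acts on $E_x$ as translation by $\pm1$, hence simply transitively, completing the proof.

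\textbf{The main obstacle} is exactly this last normalization: ruling out translation by a larger odd integer, i.e.\ establishing $i(a,\tau_c(a))=2$. The topological reductions, the freeness of the action, and the line structure of $L$ are routine; the real content is pinning the translation length to $1$, for which I would either invoke the hyperelliptic-cover identification with $\mathrm{SL}_2(\ints)$ as above or carry out a direct arcs-and-curves intersection count on $\Sigma_{0,4}$.
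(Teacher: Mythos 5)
Your proposal is correct and follows the same route the paper gestures at in its ``Indeed'' clause: identify $S_1(x)$ with the Farey graph of the four-holed sphere $Q_x$, identify $E_x$ with the link of the vertex $c$, and then invoke the standard fact that the half-twist $\tau_c$ acts as a translation by $\pm 1$ on that link. The only substantive content you add beyond the paper's one-sentence justification is the verification of the translation length, which you do via the hyperelliptic-cover normalization; an equally quick alternative, in case you want to sidestep the Birman--Hilden bookkeeping, is to note $i(a,T_c(a)) = i(a,c)^2 = 4$ for $a\in E_x$ and, since $i(\cdot,\cdot) = 2\cdot(\text{Farey distance of the link positions})$ along the link line, this forces $T_c=\tau_c^2$ to translate by $2$ and hence $\tau_c$ by $1$.
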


This fact implies that the $H_c$-action makes the $\functionheader{\beta}{S ^\prime _2 (c)}{S_1 (c) }$ into a $\ints$-bundle, as we now make explicit.

\begin{remark}\label{rem:explicit_id} 
For all $y \in S_1 (c)$, we fix for the rest of this section some arbitrary $\overline{y} \in E_{y}$. Then there is an explicit bijection from $\ints$ to $E_y$ given by $n \mapsto \tau_c ^{n} (\overline{y})$. Let $\zeta_y$ denote the inverse of this bijection (so $\zeta_y$ maps $E_y$ to $\ints$). 
\end{remark}

\subsection{Perfect pairing between some of the fibers}
In this subsection, we show that if $x_1,x_2 \in S_1 (c) $ and $x_1,x_2$ are Farey connected, then $E_{x_1}, E_{x_2}$ have a ``perfect pairing," which we will make precise below. We first introduce a piece of notation. 

\begin{definition}
    Suppose $x_1,x_2$ are in $\in S_1 (c) $ and $x_1,x_2$ are Farey connected. Then let $\mathcal{E} (x_1, x_2)$ denote the set of all edges in $\mathcal{C} \Sigma_{0,5} $ with one vertex in $E_{x_1}$ and another vertex in $E_{x_2}$. 
\end{definition}

The following proposition explains how $\mathcal{E} (x_1,x_2)$ gives a ``perfect pairing" between $E_{x_1}$ and $E_{x_2}$. 

\begin{proposition}\label{prop:perfect_matching}
Suppose $x_1,x_2 \in S_1 (c)$ and $x_1$ is Farey connected to $x_2$. Then there exists a bijection $\functionheader{\psi}{E_{x_1}}{E_{ x_2}}$ such that 
\begin{equation}
    \mathcal{E} (x_1,x_2) = \setbuilder{\enum{v,\psi (v)}}{v \in E_{x_1}}. 
\end{equation}
\end{proposition}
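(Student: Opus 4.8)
The plan is to exploit the pentagon structure that governs $5$-cycles in $\mathcal{C}\Sigma_{0,5}$ together with the $\ints$-bundle description from \cref{rem:composition_of_delta} and \cref{lem:Dtwist_transitive}. First I would set up the incidence picture: for $x_1,x_2 \in S_1(c)$ Farey connected, any edge $\{v_1,v_2\}\in\mathcal{E}(x_1,x_2)$ produces, together with $c$, a closed walk $c,x_1,v_1,v_2,x_2,c$; since $\mathcal{C}\Sigma_{0,5}$ has no triangles or quadrilaterals and every $5$-cycle is a pentagon (\cite[Theorem 3.1]{FiniteRigid}, as already invoked in the proof of \cref{prop:chop_down}), this walk is a genuine pentagon provided the five vertices are distinct, which they are because $v_1\in E_{x_1}\subseteq S_2(c)$, $v_2\in E_{x_2}\subseteq S_2(c)$, $x_1\neq x_2$, and $x_1,x_2\in S_1(c)$. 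So every edge of $\mathcal{E}(x_1,x_2)$ sits in a unique pentagon through $c,x_1,x_2$.

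Next I would prove the two ``functionality'' claims that make the pairing a bijection. (a) Given $v_1\in E_{x_1}$, there is \emph{at least one} $v_2\in E_{x_2}$ with $\{v_1,v_2\}\in\mathcal{E}(x_1,x_2)$: this should follow from a pentagon-completion lemma — $v_1$ is adjacent to $x_1$, $x_1$ is Farey adjacent to $x_2$ in $S_1(c)$ (equivalently $c,x_1,x_2$ is part of a Farey triangle, but really we just need $i(x_1,x_2)=2$), and one completes the chain $x_2,x_1,v_1$ to a pentagon $c,x_1,v_1,v_2,x_2$ using a completion result in the spirit of \cref{completeAPentagon} (with $c$ in the role of the center); the fifth vertex $v_2$ is then Farey adjacent to $c$ in $S_1(x_2)$, i.e. $v_2\in E_{x_2}$. (b) Given $v_1$, such a $v_2$ is \emph{unique}: if $\{v_1,v_2\}$ and $\{v_1,v_2'\}$ were both in $\mathcal{E}(x_1,x_2)$ with $v_2\neq v_2'$, then $v_2,v_1,v_2'$ are two distinct neighbors of $v_1$ in $E_{x_2}$, hence all adjacent to $x_2$; but $v_2,v_2'\in S_1(x_2)$ means $v_2,v_2'$ are non-adjacent (the Farey graph $S_1(x_2)$ has no... wait, Farey graph does have edges) — more carefully, $x_2,v_2,v_1,v_2'$ would be a $4$-cycle unless $v_2,v_2'$ are adjacent, and if $v_2\sim v_2'$ then $x_2,v_2,v_2'$ is a triangle; either way a contradiction. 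This gives a well-defined map $\psi\colon E_{x_1}\to E_{x_2}$, $v_1\mapsto v_2$, and by the symmetric argument (swapping $x_1\leftrightarrow x_2$) an inverse map $E_{x_2}\to E_{x_1}$; checking these are mutually inverse is immediate from uniqueness, so $\psi$ is a bijection. Finally the displayed equality $\mathcal{E}(x_1,x_2)=\{\{v,\psi(v)\}: v\in E_{x_1}\}$ is just a restatement: $\supseteq$ holds by construction of $\psi$, and $\subseteq$ holds because any edge $\{v_1,v_2\}\in\mathcal{E}(x_1,x_2)$ has $v_2=\psi(v_1)$ by uniqueness in (b).

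The main obstacle I anticipate is part (a), the existence of the partner $v_2$: one must show that the three-curve configuration $x_2, x_1, v_1$ (with the appropriate intersection numbers inherited from $v_1\in E_{x_1}$, $x_1\sim_{\mathrm{Farey}} x_2$) genuinely extends to a pentagon. The cleanest route is probably to use the rigidity result that all $5$-cycles are pentagons in reverse: build \emph{some} $5$-cycle $c,x_1,v_1,v_2,x_2$ by a connectivity/completion argument and then upgrade it to a pentagon automatically, rather than verifying intersection numbers by hand. Alternatively, one can phrase everything inside the Farey graph $S_1(x_1)$ (where $c$ and $v_1$ are vertices) and its interaction with $S_1(x_2)$ via the half-twist $\tau$-action of \cref{lem:Dtwist_transitive}, exhibiting $\psi$ concretely as a composition $\zeta_{x_2}^{-1}\circ(\text{translation})\circ\zeta_{x_1}$; this would simultaneously prove bijectivity and foreshadow the ``monodromy is translation by $1$'' statement. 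I would proceed with the pentagon/rigidity approach first since it reuses machinery (\cref{completeAPentagon}, \cite[Theorem 3.1]{FiniteRigid}) already set up in the paper, and only fall back on the explicit $\ints$-bundle computation if the completion lemma as stated does not apply verbatim with $c$ as the center.
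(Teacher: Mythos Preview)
Your uniqueness argument (b) is correct and is in fact cleaner than what the paper does at the corresponding step. The paper first produces one edge $\{s_1,s_2\}\in\mathcal{E}(x_1,x_2)$ via \cref{completeAPentagon}, sets $\Omega=\{\{\tau_c^n s_1,\tau_c^n s_2\}:n\in\ints\}$, and then argues $\mathcal{E}(x_1,x_2)\subseteq\Omega$ by a mapping-class-group computation: any edge of $\mathcal{E}(x_1,x_2)$ sits in a pentagon through $c,x_1,x_2$, the mapping class group acts transitively on pentagons, and the pointwise stabiliser of $\{c,x_1,x_2\}$ is $H_c$. Your no-triangle/no-quadrilateral argument replaces that last step by something completely elementary, and one could splice it into the paper's proof verbatim.

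The gap is in (a). \cref{completeAPentagon} lets you prescribe \emph{three} vertices of the pentagon, not four: feeding it $a_1=c$, $a_3=x_1$, $a_4=x_2$ gives back \emph{some} $s_1\in E_{x_1}$ and $s_2\in E_{x_2}$, but you have no control over which $s_1$ you get, so you cannot force $s_1=v_1$. Your ``cleanest route'' (build a $5$-cycle $c,x_1,v_1,v_2,x_2$ and upgrade) is circular, since constructing such a $5$-cycle already requires producing $v_2$ adjacent to both $v_1$ and $x_2$, which is exactly the existence statement you are after. The missing ingredient is precisely what the paper supplies: once you have the single edge $\{s_1,s_2\}$, the simply transitive $H_c$-action on $E_{x_1}$ (\cref{lem:Dtwist_transitive}) carries $s_1$ to your given $v_1$ by some $\tau_c^n$, and then $v_2:=\tau_c^n s_2$ is the desired partner. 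So your fallback plan---realising $\psi$ as $\zeta_{x_2}^{-1}\circ(\text{translation})\circ\zeta_{x_1}$ via the half-twist action---is not an alternative route but is essentially the paper's actual argument, and is what you need to close (a).
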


In other words, the proposition says that every vertex of $E_{x_1}$ is joined by an edge to a unique vertex of $E_{x_2}$, and vice versa. The bijection is such that for all $v \in E_{x_1} $, $\psi(v)$ is the unique element of $E_{x_2}$ joined to $v$ by an edge.

\begin{proof}
Since $x_1$ is Farey connected to $x_2$, by \cite[Lemma 6.6]{Wright1}, there exists $s_1, s_2 \in S ^\prime _2 (c)$ such that $c, x_1, s_1, s_2, x_2$ is a pentagon. By definition of a pentagon, $s_1 \in E_{ x_1} $ and $s_2 \in E_{ x_2}$. Applying all integer powers of the half twist $\tau_c$ to the edge $\enum{s_1,s_2}$, we get a collection of edges $$\setbuilder{\enum{\tau^{n}_c (s_1), \tau^{n}_c (s_2)}}{n \in \ints}.$$ Call the collection $\Omega$. 

By \cref{lem:Dtwist_transitive}, if $e_1,e_2 \in \Omega$, then $e_1, e_2$ share no vertices. Also by \cref{lem:Dtwist_transitive}, each vertex in $E_{x_1}$ is contained in an edge $\Omega$ and likewise each vertex in $E_{x_2}$ is contained in an edge $\Omega$. These two facts guarantee the existence of a bijection $\functionheader{\psi}{E_{x_1}}{E_{x_2}}$ such that $\Omega = \setbuilder{\enum{v, \psi (v)}}{v \in E_{x_1}}$.   

Now it remains to verify that $\Omega= \mathcal{E} (x_1, x_2)$. It is clear that $\Omega \subset \mathcal{E} (x_1, x_2)$. To prove the converse, first observe that any edge $e \in \mathcal{E} (x_1, x_2)$ forms a pentagon with the vertices $x_1, x_2, c$. We know that all the pentagons containing $x_1,x_2,c$ are obtained from our initial pentagon $\enum{c, x_1, s_1, s_2, x_2}$ by applying a power of $\tau_c$ (because given any two pentagons, there is a mapping class taking one to the other, and if this mapping class fixes $x_1, x_2, c$, it must be a power of $\tau_c$). Hence, $e$ is obtained by applying a power of $\tau_c$ to the edge $\enum{s_1, s_2}$, and so $e \in \mathcal{E} (x_1, x_2)$. This shows that $\mathcal{E} (x_1, x_2) \subset \Omega$, and hence proves the proposition. 
\end{proof}

This ``perfect pairing" between $E_{x_1}$ and $E_{x_2}$ (for all Farey connected $ x_1,x_2 \in S_1 (c) $) that we just found is compatible with the action of $H_c$ on $E_{x_1}$ and $E_{x_2}$. More precisely, we have the following.

\begin{corollary}\label{lem:equivariance} 
Suppose $x_1, x_2 \in S_{1}  (c)$ and $x_1,x_2$ are Farey connected. Let $\functionheader{\psi}{E_{x_1}}{E_{x_2}}$ constructed in \ref{prop:perfect_matching}. Then $H_c $ acts on $E_{x_1}, E_{x_2}$ $\psi$-equivariantly, i.e. for all $g \in H_c$ and all $v \in E_{x_1}$, we have 
\begin{equation}
\psi (gv) = g \psi (v). 
\end{equation}
\end{corollary}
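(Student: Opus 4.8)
The plan is to show that $\psi$ is completely determined by the $H_c$-orbit structure of the edge set constructed inside the proof of \cref{prop:perfect_matching}, and then to observe that equivariance is an immediate consequence of the group law in $H_c$. So the argument is almost entirely a matter of unwinding definitions.

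First I would recall the explicit description of $\psi$ coming from that proof. Fix the pentagon $c, x_1, s_1, s_2, x_2$ with $s_1 \in E_{x_1}$ and $s_2 \in E_{x_2}$, and let
\begin{equation}
\Omega = \setbuilder{\enum{\tau_c^{n}(s_1), \tau_c^{n}(s_2)}}{n \in \ints} = \setbuilder{\enum{g(s_1), g(s_2)}}{g \in H_c},
\end{equation}
so that $\psi$ is the unique bijection $E_{x_1} \to E_{x_2}$ with $\Omega = \setbuilder{\enum{v, \psi(v)}}{v \in E_{x_1}}$. By \cref{lem:Dtwist_transitive}, $H_c$ acts simply transitively on $E_{x_1}$ (and on $E_{x_2}$); in particular every $v \in E_{x_1}$ can be written uniquely as $v = g(s_1)$ with $g \in H_c$, and the unique edge of $\Omega$ containing $v$ is $\enum{g(s_1), g(s_2)}$. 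Hence $\psi(g(s_1)) = g(s_2)$ for all $g \in H_c$.

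Now, given $h \in H_c$ and $v \in E_{x_1}$, write $v = g(s_1)$ with $g \in H_c$. Then $hv = (hg)(s_1)$, so by the formula just derived $\psi(hv) = (hg)(s_2) = h\bigl(g(s_2)\bigr) = h\psi(v)$, which is exactly the asserted equivariance. (Implicit here is that $\tau_c$ preserves both $x_1$ and $x_2$, so that $H_c$ genuinely acts on $E_{x_1}$ and on $E_{x_2}$; this is already part of the content of \cref{lem:Dtwist_transitive}.)

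I do not expect a genuine obstacle in this argument: once the identity $\psi(g(s_1)) = g(s_2)$ is established, equivariance is just associativity of the action. The only point deserving a moment of care is verifying that the bijection produced in \cref{prop:perfect_matching} really is this orbit map — that is, that the edge $\enum{v,\psi(v)}$ is precisely the unique edge of $\Omega$ through $v$ — but that follows at once from simple transitivity, since $\Omega$ consists of pairwise disjoint edges, one through each vertex of $E_{x_1}$.
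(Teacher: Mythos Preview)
Your argument is correct and essentially identical to the paper's: both proofs use that $\Omega$ is the $H_c$-orbit of the edge $\enum{s_1,s_2}$ and hence $H_c$-invariant, together with simple transitivity, to conclude $\psi(\tau_c^m v) = \tau_c^m \psi(v)$. The paper phrases this as ``$\enum{v,\psi(v)}\in\Omega$ implies $\enum{\tau_c^m v,\tau_c^m\psi(v)}\in\Omega$,'' while you unwind it into the orbit-map formula $\psi(g(s_1))=g(s_2)$ and then invoke associativity --- but these are the same observation.
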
 
\begin{proof}
Define the set $\Omega$ as in the proof of \cref{prop:perfect_matching}. 

Suppose $g= \tau_c ^{m}$ and $v \in E_{x_1} $. We know that $\enum{v, \psi (v)} \in \Omega$. By construction of $\Omega$, we have $\enum{\tau_c ^{m} (v), \tau_c ^{m} (\psi (v))} \in \Omega$ as well. This implies that $\psi (\tau_c^{m} (v)) = \tau_c ^{m} (\psi (v))$. This proves the desired $\psi$-equivariance. 
\end{proof}

\subsection{Monodromy Number} 

In this subsection, we define the monodromy number associated to a Farey path in $S_1 (c)$. 

Suppose $x_1 , \ldots ,x_l$ all belong to $S_1 (c)$ and that they form a Farey path. Let $\psi_{i, i + 1}$, $\irange{1}{i}{l - 1}$,  be the bijections (between $E_{x_i}$ and $E_{ x_{i + 1}}$) obtained in \cref{prop:perfect_matching}. Choose any $v \in E_{x_1}$. By \cref{prop:perfect_matching}, we obtain a path in $S ^\prime  _2 (c) $ $$v , \psi_{ 12} (v),  \psi_{23} \psi_{ 12} (v), \ldots, \psi_{ (l - 1) l} \cdots \psi_{ 12} (v).$$ Using the identification of the two sets $E_{x_1 }, E_{x_l}$ with $\ints$ given by \cref{rem:explicit_id}, we compute an integer $\zeta_{x_l } (\psi_{ (l - 1) l} \cdots \psi_{ 12} (v)) - \zeta_{x_1} (v)$.     

\begin{proposition}\label{prop:mon_number_well_defined} 
For a fixed Farey path $\gamma$ as above, the number $\zeta_{x_l} (\psi_{ (l - 1) l} \cdots \psi_{ 12} (v)) - \zeta_{x_1} (v)$ is independent of the choice of $v \in E_{x_1}$.   
\end{proposition}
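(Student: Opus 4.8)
The plan is to reduce the statement to the $\psi$-equivariance established in \cref{lem:equivariance}. The key observation is that the composite bijection $\Psi := \psi_{(l-1)l} \circ \cdots \circ \psi_{12} : E_{x_1} \to E_{x_l}$ is $H_c$-equivariant, being a composition of $H_c$-equivariant maps. So for $g = \tau_c^m \in H_c$ and any $v \in E_{x_1}$, we have $\Psi(\tau_c^m v) = \tau_c^m \Psi(v)$.

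Next I would translate this equivariance statement through the identifications $\zeta_{x_1} : E_{x_1} \to \ints$ and $\zeta_{x_l} : E_{x_l} \to \ints$ of \cref{rem:explicit_id}. By construction, $\zeta_{x_1}$ conjugates the $\tau_c$-action on $E_{x_1}$ to the translation $n \mapsto n+1$ on $\ints$ (since $\tau_c^n(\overline{x_1}) \mapsto n$ and $\tau_c(\tau_c^n(\overline{x_1})) = \tau_c^{n+1}(\overline{x_1}) \mapsto n+1$), and likewise for $\zeta_{x_l}$. Define $f : \ints \to \ints$ by $f = \zeta_{x_l} \circ \Psi \circ \zeta_{x_1}^{-1}$. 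Then $H_c$-equivariance of $\Psi$ says exactly that $f(n+1) = f(n)+1$ for all $n \in \ints$; that is, $f$ commutes with the shift. Hence $f(n) - n = f(0) - 0$ for all $n$, i.e. $f(n) - n$ is a constant independent of $n$.

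Finally I would spell out that the quantity in the proposition is precisely $f(n) - n$ evaluated at $n = \zeta_{x_1}(v)$: indeed $\zeta_{x_l}(\Psi(v)) - \zeta_{x_1}(v) = f(\zeta_{x_1}(v)) - \zeta_{x_1}(v)$, which we have just shown equals $f(0)$, a value not depending on $v$. Since $v \mapsto \zeta_{x_1}(v)$ ranges over all of $\ints$ as $v$ ranges over $E_{x_1}$, and the expression depends on $v$ only through $\zeta_{x_1}(v)$, we conclude the number is independent of the choice of $v \in E_{x_1}$.

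I do not anticipate a serious obstacle here: the proposition is essentially a formal consequence of equivariance plus the fact that a self-map of $\ints$ commuting with translation by $1$ differs from the identity by a constant. The only point requiring a little care is bookkeeping — making sure the $H_c$-equivariance of each $\psi_{i,i+1}$ composes correctly (all with respect to the same group $H_c$ acting on each fiber) and that the sign/direction conventions in \cref{rem:explicit_id} are used consistently so that $\tau_c$ really does correspond to $+1$ on each $\ints$. The main "idea," such as it is, is just to recognize that the monodromy number is the translation constant of an equivariant bijection between two torsors under $H_c \cong \ints$.
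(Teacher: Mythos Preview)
Your proposal is correct and takes essentially the same approach as the paper: both arguments reduce the claim to the $H_c$-equivariance of the bijections $\psi_{i,i+1}$ established in \cref{lem:equivariance}. The paper's proof is terser---it checks the case $l=2$ directly from equivariance and then asserts the general case follows---while you spell out explicitly that the composite $\Psi$ is equivariant and hence conjugates to a translation on $\ints$; these are the same argument at different levels of detail.
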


\begin{remark}\label{rem:choice_dependence} 
    If $x_l \neq x_1$ (i.e. our Farey path is not a Farey cycle), then the number $\zeta_{x_l} (\psi_{ (l - 1) l} \cdots \psi_{ 12} (v)) - \zeta_{x_1} (v)$ \emph{does} depend on the choices of $\overline{x}_l \in E_{x_l}$ and $\overline{x}_1 \in E_{x_1}$ that we made in \cref{rem:explicit_id} when we defined the bijections $\zeta_l $ and $ \zeta_1$ . 

    However, in the case $x_l = x_1$, then changing our choice of $\bar{x}_1$ would change $\zeta_{x_1} (\psi_{ (l - 1) 1} \cdots \psi_{ 12} (v)) $ and $ \zeta_{x_1} (v)$ by the same integer. Hence the number $\zeta_{x_1} (\psi_{ (l - 1) 1} \cdots \psi_{ 12} (v)) - \zeta_{x_1} (v)$ is independent of the choice of $\overline{x}_1$ that we made in \cref{rem:explicit_id}.  
\end{remark}

\begin{proof}[Proof of \cref{prop:mon_number_well_defined}]
If the path $\gamma$ has length 1, i.e. $l=2$, then the proposition follows from equivariance (\cref{lem:equivariance}). The general case follows from the case $l=1$.
\end{proof}

\begin{definition}\label{Monodromy Number}

Suppose $\gamma = x_1 , \ldots ,x_l$ is a Farey path in $S_1 (c)$. We call the number $\zeta_{x_l } (\psi_{ (l - 1) 1} \cdots \psi_{ 12} (v)) - \zeta_{x_1} (v)$ for some choice of $v$ the ``monodromy number" associated to $\gamma$. \cref{prop:mon_number_well_defined} shows that the monodromy number is independent of the choice of $v$. When $\gamma$ is a Farey cycle, by \cref{rem:choice_dependence}, the monodromy number is also independent of the choices made in \cref{rem:explicit_id}. 
\end{definition}

\subsection{Monodromy Number for a Triangle}\label{subsec:monodromy_number_of_triangle} 

In this subsection, we explicitly construct a Farey triangle in $S_1 (c)$ and calculate its monodromy number. 

\begin{remark}\label{rem:conventions} 
    For the rest of \cref{sec:structure_of_S2}, we fix two conventions for how we will pictorially represent $\Sigma_{0,5} $ and curves on it. First, we will label the five punctures on $\Sigma_{0,5}$ with elements of the set $\enum{1,2,3,4,5}$, as shown in \cref{fig:fundamental_triangle} and \cref{fig:monodromy_number}. Second, in these figures, we will represent an element $v \in \mathcal{C} \Sigma_{0,5} $ by an arc such that $v$ is the boundary of an $\varepsilon$-neighborhood of the arc.
\end{remark}

\begin{cons}\label{cons:fundamental_triangle} 

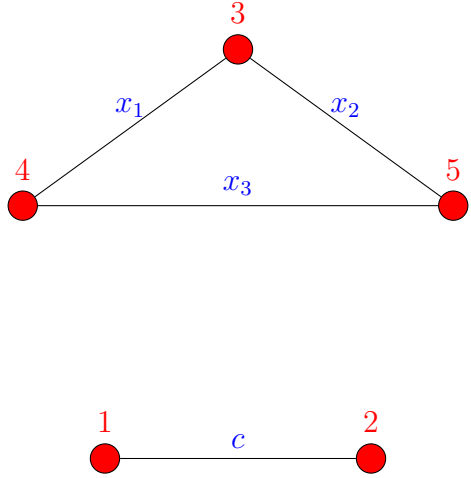
\begin{figure}
    \centering
    %\includegraphics[width=0.4\textwidth]{drawing1.png}
    % code to generate figure
    \begin{tikzpicture}
    \node[color=white,minimum size=6cm,draw,regular polygon,regular polygon sides=5] (p) {};
    \node[circle,draw,
        label=above:{$\textcolor{red}{3}$},
        fill=red] (3) at (p.corner 1) {};    
    \node[circle,draw,
        label=above:{$\textcolor{red}{4}$},
        fill=red] (4) at (p.corner 2) {};   
    \node[circle,draw,
        label=above:{$\textcolor{red}{1}$},
        fill=red] (1) at (p.corner 3) {}; 
    \node[circle,draw,
        label=above:{$\textcolor{red}{2}$},
        fill=red] (2) at (p.corner 4) {};
    \node[circle,draw,
        label=above:{$\textcolor{red}{5}$},
        fill=red] (5) at (p.corner 5) {};
    \path [-] (5) edge node[above] {$\textcolor{blue}{x_2}$} (3);
    \path [-] (3) edge node[above] {$\textcolor{blue}{x_1}$} (4);
    \path [-] (4) edge node[above] {$\textcolor{blue}{x_3}$} (5);
    \path [-] (2) edge node[above] {$\textcolor{blue}{c}$} (1);
    \end{tikzpicture}
    
    \caption{Fundamental Triangle}
    \label{fig:fundamental_triangle} 
\end{figure}

Let $c$ be the loop around punctures $1,2$ shown in \cref{fig:fundamental_triangle} (note that \cref{rem:conventions} is now in effect). We now construct a Farey cycle in $S_1 (c)$. Let $x_1$ (resp. $x_2$, $x_3$) be the loops around punctures $3,4$ (resp. punctures $3,5$, punctures $4,5$) also shown in \cref{fig:fundamental_triangle}. It is clear that $x_1 , x_2 , x_3 , x_1$ is a Farey cycle of length 3 in $S_1 (c) $. For the rest of \cref{subsec:monodromy_number_of_triangle}, we call this Farey cycle the ``fundamental triangle" and denote it by $\mathcal{T}$.    
\end{cons}

\begin{proposition}\label{prop:mon_number_of_triangle} 
The monodromy number of $\mathcal{T}$ is 1.  
\end{proposition}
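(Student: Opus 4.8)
The plan is to reduce \cref{prop:mon_number_of_triangle} to tracing a single vertex around the triangle $\mathcal{T}$, and then to carry out that trace explicitly in $\mathcal{C}\Sigma_{0,5}$. Write $\psi_{12},\psi_{23},\psi_{31}$ for the three ``perfect pairings'' of \cref{prop:perfect_matching} attached to the Farey edges of $\mathcal{T}$, and set $\Psi := \psi_{31}\circ\psi_{23}\circ\psi_{12}\colon E_{x_1}\to E_{x_1}$. Combining \cref{lem:equivariance} and \cref{lem:Dtwist_transitive} (exactly as in the proof of \cref{prop:mon_number_well_defined}), $\Psi$ agrees on $E_{x_1}$ with $\tau_c^{\,k}$ for a single integer $k$, and $k$ is by definition the monodromy number of $\mathcal{T}$. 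So it suffices to fix one convenient $v_0\in E_{x_1}$, compute $v_3:=\Psi(v_0)$, and check that $v_3=\tau_c(v_0)$; the conclusion $k=1$ then follows from \cref{rem:explicit_id}, since $\zeta_{x_1}(\tau_c(v_0))=\zeta_{x_1}(v_0)+1$.

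For the trace I would work only with the ``simplest'' curves on $\Sigma_{0,5}$, writing $\{i,j\}$ for the curve enclosing exactly punctures $i$ and $j$; with the conventions of \cref{cons:fundamental_triangle} we have $c=\{1,2\}$, $x_1=\{3,4\}$, $x_2=\{3,5\}$, $x_3=\{4,5\}$. Take $v_0=\{1,5\}$, which lies in $E_{x_1}$. Recall from the proof of \cref{prop:perfect_matching} that $\psi_{12}(v_0)$ is the unique vertex of $E_{x_2}$ adjacent to $v_0$, and that it fits together with $c,x_1,x_2$ into a pentagon; one checks directly that $\{1,2\},\{3,4\},\{1,5\},\{2,4\},\{3,5\}$ is such a $5$-cycle, so $\psi_{12}(v_0)=\{2,4\}$. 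Iterating, $\psi_{23}(\{2,4\})=\{1,3\}$ via the $5$-cycle $\{1,2\},\{3,5\},\{2,4\},\{1,3\},\{4,5\}$, and $\psi_{31}(\{1,3\})=\{2,5\}$ via the $5$-cycle $\{1,2\},\{4,5\},\{1,3\},\{2,5\},\{3,4\}$. Hence $v_3=\Psi(v_0)=\{2,5\}$, and this chain of four pentagons is what \cref{fig:monodromy_number} displays.

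It remains to identify $\{2,5\}$ with $\tau_c(\{1,5\})$, and this is where the main work lies. Inside the four-holed-sphere component of $\Sigma_{0,5}-x_1$, whose boundary/puncture components are the punctures $1,2,5$ together with a copy of $x_1$, the three curves $c=\{1,2\}$, $v_0=\{1,5\}$, $v_3=\{2,5\}$ are precisely the three simplest curves, hence a Farey triangle; in particular $v_0$ and $v_3$ are Farey-adjacent and both Farey-adjacent to $c$, so they are consecutive vertices of the link-line of $c$ in this Farey graph, which is exactly $E_{x_1}$ with its simply transitive $H_c$-action. Therefore $v_3=\tau_c^{\pm1}(v_0)$, so the monodromy number is $\pm1$. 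Pinning down the sign---checking, with the orientation conventions of \cref{Half_Dehn_twist} and \cref{rem:explicit_id}, that the half-twist sends $v_0$ to $v_3$ rather than to $\tau_c^{-1}(v_0)$---is the only genuinely delicate step, and I expect it to be the main obstacle; it is settled by applying $\tau_c$ to the arc representing $v_0$ in \cref{fig:monodromy_number} and verifying that the result is isotopic to the arc representing $v_3$ (or by a direct intersection-number computation). I would also remark that the resulting identity $\psi_{31}\psi_{23}\psi_{12}=\tau_c$ on $E_{x_1}$ is a form of the lantern relation on the five-punctured sphere, and that the $3$-fold symmetry $\phi$ of $\Sigma_{0,5}$ inducing the permutation $(3\,5\,4)$ on the punctures (which fixes $c$ and cycles $x_1\mapsto x_2\mapsto x_3$, hence conjugates $\psi_{12}$ into $\psi_{23}$into $\psi_{31}$) streamlines the bookkeeping, since it recovers the whole chain of pentagons from the first one.
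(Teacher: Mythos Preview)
Your overall strategy is exactly the paper's: set $\Psi=\psi_{31}\psi_{23}\psi_{12}$, trace a single $v_0\in E_{x_1}$ around $\mathcal{T}$, and identify $\Psi(v_0)$ with $\tau_c(v_0)$. The problem is in the execution of the trace.

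The notation ``$\{i,j\}$ for the curve enclosing exactly punctures $i$ and $j$'' is not well defined: there are infinitely many isotopy classes with that property, and no choice of ten representatives can make $\{i,j\}$ disjoint from $\{k,l\}$ precisely when $\{i,j\}\cap\{k,l\}=\emptyset$. Concretely, in the arrangement of \cref{cons:fundamental_triangle} (cyclic order $3,4,1,2,5$), the straight-chord curves $\{1,5\}$ and $\{2,4\}$ intersect (their endpoints alternate), so your first pentagon $\{1,2\},\{3,4\},\{1,5\},\{2,4\},\{3,5\}$ is not a $5$-cycle in $\mathcal{C}\Sigma_{0,5}$, and $\{2,4\}\neq\psi_{12}(\{1,5\})$. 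The same obstruction recurs: with this arrangement, $\{2,4\}$ and $\{1,3\}$ also intersect. In fact the paper's own trace (starting from $v=\{2,5\}$, not $\{1,5\}$) is forced to leave the ``simple'' curves at the second step---this is exactly why $\psi_{23}\psi_{12}(v)$ is drawn as a bent arc in \cref{fig:monodromy_number}. A useful sanity check: your chain gives $\Psi(\{1,5\})=\{2,5\}$, while the paper's figure gives $\Psi(\{2,5\})=\{1,5\}$; both cannot hold, since $\Psi=\tau_c^{k}$ for a single $k$ and $\tau_c$ has infinite order on $E_{x_1}$.

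Because the identification $v_3=\{2,5\}$ is what feeds your Farey-triangle argument that the monodromy is $\pm1$, that conclusion is also unjustified as written. (The lantern-relation and $(3\,5\,4)$-symmetry remarks are good intuition, but note that a mapping class inducing $(3\,5\,4)$ and fixing $c$ does not send ``simple'' curves to ``simple'' curves in any model, so it does not shortcut the bookkeeping in the way you suggest.) The fix is to do what the paper does: draw the curves and follow one honest trace through \cref{fig:monodromy_number}, accepting that one intermediate curve is not of the naive $\{i,j\}$ form.
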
 
\begin{proof}
Let $\psi_{ 12}$ be the bijection between $E_{x_1}$ and $E_{ x_2}$ constructed in \cref{prop:perfect_matching}. Define $\psi_{23}$ and $\psi_{31}$ similarly. 

Let $v$ be the loop around punctures $2,5$ as shown in the \cref{fig:monodromy_number}. Then the loops $\psi_{12} (v), \psi_{23} \psi_{12} (v), \psi_{31} \psi_{23} \psi_{12} (v) $ must be the ones shown in the same figure. We see that $\psi_{31} \psi_{23} \psi_{12} (v) = \tau_c (v)$. As a result, we have 
\begin{equation}
\zeta_{x_1} (\psi_{31} \psi_{23} \psi_{12} (v))-  \zeta_{x_1} (v) = 1. \qedhere 
\end{equation}
\end{proof}

\begin{figure}[h!]
    \centering
    %\includegraphics[width=0.5\textwidth]{drawing2.png.png}
    % code to generate figure:
    \begin{tikzpicture}
    \node[color=white,minimum size=6cm,draw,regular polygon,regular polygon sides=5] (p) {};
    \node[circle,draw,
        label=above:{$\textcolor{red}{3}$},
        fill=red] (3) at (p.corner 1) {};    
    \node[circle,draw,
        label=above:{$\textcolor{red}{4}$},
        fill=red] (4) at (p.corner 2) {};   
    \node[circle,draw,
        label=above:{$\textcolor{red}{1}$},
        fill=red] (1) at (p.corner 3) {}; 
    \node[circle,draw,
        label=above:{$\textcolor{red}{2}$},
        fill=red] (2) at (p.corner 4) {};
    \node[circle,draw,
        label=above:{$\textcolor{red}{5}$},
        fill=red] (5) at (p.corner 5) {};
    \path [-] (5) edge node[above] {$\textcolor{blue}{x_2}$} (3);
    \path [-] (3) edge node[above] {$\textcolor{blue}{x_1}$} (4);
    \path [-] (4) edge node[above] {$\textcolor{blue}{x_3}$} (5);
    \path [-] (2) edge node[above] {$\textcolor{blue}{c}$} (1);

    % define green color
    \definecolor{darkGreen}{RGB}{0, 130, 0}
    \path [-] (5) edge[yellow, line width=2pt] node[right] {$\textcolor{darkGreen}{v}$} (2);
    \path [-] (4) edge[yellow, line width=2pt] node[left] {$\textcolor{darkGreen}{\psi_{12}(v)}$} (1);
    \path [-] (1) edge[yellow, line width=2pt] node[above] {$\textcolor{darkGreen}{\psi_{31}\psi_{23}\psi_{12}(v)}$} (5);
    \path [-] (3) edge[yellow, line width=2pt, out=-63, in=-20, out looseness=1.9, in looseness=1.5] [bend left=99] node[right] {$\textcolor{darkGreen}{\psi_{23}\psi_{12}(v)}$} (2);
    \end{tikzpicture}
    \caption{Monodromy Number of the Fundamental Triangle} 
    \label{fig:monodromy_number} 
\end{figure}
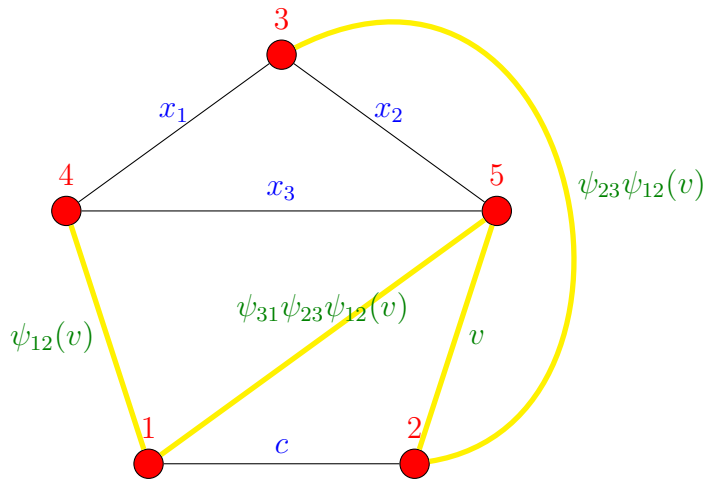

\begin{corollary}\label{cor:everything_to_one} 

Suppose $v,w \in E_{x_1}$, where $x_1$ is still the vertex defined in \cref{cons:fundamental_triangle}. Then $v$ can be connected to $w$ by a path in $S ^\prime _2 (c)$.  
\end{corollary}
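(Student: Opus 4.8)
The plan is to combine the monodromy computation for the fundamental triangle with the simple transitivity of $H_c$. First I would invoke \cref{lem:Dtwist_transitive} to write $w = \tau_c^{n}(v)$ for a unique integer $n$; it then suffices to connect $v$ to $\tau_c^{n}(v)$ by a path in $S'_2(c)$.

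The core step is to show that for every integer $k$ there is a path in $S'_2(c)$ from $\tau_c^{k}(v)$ to $\tau_c^{k+1}(v)$. Applying \cref{prop:perfect_matching} to the three Farey edges of $\mathcal{T} = x_1, x_2, x_3, x_1$ produces, for any $u \in E_{x_1}$, the sequence $u, \psi_{12}(u), \psi_{23}\psi_{12}(u), \psi_{31}\psi_{23}\psi_{12}(u)$; each of its vertices lies in one of $E_{x_1}, E_{x_2}, E_{x_3} \subseteq S'_2(c)$ by \cref{rem:composition_of_delta}, and consecutive vertices are joined by an edge by the defining property of the $\psi$'s, so this is genuinely a path in $S'_2(c)$. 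Taking $u = \tau_c^{k}(v)$ and using that the monodromy number of $\mathcal{T}$ is $1$ independently of the starting vertex (\cref{prop:mon_number_well_defined} together with \cref{prop:mon_number_of_triangle}), the endpoint $\psi_{31}\psi_{23}\psi_{12}(\tau_c^{k}(v))$ is exactly $\tau_c^{k+1}(v)$.

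Finally I would concatenate these building blocks. If $n \geq 0$, string together the paths from $\tau_c^{k}(v)$ to $\tau_c^{k+1}(v)$ for $k = 0, 1, \ldots, n-1$; if $n < 0$, reverse each such path and string together the ones for $k = n, n+1, \ldots, -1$. Either way this yields a path in $S'_2(c)$ from $v$ to $\tau_c^{n}(v) = w$, which is what we want.

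I do not expect a substantial obstacle: the statement is essentially a formal consequence of \cref{prop:perfect_matching}, \cref{lem:Dtwist_transitive}, and the computation \cref{prop:mon_number_of_triangle}. The only points needing care are that traversing $\mathcal{T}$ advances the $\ints$-coordinate by exactly $1$ regardless of the starting fiber point (guaranteed precisely by \cref{prop:mon_number_well_defined}), and the orientation bookkeeping that lets one also realize negative powers of $\tau_c$ by running the triangle in reverse.
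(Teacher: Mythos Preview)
Your proposal is correct and follows essentially the same approach as the paper's proof: the paper also reduces to the case of adjacent $\zeta_{x_1}$-coordinates (equivalently, $w=\tau_c(v)$) using \cref{prop:mon_number_of_triangle}, and then says this ``clearly implies the general case'' by concatenation. Your write-up simply makes the concatenation and the handling of negative $n$ explicit, and phrases the fiber coordinate via $\tau_c^{n}(v)$ rather than $\zeta_{x_1}$, which is equivalent.
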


\begin{proof}
We assume without loss of generality that $\zeta_{x_1}(w) - \zeta_{x_1} (v) = a > 0$. By \cref{prop:mon_number_of_triangle}, if $a=1$, then $v$ can be connected to $w$ by a path in $S_2 ^\prime  (c) $. The case of $a=1$ clearly implies the general case. 
\end{proof}

\subsection{Proving \cref{thm:S2_connected}} 

\cref{thm:S2_connected} now follows easily from \cref{prop:perfect_matching} and \cref{cor:everything_to_one}. 
\begin{proof}[Proof of \cref{thm:S2_connected}]
Fix some $v \in E_{x_1}$. Let $z \in S_1 (c) $ and $s \in E_{ z}$. It suffices to find a path in $S_2 ^\prime  (c) $ between $v$ and $s$. 

We first choose a Farey path $z=z_0, z_1 , \ldots , z_l = x_1$ contained in $S_1 (c) $. By applying \cref{prop:perfect_matching} $l$ times, we see $s$ is connected to some $s ^\prime \in E_{x_1}$ by some path in $S ^\prime _2 (c)$. By \cref{cor:everything_to_one}, $s ^\prime $ is connected to $v$ by some path in $S ^\prime _2 (c)$. This proves the theorem. 
\end{proof}
        
\bibliographystyle{amsalpha}
\bibliography{main}

\end{document}